\newcommand{\widebar}{\overline}
\newcommand{\diag}{\mathrm{diag}}
\newcommand{\mbar}{\mathrm{bar}}
\newcommand{\ev}{{\rm ev}}
\newcommand{\ca}{\mathfrak{C}}
\newcommand{\com}{\mathfrak{K}}
\newcommand{\fk}{\com}
\newcommand{\A}{\mathbb{A}}
\newcommand{\fD}{\mathfrak{D}}
\newcommand{\comment}[1]{}
\newcommand{\B}{\mathcal{B}}
\newcommand{\cB}{\B}
\renewcommand{\I}{\mathcal{I}}
\newcommand{\J}{\mathcal{J}}
\newcommand{\cK}{\mathcal{K}}
\newcommand{\ba}{\mathfrak{B}}
\newcommand{\cC}{\mathcal{C}}
\newcommand{\F}{\mathcal{F}}
\newcommand{\cF}{\F}
\newcommand{\M}{\mathcal{M}}
\newcommand{\cL}{\mathcal{L}}
\newcommand{\cT}{\mathcal{T}}
\def\idem{\mathrm{Idem}}
\def\GL{\mathrm{GL}}
\renewcommand{\inf}{\rm inf}
\newcommand{\dif}{\rm dif}
\newcommand{\nil}{\rm nil}
\renewcommand{\top}{\rm top}
\newcommand{\sm}{\rm sm}
\newcommand{\iso}{\overset{\cong}{\to}}
\newcommand{\weq}{\overset{\sim}{\to}}
\newcommand{\fib}{\twoheadrightarrow}
\newcommand{\DA}{\Delta}
\newcommand{\DD}{\Delta^{\dif}}
\newcommand{\triqui}{\triangleleft}
\newcommand{\ass}{\mathfrak{Ass}}
\newcommand{\ab}{\mathfrak{Ab}}
\newcommand{\Grp}{\mathfrak{Grp}}
\newcommand{\fC}{\mathfrak{C}}
\newcommand{\lc}{{\rm lc}}
\newcommand{\ho}{\mathrm{Ho}}
\newcommand{\topo}{\mathrm{Top}}
\newcommand{\joto}{\ho\topo}
\newcommand{\tor}{\mathrm{Tor}}
\newcommand{\coker}{\mathrm{coker}}
\newcommand*{\colim}{\mathop{\mathrm{colim}}}
\newcommand{\coeq}{\mathrm{coeq}}
\newcommand{\hofi}{\mathop{\mathrm{hofiber}}}
\newcommand{\ima}{\mathrm{Im}}
\newcommand{\hotimes}{\hat{\otimes}}
\newcommand{\sotimes}{\overset{\sim}\otimes}
\newcommand{\End}{\mathrm{End}}
\newcommand{\N}{\mathbb{N}}
\newcommand{\Z}{\mathbb{Z}}
\newcommand{\Q}{\mathbb{Q}}
\newcommand{\R}{\mathbb{R}}
\newcommand{\C}{\mathbb{C}}
\newcommand{\K}{\mathbb{K}}
\newcommand{\KH}{\mathbb{KH}}
\newcommand{\KD}{\mathbb{KD}}
\newcommand{\HN}{\mathbb{HN}}
\newcommand{\HP}{\mathbb{HP}}
\newcommand{\HC}{\mathbb{HC}}
\newcommand{\sn}{\par\smallskip\noindent}
\newcommand{\mn}{\par\medskip\noindent}
\newcommand{\bn}{\par\bigskip\noindent}
\spnewtheorem{them}{Theorem}[subsection]{\bf}{\it}
\spnewtheorem{properties}[them]{Properties}{\bf}{\it}
\spnewtheorem*{notation}{Notation}{\it}{\rm}
\spnewtheorem{propo}[them]{Propostion}{\bf}{\it}
\spnewtheorem{prop}{Propostion}[section]{\bf}{\it}
\spnewtheorem{lem}[them]{Lemma}{\bf}{\it}
\spnewtheorem{coro}[them]{Corollary}{\bf}{\it}
\spnewtheorem{exe}[them]{Exercise}{\it}{\rm}
\spnewtheorem{exa}[them]{Example}{\bf}{\rm}
\spnewtheorem{examples}[them]{Examples}{\bf}{\rm}
\spnewtheorem{exam}[prop]{Example}{\bf}{\rm}
\spnewtheorem{defi}[them]{Definition}{\bf}{\it}
\spnewtheorem{rem}[them]{Remark}{\it}{\rm}
\begin{document}
\title*{Algebraic v. topological $K$-theory: a friendly match}
\author{Guillermo Corti\~nas}
\institute{Dep. Matem\'atica\\ Ciudad Universitaria Pab 1\\ (1428) Buenos Aires, Argentina\\ \texttt{gcorti@dm.uba.ar} }

\thanks{Work for this notes was partly supported by FSE and by grants PICT03-12330, UBACyT-X294, VA091A05, and
MTM00958.}

\maketitle


\section{Introduction}

These notes evolved from the lecture notes of a minicourse given in Swisk, the Sedano Winter School on K-theory held in Sedano, Spain, during the week January 22--27 of 2007, and from
those of a longer course given in the University of Buenos Aires, during
the second half of 2006. They intend to be an introduction to $K$-theory, with emphasis in the comparison between its algebraic and topological variants.
We have tried to keep as elementary as possible.
Section \ref{sec:knle1} introduces $K_n$ for
$n\le 1$. Elementary properties such as matrix stability and excision are discussed. Section \ref{sec:topk}
is concerned with topological $K$-theory of Banach algebras; its excision property is derived from the excision sequence
for algebraic $K_0$ and $K_1$. Cuntz' proof of Bott periodicity for $C^*$-algebras,
via the $C^*$-Toeplitz extension, is sketched. In the next section we review Karoubi-Villamayor $K$-theory,
which is an algebraic version of $K^{\top}$, and has some formally similar properties, such as (algebraic)
homotopy invariance,  but does not satisfy
excision in general. Section \ref{sec:kh} discusses $KH$, Weibel's homotopy $K$-theory, which is introduced
in a purely algebraic, spectrum-free manner. Several of its properties, including excision, homotopy
invariance and the fundamental theorem, are proved. The parallelism between Bott periodicity and the fundamental theorem for
$KH$ is emphasized by the use of the algebraic Toeplitz extension in the proof of the latter.
Quillen's higher $K$-theory is introduced in Section \ref{sec:kq}, via the plus construction of the classifying
space of the general linear group. This is the first place where some algebraic topology is needed.
The ``d\'ecalage" formula $K_n\Sigma R=K_{n-1}R$ via Karoubi's suspension is proved, and some
some of the deep results of Suslin and Wodzicki on excision are discussed. Then the fundamental theorem for
$K$-theory is reviewed, and its formal connection to Bott periodicity via the algebraic Toeplitz extension is
established. The next section is the first of three devoted to the comparison between algebraic and topological
$K$-theory of topological algebras. Using Higson's homotopy invariance theorem, and the excision results of Suslin
and Wodzicki, we give proofs of
the $C^*$- and Banach variants of Karoubi's conjecture, that algebraic and topological $K$-theory become
isomorphic after stabilizing with respect to the ideal of compact operators (theorems of Suslin-Wodzicki and Wodzicki, respectively).
Section \ref{sec:topk2} defines two variants of topological $K$-theory for locally convex algebras:
$KV^{\dif}$ and $KD$ which are formally analogue to $KV$ and $KH$. Some of their basic properties are similar
and derived with essentially the same arguments as their algebraic counterparts. We also give a proof
of Bott periodicity for $KD$ of locally convex algebras stabilized by the algebra of smooth compact
operators. The proof uses the locally convex Toeplitz extension, and is modelled on Cuntz' proof of Bott periodicity
for his bivariant $K$-theory of
locally convex algebras. In Section \ref{sec:compa2} we review some of the results of \cite{cot}. Using
the homotopy invariance theorem of Cuntz and Thom, we show that $KH$ and $KD$ agree on locally
convex algebras stabilized by Fr\'echet operator ideals. The spectra for Quillen's and Weibel's $K$-theory, and the space for Karoubi-Villamayor
$K$-theory are introduced in Section \ref{sec:spectra}, where also the primary and secondary characters going
from $K$-theory to cyclic homology are reviewed. The technical results of this section are used in the
next, where we again deal with the comparison between algebraic and topological $K$-theory of locally
convex algebras. We give proofs of the Fr\'echet variant of Karoubi's conjecture (due to Wodzicki),
and of the $6$-term exact sequence of \cite{cot}, which relates algebraic $K$-theory and cyclic homology to topological $K$-theory
of a stable locally convex algebra.
\section{The groups $K_n$ for $n\le 1$.}\label{sec:knle1}

\begin{notation} Throughout these notes, $A, B, C$ will be rings and $R, S, T$ will be rings with unit.
\end{notation}
\subsection{Definition and basic properties of $K_j$ for $j=0,1$.}\label{sec:basick01}
\bn
Let $R$ be a ring with unit. Write $M_nR$ for the matrix ring. Regard $M_nR\subset M_{n+1}R$ via
\begin{equation}\label{inclumat}
a\mapsto \left[\begin{array}{cc} a & 0\\ 0 & 0\end{array}\right]
\end{equation}
Put
\[
M_\infty R=\bigcup_{n=1}^\infty M_nR
\]
Note $M_\infty R$ is a ring (without unit). We write $\idem_n R$ and $\idem_\infty R$ for the set of idempotent elements of $M_nR$
and $M_\infty R$.
Thus
\[
M_\infty R\supset\idem_\infty R=\bigcup_{n=1}^\infty \idem_nR.
\]
We write $\GL_nR=(M_nR)^*$ for the group of invertible matrices. Regard $\GL_nR\subset \GL_{n+1}R$ via
\[
g\mapsto\left[\begin{array}{cc}g&0\\0&1\end{array}\right]
\]
Put
\[
\GL R:=\bigcup_{n=1}^\infty\GL_nR.
\]
Note $\GL R$ acts by conjugation on $M_\infty R$, $\idem_\infty R$ and, of course, $\GL R$.

For $a,b\in M_\infty R$ there is defined a {\it direct sum} operation
\begin{equation}\label{directsum}
a\oplus b:=\left[\begin{array}{ccccccc}a_{1,1}&0&a_{1,2}&0&a_{1,3}&0&\dots\\
                               0 & b_{1,1}&0&b_{1,2}&0&b_{1,3}&\dots\\
                               a_{2,1}&0&a_{2,2}&0&a_{2,3}&0&\dots\\
                                 \vdots &\vdots &\vdots &\vdots &\vdots & \vdots& \ddots\end{array}\right].
\end{equation}
We remark that if $a, b\in M_pR$ then $a\oplus b\in M_{2p}R$ and is conjugate, by a permutation matrix, to the
usual direct sum
\[
\left[\begin{array}{cc}a&0\\0&b\end{array}\right].
\]
One checks that $\oplus$ is associative and commutative up to conjugation. Thus the coinvariants under the conjugation action
\[
I(R):=((\idem_\infty R)_{\GL R},\oplus)
\]
form an abelian monoid.

\begin{exe}\label{exe:othersums}
The operation \eqref{directsum} can be described as follows. Consider the decomposition $\N=\N_0\coprod \N_1$
into even and odd positive integers; write $\phi_i$ for the bijection $\phi_i:\N\to \N_i$, $\phi_i(n)=2n-i$ $i=0,1$.
The map $\phi_i$ induces an $R$-module monomorphism
\[
\phi_i:R^{(\N)}:=\bigoplus_{n=1}^\infty R\to R^{(\N_i)}\subset R^{(\N)},\qquad e_n\mapsto e_{\phi_i(n)}.
\]
We abuse notation and also write $\phi_i$ for the matrix of this homomorphism with respect to the canonical basis and $\phi_i^t$ for its transpose. Check the formula
\[
a\oplus b=\phi_0a\phi^t_0+\phi_1a\phi^t_1.
\]
Observe that the same procedure can be applied to any decomposition $\N=\N'_0\coprod \N'_1$ into two infinite disjoint subsets
and any choice of bijections $\phi'_i:\N\to \N'_i$, to obtain
an operation $\oplus_{\phi'}:M_\infty R\times M_\infty R\to M_\infty R$. Verify that the operation so obtained defines
the same monoid structure on the coinvariants $(M_\infty R)_{\GL R}$, and thus also on $I(R)$.
\end{exe}

\begin{lem} Let $M$ be an abelian monoid. Then there exist an abelian group $M^+$ and a monoid homomorphism $M\to M^+$
such that if $M\to G$ is any other such homomorphism, then there exists a unique group homomorphism $M^+\to G$ such that
\[
\xymatrix{M\ar[r]\ar[dr]&M^+\ar@{.>}[d]\\&G}
\]
commutes.
\end{lem}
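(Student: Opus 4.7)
The plan is to construct $M^+$ explicitly as the Grothendieck group and then verify its universal property. I would define an equivalence relation on $M\times M$ by declaring
\[
(a,b)\sim (c,d)\iff\exists k\in M\text{ such that }a+d+k=b+c+k,
\]
where I write the monoid operation additively. Intuitively, the pair $(a,b)$ represents the formal difference $a-b$. The first check to perform is that this is indeed an equivalence relation; reflexivity and symmetry are immediate, but transitivity requires the extra slack provided by $k$, because $M$ may fail to be cancellative. Then set $M^+:=(M\times M)/\sim$ and define addition componentwise, $[(a,b)]+[(c,d)]:=[(a+c,b+d)]$. One verifies quickly that this is well-defined and makes $M^+$ into an abelian group, with identity $[(0,0)]$ (or $[(m,m)]$ for any $m\in M$ if $M$ lacks a zero, in which case one first formally adjoins one) and inverse $-[(a,b)]=[(b,a)]$.

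Next I would define the canonical map $\iota\colon M\to M^+$ by $\iota(m):=[(m,0)]$, which is a monoid homomorphism by construction. For the universal property, given any monoid homomorphism $f\colon M\to G$ into an abelian group, define
\[
\tilde f\colon M^+\to G,\qquad \tilde f([(a,b)]):=f(a)-f(b).
\]
Here I would need to check that $\tilde f$ is well-defined on equivalence classes: if $a+d+k=b+c+k$ then applying $f$ and cancelling $f(k)$ in $G$ gives $f(a)-f(b)=f(c)-f(d)$, which uses crucially that $G$ is a group. A short computation shows $\tilde f$ is a group homomorphism and satisfies $\tilde f\circ\iota=f$.

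Finally, for uniqueness, any group homomorphism $g\colon M^+\to G$ extending $f$ must satisfy $g([(a,0)])=f(a)$, and therefore
\[
g([(a,b)])=g([(a,0)])+g([(0,b)])=g([(a,0)])-g([(b,0)])=f(a)-f(b),
\]
so $g=\tilde f$. The main technical subtlety in the whole argument is the presence of the auxiliary element $k$ in the equivalence relation: without it transitivity fails for non-cancellative monoids (such as $I(R)$ itself, where two idempotents might only become stably equivalent after adding a further idempotent). Once that is handled correctly, every other verification is a routine bookkeeping check.
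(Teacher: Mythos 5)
Your proof is correct but follows a genuinely different construction from the paper's. The paper takes the free abelian group $F=\Z^{(M)}$ on generators $e_m$ ($m\in M$) and sets $M^+=F/S$, where $S$ is the subgroup generated by all $e_{m_1}+e_{m_2}-e_{m_1+m_2}$; the universal property then falls out almost immediately by composing the universal property of the free abelian group with that of the quotient. You instead build $M^+$ as the classical Grothendieck group of formal differences, $(M\times M)/\sim$, which is more concrete — it gives an explicit model in which every element is $[(a,b)]=\iota(a)-\iota(b)$ — but puts the burden on verifying that $\sim$ is transitive and that addition descends to equivalence classes. Your observation that the auxiliary $k$ is needed precisely because $M$ need not be cancellative (and indeed $I(R)$ need not be) is the right subtlety to flag, and it is exactly what the ``generated by'' in the paper's $S$ sweeps under the rug. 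One small remark: since an abelian monoid by definition has a neutral element, your parenthetical about adjoining a zero is unnecessary. Both constructions are standard and yield canonically isomorphic groups, so either serves the paper's purposes; the paper's is shorter to state, yours is more useful when one wants to compute with explicit representatives.
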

\begin{proof} Let $F=\Z^{(M)}$ be the free abelian group on one generator $e_m$ for each $m\in M$, and let $S\subset F$ be the subgroup generated by all elements of
the form $e_{m_1}+e_{m_2}-e_{m_1+m_2}$. One checks that $M^+=F/S$ satisfies the desired properties.\qed
\end{proof}

\begin{defi}\label{defi:k01}
\begin{gather*}
K_0(R):=I(R)^+\\
K_1(R):=\frac{\GL R}{[\GL R,\GL R]}=(\GL R)_{ab}.
\end{gather*}
Here $[,]$ denotes the commutator subgroup, and the subscript ${}_{ab}$ indicates abelianization.
\end{defi}
\sn
\sn
\begin{propo}(see \cite[Section 2.1]{rosen})
\begin{itemize}
\item $[\GL R,\GL R]=ER:=<1+ae_{i,j}:a\in R, i\ne j>$, the subgroup of $\GL R$ generated by elementary matrices.
\item If $\alpha\in \GL_n R$ then
\[
\left[\begin{array}{cc}\alpha &0\\0&\alpha^{-1}\end{array}\right]\in E_{2n}R\text{ \qquad\qquad (Whitehead's Lemma).}
\]
(Here $E_{2n}R=ER\cap\GL_{2n}R$).\qed
\end{itemize}
\end{propo}
As a consequence of Whitehead's lemma above, if $\beta\in \GL_n R$, then
\begin{align}\label{whitehead}
\alpha\beta=&\left[\begin{array}{ccc}\alpha\beta&0\\0&1_{n\times n}\end{array}\right]\nonumber\\
=&\left[\begin{array}{cc}\alpha&0\\0&\beta\end{array}\right]\left[\begin{array}{cc}\beta &0\\0&\beta^{-1}\end{array}\right]\\
\equiv&\alpha\oplus\beta\quad\mod ER.\nonumber
\end{align}
\begin{exe}\label{exe:othersums2}
Let $R$ be a unital ring, and let $\phi'$ and $\oplus_{\phi'}$ be as in Exercise \ref{exe:othersums}.
Prove that $\oplus_{\phi'}$ and $\oplus$ define the same operation in $K_1(R)$, which coincides with the product
of matrices.
\end{exe}
\mn
Let $r\ge 1$. Then
\[
p_r=1_{r\times r}\in\idem_\infty R.
\]
Because $p_r\oplus p_s=p_{r+s}$, the assignment $r\mapsto p_r$ defines a monoid homomorphism $\N\to I(R)$. Applying the group completion functor
we obtain a group homomorphism
\begin{equation}\label{ztok0}
\Z=\N^+\to I(R)^+=K_0R.
\end{equation}
Similarly, the inclusion $R^*=\GL_1R\subset \GL R$ induces a homomorphism
\begin{equation}\label{antidet}
R^*_{ab}\to K_1R.
\end{equation}
\begin{exa}\label{exa:conmut}
If $F$ is a field, and $e\in \idem_\infty F$ is of rank $r$, then $e$ is conjugate to $p_r$; moreover $p_r$ and $p_s$ are conjugate
$\iff$ $r=s$. Thus \eqref{ztok0} is an isomorphism in this
case. Assume more generally that $R$ is commutative. Then \eqref{ztok0} is a split monomorphism. Indeed,
there exists a surjective unital homomorphism $R\fib F$ onto a field $F$; the induced map $K_0(R)\to K_0(F)=\Z$ is a left inverse
of \eqref{ztok0}. Similarly, for commutative $R$, the homomorphism \eqref{antidet} is injective, since it is split by the map $det:K_1R\to R^*$ induced by the determinant.
\end{exa}
\begin{exa}\label{dualn} The following are examples of rings for which the maps \eqref{ztok0} and \eqref{antidet} are isomorphisms
(see \cite[Ch.1\S3, Ch.2\S2,\S3]{rosen}): fields, division rings, principal ideal domains and local rings. Recall that a ring $R$ is a {\it local ring} if the
subset $R\backslash R^*$ of noninvertible elements
is an ideal of $R$. For instance if $k$ is a field, then the $k$-algebra $k[\epsilon]:=k\oplus k\epsilon$ with $\epsilon^2=0$ is
a local ring. Indeed $k[\epsilon]^*=k^*+k\epsilon$ and $k[\epsilon]\backslash k[\epsilon]^*=k\epsilon\triqui k[\epsilon]$.
\end{exa}
\begin{exa}\label{exa:rsubi} Here is an example of a local ring involving operator theory. Let $H$ be a separable Hilbert space over $\C$; put
$\cB=\cB(H)$ for the algebra of bounded operators. Write $\cK\subset \B$ for the ideal
of compact operators, and $\cF$ for that of finite rank operators. The Riesz-Schauder theorem from elementary operator theory
implies that if $\lambda\in\C^*$ and $T\in \cK$ then there exists an $f\in\cF$ such that $\lambda+T+f$ is invertible in $\B$.
In fact one checks that if $\cF\subset I\subset\cK$ is an ideal of $\cB$ such that $T\in I$ then the inverse of $\lambda+T+f$
is again in $\C\oplus I$. Hence the ring
\[
R_I:=\C\oplus I/\cF
\]
is local, and thus $K_0(R_I)=\Z$.
\end{exa}
\begin{rem}\label{rem:k0projmod}({\it $K_0$ from projective modules})
In the literature, $K_0$ of a unital ring is often defined in terms of finitely generated projective modules. This approach is equivalent to
ours, as we shall see presently.
If $R$ is a unital ring and $e\in \idem_nR$, then left multiplication by $e$ defines a right module homomorphism $R^n=R^{n\times 1}\to R^n$ with
image $eR^n$. Similarly $(1-e)R^n\subset R^n$ is a submodule, and we have a direct sum decomposition
\[
R^n=eR^n\oplus (1-e)R^n.
\]
Hence $eR^n$ is a  finitely generated projective module, as it is a direct summand of a finitely generated free $R$-module. Note every finitely
generated projective right $R$-module arises in this way for some $n$ and some $e\in\idem_nR$. Moreover, one checks that if $e\in \idem_nR$ and $f\in\idem_mR$, then the modules $eR^n$ and $fR^m$ are isomorphic if and only if the images of $e$ and $f$ in $\idem_\infty R$ define the same class in $I(R)$ (see \cite[Lemma 1.2.1]{rosen}). Thus we have a natural bijection from the monoid $I(R)$ to the set $P(R)$ of isomorphism classes of finitely generated projective modules; further, one checks that the direct sum of idempotents corresponds to the direct sum of modules. Hence the monoids $I(R)$ and $P(R)$ are isomorphic, and therefore they have the same group completion:
\[
K_0(R)=I(R)^+=P(R)^+.
\]
\end{rem}

\sn

\paragraph{\it Additivity.}
 If $R_1$ and $R_2$ are unital rings, then $M_\infty(R_1\times R_2)\to M_\infty R_1\times M_\infty R_2$ is an
isomorphism. It follows from this that the natural map induced by the projections $R_1\times R_2\to R_i$ is an isomorphism:
\[
K_j(R_1\times R_2)\to K_jR_1\oplus K_jR_2\qquad (j=0,1).
\]
\sn
\paragraph{\it Application: extension to nonunital rings.}
If $A$ is any (not necessarily unital) ring, then the abelian group $\tilde{A}=A\oplus\Z$ equipped with the multiplication
\begin{equation}\label{unital_prod}
(a+n)(b+m):=ab+nm\qquad (a,b\in A, \ \ n,m\in\Z)
\end{equation}
is a unital ring, with unit element $1\in\Z$, and $\tilde{A}\to\Z$, $a+n\mapsto n$, is a unital homomorphism. Put
\[
K_j(A):=\ker(K_j\tilde{A}\to K_j\Z) \qquad (j=0,1).
\]
If $A$ happens to have a unit, we have two definitions for $K_jA$. To check that they are the same, one observes that the map
\begin{equation}\label{unitalize}
\tilde{A}\to A\times\Z, \ \ a+n\mapsto (a+n\cdot 1,n)
\end{equation}
is a unital isomorphism. One verifies that, under this isomorphism, $\tilde{A}\to \Z$ identifies with the projection
$A\times \Z\to \Z$, and
$\ker(K_j(\tilde{A})\to K_j\Z)$ with $\ker(K_jA\oplus K_j\Z\to K_j\Z)=K_jA$. Note that the same procedure works to extend
any additive functor of unital rings unambiguously to all rings.
\sn
\begin{notation} We write $\ass$ for the category of rings and ring homomorphisms, and $\ass_1$ for the subcategory of
unital rings and unit preserving ring homomorphisms.\end{notation}
\sn
\begin{rem}\label{rem:nunitalk1}
The functor $\GL:\ass_1\to\Grp$ preserves products. Hence it extends to all rings by
\[
\GL(A):=\ker(\GL(\tilde{A})\to \GL\Z)
\]
It is a straightforward exercise to show that, with this definition, $\GL$ becomes a left exact functor in $\ass$; thus if $A\triqui B$ is
an ideal embedding, then $\GL(A)=\ker(\GL(B)\to\GL(B/A))$. It is straightforward from this that the group $K_1A$ defined above can be described as
\begin{equation}\label{form=k1nunital}
K_1A=\GL(A)/E(\tilde{A})\cap \GL(A)
\end{equation}
A little more work shows that $E(\tilde{A})\cap \GL(A)$ is the smallest normal subgroup of $E(\tilde{A})$ which
contains the elementary
matrices $1+a e_{i,j}$ with $a\in A$ (see \cite[2.5]{rosen}).
\end{rem}
\sn
\paragraph{\it Matrix stability.} Let $R$ be a unital ring and $n\ge 2$.
A choice of bijection $\phi:\N\times \N_{\le n}\cong\N$
gives a ring isomorphism 
$\phi:M_\infty(M_nR)\cong M_\infty(R)$ which induces, for $j=0,1$, a group isomorphism
$\phi_j:K_j(M_nR)\cong K_jR $.
Next, consider the decomposition $\N=\N'_0\coprod\N'_1$, $\N'_0=\phi(\N\times\{1\})$,
 $\N'_1=\phi(\N\times \N_{<n}\backslash \N\times\{1\})$. Setting $\psi_0:\N\to \N'_0$,
$\psi_0(m)=\phi(m,1)$ and choosing any bijection $\psi_1:\N\to \N'_1$, we obtain, as in Exercise \ref{exe:othersums},
 a direct sum operation $\oplus_\psi:M_\infty R\times M_\infty R\to M_\infty R$. Set $\iota: R\mapsto M_nR$, $r\mapsto re_{11}$. The composite
of $M_\infty\iota$ followed by the isomorphism induced by $\phi$ is the map sending
\begin{equation}\label{map:+0}
e_{i,j}(r)\mapsto e_{\phi(i,1),\phi(j,1)}(r)=e_{i,j}(r)\oplus_\psi 0.
\end{equation}
By 
Exercise \ref{exe:othersums} the latter map induces the identity in $K_0$. Moreover, one checks that \eqref{map:+0} induces the map $\GL(M_nR)\to \GL(M_nR)$, $g\mapsto g\oplus_\psi 1$, whence it also gives the identity in $K_1$, by Exercise \ref{exe:othersums2}. It follows that, for $j=0,1$, the map
\[
K_j(\iota):K_j(R)\to K_j(M_nR)
\]
is an isomorphism, inverse to $\phi_j$. Starting with a bijection $\phi:\N\times \N\to \N$ and using the same argument as above, one
shows that also
\[
K_j(\iota):K_j(R)\to K_j(M_\infty R)
\]
is an isomorphism.

\paragraph{\it Nilinvariance for $K_0$.} If $I\triqui R$ is a nilpotent ideal, then $K_0(R)\to K_0(R/I)$ is an isomorphism.
This property is a consequence of the well-known fact that nilpotent extensions admit idempotent liftings, and that any
two liftings of the same idempotent are conjugate (see for example \cite[1.7.3]{ben}). Note that $K_1$ does not have the same property, as the following
example shows.
\sn
\begin{exa}\label{exa:dualn2}
Let $k$ be a field. Then by \ref{dualn}, $K_1(k[\epsilon])=k^*+k\epsilon$ and $K_1(k)=k^*$. Thus $k[\epsilon]\to k[\epsilon]/\epsilon k[\epsilon]=k$
does not become an isomorphism under $K_1$.
\end{exa}

\begin{exa}\label{exa:square-zero}
Let $A$ be an abelian group; make it into a ring with the trivial product: $ab=0$ $\forall a,b\in A$.
The map $A\to\GL_1A$, $a\mapsto 1+a$ is an isomorphism of groups, and thus induces a group homomorphism
$A\to K_1A$. We are going to show that the latter map is an isomorphism. First of all, it is injective,
since $\GL_1(\tilde{A)}\to K_1(\tilde{A})$ is (by \ref{exa:conmut}) and since by definition,
$K_1A\subset K_1(\tilde{A})$. Second, note that if $\epsilon=1+ae_{ij}$ is an elementary matrix with $a\in A$
and $g\in\GL A$, then $(\epsilon g)_{ij}=g_{ij}+a$, and $(\epsilon g)_{p,q}=g_{p,q}$ for $(p,q)\ne (i,j)$.
Thus $g$ is congruent to its diagonal in $K_1 A$. But by Whitehead's lemma, any diagonal matrix in $\GL(\tilde{A})$
is $K_1$-equivalent to its determinant (see \eqref{whitehead}). This shows that $A\to K_1A$ is surjective, whence an isomorphism.
\end{exa}
\begin{rem}
The example above shows that $K_1$ is no longer matrix stable when extended to general nonunital rings. In addition,
it gives another example of the failure of nilinvariance for $K_1$ of unital rings. It follows from
\ref{exa:dualn2} and \ref{exa:square-zero} that if $k$ and $\epsilon$ are as in Example \ref{exa:dualn2}, then
$K_1(k\epsilon)=\ker(K_1(k[\epsilon])\to K_1(k))$. In \ref{exa:swanex}
below, we give an example of a unital ring $T$ such that $k\epsilon$ is an ideal in $T$, and
such that $\ker(T\to T/k\epsilon)=0$.
\end{rem}
\begin{exe}
Prove that $K_0$ and $K_1$ commute with filtering colimits; that is, show that if $I$ is a small filtering
category and $A:I\to \ass$ is a functor, then for $j=0,1$, the map $\colim_I K_jA_i\to K_j(\colim_IA_i)$ is an
isomorphism.
\end{exe}
\subsection{Matrix-stable functors.}

\begin{defi}\label{defi:stability} Let $\fC\subset\ass$ be a subcategory of the category of rings, $S:\fC\to \fC$ a functor,
and $\gamma:1_\fC\to S$ a natural transformation. If $\fD$ is any category, $F:\fC\to\fD$ a functor and $A\in\fC$,
then we say that $F$ is
{\rm stable on $A$} with respect to $(S,\gamma)$ (or $S$-stable on $A$, for short) if the map $F(\gamma_A):F(A)\to F(S(A))$
is an isomorphism. We say that $F$ is {\rm $S$-stable} if it is stable on every $A\in\fC$.
\end{defi}
\begin{exa}
We showed in Section \ref{sec:basick01} that $K_j$ is $M_n$ and even $M_\infty$-stable on unital rings; in both cases, the natural transformation of the definition
above is $r\mapsto re_{11}$.
\end{exa}

\begin{exe}\label{exe:matrix} Let $F:\ass\to \ab$ be a functor and $A$ a ring. Prove:
\item{i)} The following are equivalent:
\begin{itemize}
\item For all $n,p\in \N$, $F$ is $M_p$-stable on $M_nA$.
\item For all $n\in \N$, $F$ is $M_2$-stable on $M_nA$
\end{itemize}
In particular, an $M_2$-stable functor is $M_n$-stable, for all $n$.
\item{ii)} If $F$ is $M_\infty$-stable on both $A$ and $M_nA$, then $F$ is $M_n$-stable on $A$. In particular,
if $F$ is $M_\infty$-stable, then it is $M_n$-stable for all $n$.
\end{exe}

\begin{lem}\label{lem:i_0=i_1}
Let $F:\ass\to \fD$ be a functor, and $A\in\ass$. Assume $F$ is $M_2$-stable on both $A$ and $M_2A$. Then the inclusions
$\iota_0,\iota_1:A\to M_2A$
\[
 \iota_0(a)=ae_{11},\qquad \iota_1(a)=ae_{22}
\]
induce the same isomorphism $FA\to FM_2A$.
\end{lem}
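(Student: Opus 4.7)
The strategy is to lift the comparison from $M_2A$ to $M_4A$, where we have more room for a Whitehead-type cancellation.

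\textbf{Step 1 (Reduction to $M_4A$).} Write $j:=\iota_0^{M_2A}:M_2A\to M_4A$. By the hypothesis of $M_2$-stability on $M_2A$, $F(j)$ is an isomorphism, so it suffices to prove the equality $F(j\iota_0)=F(j\iota_1)$ as morphisms $FA\to FM_4A$; the lemma will then follow by cancelling the iso $F(j)$ on the left.

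\textbf{Step 2 (Identification of the composites).} Unwinding definitions, $j\iota_0(a)=ae_{11}$ and $j\iota_1(a)=ae_{22}$, where $e_{ii}$ now denote matrix units of $M_4A$. These two ring homomorphisms $A\to M_4A$ differ by an inner automorphism of $M_4A$: namely, the transposition matrix
\[
\sigma=e_{12}+e_{21}+e_{33}+e_{44}\in M_4(\tilde A)
\]
satisfies $\sigma(ae_{11})\sigma^{-1}=ae_{22}$ (a direct check; note $\sigma^{-1}=\sigma$). Putting $\Phi:=\mathrm{Ad}_\sigma:M_4A\to M_4A$, one has $\Phi\circ j\iota_0=j\iota_1$, and it remains to prove $F(\Phi)=\mathrm{id}_{FM_4A}$.

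\textbf{Step 3 (Trivialising the inner automorphism).} Regarded through the block decomposition $M_4A=M_2(M_2A)$, the element $\sigma$ takes the form $\mathrm{diag}(s,1)$ with $s=e_{12}+e_{21}\in M_2\Z$ and $s^{-1}=s$. Multiplying $\sigma$ by $\mathrm{diag}(1,s)$ puts it into Whitehead form $\mathrm{diag}(s,s^{-1})$, which by Whitehead's Lemma is a product of elementary matrices in $E_2(\widetilde{M_2A})$. A naturality argument with $\iota_0^{M_2A}$ then shows that conjugation by each such elementary matrix is the identity under $F$: for an elementary $\epsilon=I+xE_{ij}^{(2)}$ one builds a commuting square with $\iota_0^{M_2A}$ and uses that $F(\iota_0^{M_2A})$ is iso to cancel. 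This is the standard ``stability implies inner automorphisms trivialise'' principle, of which the $K_1$-computation \eqref{whitehead} is the archetype.

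The main obstacle is Step 3: the reduction of $\sigma$ to elementary form is direct, but one has to argue carefully that conjugation by each elementary matrix in $E_2(\widetilde{M_2A})$ acts as the identity on $FM_4A$. This is precisely where the full strength of the hypothesis ($M_2$-stability on both $A$ \emph{and} $M_2A$) is used: stability on $M_2A$ is what makes the naturality-square reduction actually cancel, while stability on $A$ is used in Step 1 to ensure $\iota_0$ itself is an isomorphism (which the statement asserts).
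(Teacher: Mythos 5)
Steps 1 and 2 of your proposal are correct and agree with the paper's setup: you reduce to comparing $j\iota_0$ and $j\iota_1$ in $M_4A$, and your $\sigma=e_{12}+e_{21}+e_{33}+e_{44}$ is exactly the paper's $J_2$. The gap is in Step 3. You claim that after factoring $\sigma=\mathrm{diag}(1,s)\cdot\mathrm{diag}(s,s^{-1})$ in $M_2(\widetilde{M_2A})$ and expanding $\mathrm{diag}(s,s^{-1})$ by Whitehead's lemma, conjugation by each elementary factor $\epsilon$ fits a commuting square with the corner inclusion $j=\iota_0^{M_2A}$, so that $F(\mathrm{Ad}_\epsilon)=\mathrm{id}$ follows by cancelling the isomorphism $F(j)$. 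This is false: for $\epsilon=\left[\begin{smallmatrix}1&s\\0&1\end{smallmatrix}\right]$ one computes
\[
\epsilon\,\mathrm{diag}(b,0)\,\epsilon^{-1}=\left[\begin{matrix}b&-bs\\0&0\end{matrix}\right],
\]
which does not lie in the image of $j$. A conjugation commutes with a corner inclusion only when the conjugating matrix is block-diagonal; your $\mathrm{diag}(1,s)$ qualifies and that part of the argument works, but the Whitehead factors of $\mathrm{diag}(s,s^{-1})$ never do. Worse, one has $\mathrm{Ad}_{\mathrm{diag}(s,s^{-1})}\circ j=j\circ\mathrm{Ad}_s$, and $\mathrm{Ad}_s$ on $M_2A$ is exactly the automorphism interchanging $\iota_0$ and $\iota_1$; so proving $F(\mathrm{Ad}_{\mathrm{diag}(s,s^{-1})})=\mathrm{id}$ is logically equivalent to the Lemma itself, and the decomposition into elementaries does not break the circularity. (Note also that Proposition \ref{prop:vw}, the ``inner automorphisms are $F$-trivial'' principle you are implicitly reaching for, is proved in the paper \emph{using} this Lemma, so it cannot be cited here.)

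The paper's proof avoids ever having to show directly that a single conjugation is $F$-trivial. It exhibits two permutation matrices $J_2,J_3\in\GL_4\Z$, of orders $2$ and $3$, such that $\sigma_i:=\mathrm{Ad}_{J_i}$ satisfies $\sigma_i\circ j_0=j_1$ for $i=2,3$. Since $F(j_0)$ is an isomorphism, $F(\sigma_2)=F(j_1)F(j_0)^{-1}=F(\sigma_3)$; this common value is an automorphism of $FM_4A$ whose order divides both $2$ and $3$, hence is the identity, giving $F(j_0)=F(j_1)$. It is the coprimality of the two orders, together with bare functoriality, that does all the work; no naturality squares for the conjugations are needed. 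To repair your proof you would need to replace Step 3 with this (or an equivalent) order-theoretic trick.
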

\begin{proof}
Consider the composites $j_0=\iota_0M_2\circ\iota_0$ and $j_1=\iota_0M_2\circ\iota_1$, and the matrices
\[
J_2=\left[\begin{array}{cccc} 0&1&0&0\\ 1&0&0&0\\ 0&0&1&0\\ 0&0&0&1\end{array}\right],\quad
J_3=\left[\begin{array}{cccc} 0&0&1&0\\ 1&0&0&0\\ 0&1&0&0\\ 0&0&0&1 \end{array}\right]\in \GL_4\Z.
\]
Conjugation by $J_i$ induces an automorphism $\sigma_i$ of $M_4A=M_2M_2A$ of order $i$ such that
\[
\sigma_ij_0=j_1\qquad (i=2,3).
\]
Since $F(j_0)$ is an isomorphism, and the orders of $\sigma_2$ and $\sigma_3$ are relatively prime, it follows that $F(\sigma_2)=F(\sigma_3)=1_{F(M_4A)}$ and hence that
$F(j_0)=F(j_1)$ and $F(\iota_0)=F(\iota_1)$.\qed
\end{proof}
\begin{exe}\label{exe:add} Let $F$ and $A$ be as in Lemma \ref{lem:i_0=i_1}. Assume in addition that $\fD$ and $F$ are
additive. Consider the map
\[
\diag:A\times A\to M_2A,\ \ \diag(a,b)=\left[\begin{array}{cc}a&0\\ 0&b\end{array}\right].
\]
Prove that the composite
\[
F(A)\oplus F(A)=F(A\times A)\overset{F(\diag)}\longrightarrow F(M_2A)\overset{F(i_0)^{-1}}\longrightarrow F(A)
\]
is the codiagonal map (i.e. it restricts to the identity on each copy of $F(A)$).
\end{exe}

\begin{propo}\label{prop:vw} Let $F$ and $A$ be as in Lemma \ref{lem:i_0=i_1}, $A\subset B$ an overring, and
$V,W\in B$ elements such that
\[
WA, AV\subset A,\ \ aVWa'=aa'\ \ (a,a'\in A).
\]
Then
\[
\phi^{V,W}:A\to A,\quad a \mapsto WaV
\]
is a ring homomorphism, and
\[
F(\phi^{V,W}) = 1_{F(A)}.
\]
\end{propo}
\begin{proof} We may assume that $B$ is unital. Consider the elements $V\oplus 1$ and $W\oplus 1\in M_2B$. The hypothesis
guarantee that both $\phi:=\phi^{V,W}$ and $\phi':=\phi^{V \oplus 1,W \oplus 1}:M_2A\to M_2A$ are well-defined ring homomorphisms. Moreover, $\phi'\iota_1=\iota_1$
and $\phi'\iota_0=\iota_0\phi$. It follows that $F(\phi')$ and $F(\phi)$ are the identity
maps, by Lemma \ref{lem:i_0=i_1}.\qed
\end{proof}
\begin{exe}\label{exe:finrank}
\item{i)} Let $R$ be a unital ring and $L$ a free, finitely generated $R$-module of rank $n$. A choice
of basis $\ba$ of $L$ gives an isomorphism $\phi=\phi_\ba:M_nR\to \End_RL$. Use \ref{prop:vw}
to show that $K_j(\phi)$ is independent of the choice of $\ba$ $(j=0,1)$.
\sn
\item{ii)} Assume $R$ is a field. If $e\in \End_RL$ is idempotent, then $\iota_e:R\to\End_RL$, $x\mapsto xe$
is a ring monomorphism. Show that if $e\in \End_RL$ is of rank $1$, then $K_j(\iota_e)=K_j(\phi\iota)$.
In particular, $K_j(\iota_e)$ is independent of the choice of the rank-one idempotent $e$.

\item{iii)} Let $H$ and $\cF$ be as in Example \ref{exa:rsubi}. If $V\subset W\subset H$ are finite dimensional
subspaces and $U=V^{\perp}\cap W$ then the decomposition $W=V\oplus U$ induces an inclusion
$\End_\C(V)\subset \End_\C(W)$. Show that
\[
\cF=\bigcup_{\dim V<\infty}\End_\C(V)
\]
\item{iv)} Prove that if $e\in\cF$ is any self-adjoint, rank-one idempotent, then the inclusion $\C\to\cF$, $x\mapsto xe$,
induces an isomorphism $K_j(\C)\iso K_j(\cF)$. Show moreover that this isomorphism is independent of the
choice of $e$.
\end{exe}

\subsection{Sum rings and infinite sum rings.}
Recall from \cite{wa} that
a {\it sum ring} is a unital ring $R$ together with elements $\alpha_i,\beta_i$, $i=0,1$ such that the following
identities hold
\begin{gather}
\alpha_0\beta_0=\alpha_1\beta_1=1\nonumber\\
\beta_0\alpha_0+\beta_1\alpha_1=1\label{sumring}
\end{gather}
If $R$ is a sum ring, then
\begin{gather}\label{boxplus}
\boxplus:R\times R\to R,\\
(a,b)\mapsto a\boxplus b=\beta_0a\alpha_0+\beta_1b\alpha_1\nonumber
\end{gather}
is a unital ring homomorphism.
\comment{In fact Wagoner has shown that if $a,b\in R$ then there is a matrix $Q\in \GL_3R$ which
conjugates $a\boxplus b$ to $a\oplus b$ (\cite[page 355]{wa}). But as $K_n$ is a matrix stable functor of unital
rings, inner automorphisms induce the identity on $K_nR$ (\cite[5.1.2]{biva}), whence
$\oplus$ and $\boxplus$ are the same operation on $K_nR$. Thus $\boxplus$ is the sum in $K_nR$.}
An {\it infinite sum ring} is a sum ring $R$ together with a unit preserving
ring homomorphism $\infty:R\to R$, $a\mapsto a^\infty$ such that
\begin{equation}\label{ainfi}
a\boxplus a^\infty=a^\infty\qquad (a\in R).
\end{equation}

\begin{propo}\label{prop:sumring}
Let $\fD$ be an additive category, $F:\ass\to\fD$ a functor, and $R$ a sum ring. Assume that the sum
of projection maps $\gamma=F\pi_0+F\pi_1:F(R\times R)\to FR\oplus FR$ is an isomorphism, and that $F$
is $M_2$-stable on both $R$ and $M_2R$.
Then the composite
\[
\xymatrix{F(R)\oplus F(R)\ar[r]^(0.6){\gamma^{-1}} &F(R\times R)\ar[rr]^{F(\boxplus)}&&F(R)}
\]
is the codiagonal map; that is, it restricts to the identity on each copy of $F(R)$. If moreover $R$ is an infinite sum ring,
then $F(R)=0$.
\end{propo}
\begin{proof}
Let $j_0,j_1:R\to R\times R$, $j_0(x)=(x,0)$, $j_1(x)=(0,x)$. Note that $\gamma^{-1}=Fj_0+Fj_1$.
Because $F$ is $M_2$-stable on both $R$ and $M_2R$,
$F(\boxplus)F(j_i)=1_{F(R)}$, by Proposition \ref{prop:vw}. Thus $F(\boxplus)\circ\gamma^{-1}$ is the codiagonal map, as
claimed. It follows that if $\alpha,\beta:R\to R$ are homomorphisms, then $F\alpha+F\beta=F(\boxplus(\alpha,\beta))$.
In particular, if $R$ is an infinite sum ring, then
\[
F(\infty)+1_{F(R)}=F(\infty)+F(1_R)=F(\boxplus(\infty,1_R))=F(\infty).
\]
Thus $1_{F(R)}=0$, whence $F(R)=0$.\qed
\end{proof}

\begin{exa} Let $A$ be a ring. Write $\Gamma A$ for the ring of all $\N\times \N$ matrices
$a=(a_{i,j})_{i,j\ge 1}$ which satisfy the following two conditions:
\begin{itemize}
\item[i)] The set $\{a_{ij}, i,j \in \N \}$ is finite.
\item[ii)] There exists a natural number $N \in \N$ such that each row and each column has at most
$N$ nonzero entries.
\end{itemize}
It is an exercise to show that $\Gamma A$ is indeed a ring and that $M_\infty A\subset\Gamma A$ is an ideal.
The ring $\Gamma A$ is called (Karoubi's) {\it cone ring}; the quotient $\Sigma A:=\Gamma A/M_\infty A$ is the {\it suspension} of $A$.
A useful fact about $\Gamma$ and $\Sigma$ is that the well-known isomorphism
$M_\infty\Z\otimes A\cong M_\infty A$ extends to $\Gamma$, so that there are isomorphisms (see \cite[4.7.1]{biva})
\begin{equation}\label{Gammatenso}
\Gamma \Z\otimes A\iso \Gamma A \text{ and } \Sigma\Z\otimes A\iso \Sigma A.
\end{equation}
\end{exa}
Let $R$ be a unital ring. One checks that
the following elements of $\Gamma R$ satisfy the identities \eqref{sumring}:
\[
\alpha_0 =\sum_{i=1}^\infty e_{i,2i}, \quad \beta_0 =\sum_{i=1}^\infty e_{2i,i},  \quad
\alpha_1 =\sum_{i=1}^\infty e_{i,2i-1}, \quad \mbox{and} \quad \beta_1 =\sum_{i=1}^\infty e_{2i-1,i}.
\]
 Let $a\in\Gamma R$. Because the map $\N\times\N\to \N$, $(k,i)\mapsto 2^{k+1}i+2^k-1$, is injective, the following assignment
gives a well-defined, $\N\times\N$-matrix
\begin{equation}\label{finfty}
\phi^\infty(a) = \sum_{k=0}^\infty \beta_1^k \beta_0 a \alpha_0 \alpha_1^k=\sum_{k,i,j}e_{2^{k+1}i+2^k-1,2^{k+1}j+2^k-1}\otimes a_{i,j}.
\end{equation}
One checks that $\alpha_1\beta_0=\alpha_0\beta_1=0$ and $\alpha_0\alpha_1^i\beta_1^j\beta_0=\delta_{ij}$. It
follows from this that $\phi^\infty$ is a ring endomorphism of $\Gamma R$; it is straightforward that \eqref{ainfi} is satisfied
too. In particular $K_n\Gamma R=0$ for $n=0,1$.

\begin{exe} Let $A$ be a ring. If $m=(m_{i,j})$ is an $\N\times\N$-matrix with coefficients in $A$, and $x\in M_\infty \tilde{A}$,
then both $m\cdot x$ and $x\cdot m$ are well-defined $\N\times \N$-matrices. Put
\begin{equation}\label{waggamma}
\Gamma^\ell A:=\{m\in M_{\N\times\N}A:m\cdot M_\infty \tilde{A}\subset M_\infty A\supset M_\infty \tilde{A}\cdot m\}.
\end{equation}
Prove
\item{i)} $\Gamma^\ell A$ consists of those matrices in $M_{\N\times\N}A$ having
finitely many nonzero elements in each row and column. In particular, $\Gamma^\ell A\supset\Gamma A$.
\item{ii)} The usual matrix sum and product operations make $\Gamma^\ell A$ into a ring.
\item{iii)} If $R$ is a unital ring then $\Gamma^\ell R$ is an infinite sum ring.
\end{exe}
\begin{rem}
The ring $\Gamma^\ell A$ is the cone ring considered by Wagoner in \cite{wa}, where it was denoted $\ell A$. The notion of infinite sum ring
was introduced in {\it loc. cit.}, where it was also shown that if $R$ is unital, then $\Gamma^\ell R$ is an example of such a ring.
\end{rem}
\begin{exe} Let $F:\ass\to\ab$ be a functor. Assume $F$ is both additive and $M_2$-stable for unital rings and for rings of the form $M_\infty R$, with $R$ unital. Show that if $R$ is a unital ring, then the direct sum operation \eqref{directsum}, induces the group operation in $F(M_\infty R)$,
and that the same is true of any of the other direct sum operations of \ref{exe:othersums}.
\end{exe}

\begin{exe} Let $\B$ and $H$ be as in Example \ref{exa:rsubi}. Choose a Hilbert basis $\{e_i\}_{i\ge 1}$ of $H$, and regard
$\B$ as a ring of $\N\times\N$ matrices. With these identifications, show that $\B\supset\Gamma\C$. Deduce from this that $\B$
is a sum ring. Further show that \eqref{finfty} extends to $\B$, so that the latter is in fact an infinite sum ring.
\end{exe}

\subsection{The excision sequence for $K_0$ and $K_1$.}
A reason for considering $K_0$ and $K_1$ as part of the same theory is that they are connected by a long exact
sequence, as shown in Theorem \ref{thm:exci01} below. We need some notation. Let
\begin{equation}\label{abc}
0\to A\to B\to C\to 0
\end{equation}
be an exact sequence of rings. If $\hat{g}\in M_nB$ maps to an invertible matrix $g\in \GL_nC$ and $\hat{g}^*$ maps
to $g^{-1}$, then
\begin{align}\label{elh}
h=h(\hat{g},\hat{g}^*):=&\left[\begin{matrix}2\hat{g}-\hat{g}\hat{g}^*\hat{g}&&\hat{g}\hat{g}^*-1\\1-\hat{g}^*\hat{g}&&\hat{g}^*\end{matrix}\right]\\
                      =& \left[\begin{array}{cc}1&\hat{g}\\ 0&1\end{array}\right]\cdot \left[\begin{array}{cc}1&0\\
-\hat{g}^*&1\end{array}\right]\cdot  \left[\begin{array}{cc}1&\hat{g}\\ 0&1\end{array}\right]\cdot
\left[\begin{array}{cc}0&-1\\ 1&0\end{array}\right]\in \mathrm{E}_{2n}(\tilde{B})\subset\GL_{2n}(\tilde{B})\nonumber
\end{align}
Note that $h$ maps to $\diag(g,g^{-1})\in\GL_{2n}(C)$. Thus $hp_nh^{-1}$ maps to $p_n$, whence
$hp_nh^{-1}-p_n\in M_{2n}A$ and $hp_nh^{-1}\in M_{2n}\tilde{A}$.
Put
\begin{equation}\label{map:partial}
\partial(\hat{g},\hat{g}^*):=[hp_nh^{-1}]-[p_n]\in\ker(K_0(\tilde{A})\to K_0\Z)=K_0A
\end{equation}
\begin{them}\label{thm:exci01}
If \eqref{abc} is an exact sequence of rings, then there is a long exact sequence
\[
\xymatrix{K_1A\ar[r] &K_1B\ar[r]& K_1C\ar[d]^{\partial}\\K_0C&K_0B\ar[l]&K_0A\ar[l]}
\]
The map $\partial$ sends the class of an element $g\in\GL_nC$ to the class of the element \eqref{map:partial}; in
particular the latter depends only on the $K_1$-class of $g$.
\end{them}

\begin{proof}(Sketch) The exactness of the top row of the sequence of the theorem is straightforward.
Putting together \cite[Thms. 1.5.5, 1.5.9 and 2.5.4]{rosen} we obtain the theorem for those sequences \eqref{abc}
in which $B\to C$ is a unital homomorphism. It follows that we have a map of exact sequences
\[
\xymatrix{K_1\tilde{B}\ar[r]\ar[d]&K_1\tilde{C}\ar[d]\ar[r]&
K_0A\ar[d]\ar[r]&K_0\tilde{B}\ar[d]\ar[r]&K_0\tilde{C}\ar[d]\\
K_1\Z\ar@{=}[r]&K_1\Z\ar[r]&0\ar[r]&K_0\Z\ar@{=}[r]&K_0\Z}
\]
Taking kernels of the vertical maps, we obtain an exact sequence
\[
\xymatrix{K_1B\ar[r]&K_1C\ar[r]& K_0A\ar[r]&K_0B\ar[r]&K_0C}
\]
It remains to show that the map $K_1C\to K_0A$ of this sequence is given by the formula of the theorem. This is done
by tracking down the maps and identifications of the proofs of \cite[Thms. 1.5.5, 1.5.9 and 2.5.4]{rosen} (see also
\cite[\S3,\S4]{mil}), and
computing the idempotent matrices to which the projective modules appearing there correspond, taking into account
that $B\to C$ sends the matrix $h\in\GL_{2n}\tilde{B}$ of \eqref{elh} to the diagonal matrix $\diag(g,g^{-1})\in
\GL_{2n}C$.\qed
\end{proof}
\begin{rem}
In \cite[2.5.4]{rosen}, a sequence similar to that of the theorem above is obtained, in which $K_1A$ is replaced
by a relative $K_1$-group $K_1(B:A)$, depending on both $A$ and $B$. For example if $B\to B/A$ is a split
surjection, then (\cite[Exer. 2.5.19]{rosen})
\[
K_1(B:A)=\ker(K_1B\to K_1(B/A))
\]
The groups $K_1(B:A)$ and $K_1A$
are not isomorphic in general (see Example \ref{exa:swanex} below); however their images in $K_1B$ coincide.
We point out also that the theorem above can be deduced directly from Milnor's Mayer-Vietoris sequence for
a Milnor square (\cite[\S4]{mil}).
\end{rem}
The following corollary is immediate from the theorem.

\begin{coro}\label{coro:k0split}
Assume \eqref{abc} is split by a ring homomorphism $C\to B$. Then $K_0A\to K_0B$ is injective, and induces an
isomorphism
\[
K_0A=\ker(K_0B\to K_0C)
\]
Because of this we say that $K_0$ is {\rm split exact}.
\end{coro}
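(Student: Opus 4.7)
The plan is to derive the corollary directly from the long exact sequence of Theorem \ref{thm:exci01}, using the splitting to kill the connecting homomorphism $\partial$.

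First, I would write down the relevant portion of the six-term sequence associated to $0\to A\to B\to C\to 0$, namely
\[
\xymatrix{K_1B\ar[r]^{K_1\pi}& K_1C\ar[r]^{\partial}& K_0A\ar[r]& K_0B\ar[r]^{K_0\pi}& K_0C.}
\]
The splitting $s\colon C\to B$ of $\pi\colon B\to C$ is a ring homomorphism, and functoriality of $K_1$ (which was established for non-unital rings via the unitalization in the discussion preceding Remark \ref{rem:nunitalk1}) yields a map $K_1s\colon K_1C\to K_1B$ with $K_1\pi\circ K_1s=1_{K_1C}$. In particular $K_1\pi$ is surjective.

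Next, by exactness at $K_1C$, the image of $K_1\pi$ equals the kernel of $\partial$; surjectivity of $K_1\pi$ therefore forces $\partial=0$. Exactness at $K_0A$ then says that the kernel of $K_0A\to K_0B$ equals the image of $\partial$, which is zero; hence $K_0A\to K_0B$ is injective. Finally, exactness at $K_0B$ identifies the image of $K_0A\to K_0B$ with the kernel of $K_0\pi$, and combined with injectivity this gives the claimed identification
\[
K_0A=\ker(K_0B\to K_0C).
\]

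There is no real obstacle here; the only mildly delicate point is verifying functoriality of the connecting map and of $K_1$ on non-unital morphisms, so that $K_1s$ actually splits $K_1\pi$. This, however, is built into the definitions via unitalization: $s$ extends to a unital map $\tilde s\colon \tilde C\to\tilde B$ compatible with the augmentations to $\Z$, and the induced map on kernels supplies the required section, after which the argument is just diagram-chasing in the exact sequence.
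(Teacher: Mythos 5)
Your proof is correct and is exactly the argument the paper has in mind: the text simply remarks that the corollary ``is immediate from the theorem,'' and your diagram chase — using the splitting to make $K_1\pi$ a split surjection, hence $\partial=0$, and then reading off injectivity and the kernel identification from exactness — is the standard way to make that remark precise. Your closing observation about functoriality of $K_1$ on non-unital morphisms via unitalization correctly addresses the one potentially delicate point.
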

\begin{exa}\label{exa:swanex}(Swan's example \cite{swan})
We shall give an example which shows that $K_1$ is not split exact. Let $k$ be a field
with at least $3$ elements
(i.e. $k\ne\mathbb{F}_2$). Consider the ring of upper triangular matrices
\[
T:=\left[\begin{array}{cc}k&k\\0&k\end{array}\right]
\]
with coefficients in $k$. The set $I$ of strictly upper triangular matrices forms an ideal of $T$, isomorphic as a ring, to the ideal
$k\epsilon\triqui k[\epsilon]$, via the identification $\epsilon=e_{12}$. By Examples \ref{exa:dualn2} and
\ref{exa:square-zero}, $\ker(K_1(k[\epsilon])\to K_1(k))=K_1(k\epsilon)\cong k\epsilon$, the additive group
underlying $k$. If $K_1$ were split exact, then also
\begin{equation}\label{relak1}
K_1(T:I)=\ker(K_1T\to K_1(k\times k))
\end{equation}
should be isomorphic to $k\epsilon$. However we shall see presently that $K_1(T:I)=0$.
Note that $T\to k\times k$ is split by the natural inclusion $\diag:k\times k\to T$. Thus any element of
$K_1(T:I)$ is the class of an element in $\GL(k\epsilon)$, and by \ref{exa:dualn2} it is
congruent to the class of an element in $\GL_1(k\epsilon)=1+k\epsilon$. We shall show that if $\lambda\in k$,
then $1+\lambda\epsilon\in[\GL_1T,\GL_1T]$. Because we are assuming that $k\neq\mathbb{F}_2$, there exists
$\mu\in k-\{0,1\}$; one checks that
\[
1+\lambda \epsilon=\left[\begin{matrix} 1&\lambda\\ 0&1\end{matrix}\right]=\left[\left
[\begin{matrix}\mu&0\\ 0&1\end{matrix}\right],
\left[\begin{matrix}1&\frac{\lambda}{\mu-1}\\ 0&1\end{matrix}\right]\right]\in [\GL_1T,\GL_1T].
\]
\end{exa}
\begin{exa}
Let $R$ be a unital ring. Applying the theorem above to the cone sequence
\begin{equation}\label{coneseq}
0\to M_\infty R\to \Gamma R\to \Sigma R\to 0
\end{equation}
we obtain an isomorphism
\begin{equation}\label{decal}
K_1\Sigma R=K_0R.
\end{equation}
\end{exa}
\begin{exe}\label{exe:k0valid}
Use Corollary \ref{coro:k0split} to prove that all the properties of $K_0$ stated in \ref{sec:basick01} for
unital rings, remain valid for all rings. Further, show that $K_0(\Gamma A)=0$ for all rings $A$, and thus that for any
ring $A$, the boundary map gives a surjection
\[
K_1\Sigma A\fib K_0 A.
\]
\end{exe}

\subsection{Negative $K$-theory.}

\begin{defi}\label{defi:kneg} Let $A$ be a ring and $n\ge 0$. Put
\[
K_{-n}A:=K_0\Sigma^nA.
\]
\end{defi}

\mn
\begin{propo}\label{prop:kneg}
\item{i)} For $n\le 0$, the functors $K_n:\ass\to \ab$ are additive, nilinvariant
and $M_\infty$-stable.
\item{ii)} The exact sequence of \ref{thm:exci01} extends to negative $K$-theory. Thus if
\[
0\to A\to B\to C\to 0
\]
is a short exact sequence of rings, then for $n\le 0$ we
have a long exact sequence
\[
\xymatrix{K_nA\ar[r] &K_nB\ar[r]& K_nC\ar[d]^{\partial}\\K_{n-1}C&K_{n-1}B\ar[l]&K_{n-1}A\ar[l]}
\]
\end{propo}
\begin{proof}
\item{i)} By \eqref{Gammatenso}, we have $\Sigma A=\Sigma\Z\otimes A$. Thus $\Sigma$ commutes with finite
products and with $M_\infty$, and sends nilpotent rings to nilpotent rings. Moreover,
$\Sigma$ is exact, because both $M_\infty$ and $\Gamma$ are. Hence the general case of i) follows
from the case $n=0$, which is proved in Section \ref{sec:basick01}.
\smallskip
\item{ii)} Consider the sequence
\[
0\to A\to \tilde{B}\to\tilde{C}\to 0
\]
Applying $\Sigma$, we obtain
\[
0\to \Sigma A\to \Sigma \tilde{B}\to\Sigma \tilde{C}\to 0
\]
By \eqref{decal}, if $D$ is any ring, then $K_0\tilde{D}=K_1\Sigma\tilde{D}$. Thus by \ref{thm:exci01} and
\ref{coro:k0split}, we get an exact sequence
\[
\xymatrix{K_0A\ar[r] &K_0B\oplus K_0\Z\ar[r]& K_0C\oplus K_0\Z\ar[d]^{\partial}\\K_{-1}C\oplus K_{-1}\Z
&K_{-1}B\oplus K_{-1}\Z\ar[l]&K_{-1}A\ar[l]}
\]
Splitting off the $K_j\Z$ summands, we obtain
\[
\xymatrix{K_0A\ar[r] &K_0B\ar[r]& K_0C\ar[d]^{\partial}\\
K_{-1}C &K_{-1}B\ar[l]&K_{-1}A\ar[l]}
\]
This proves the case $n=0$ of the proposition. The general case follows from this.\qed
\end{proof}
\begin{exa}\label{k0j}
Let $\B$ be as in Example \ref{exa:rsubi} and $I\triqui\B$ a proper ideal.
It is classical that $\cF\subset I\subset \cK$ for any such ideal. We shall show that the map
\begin{equation}\label{map:k0fk0i}
K_0\F\to K_0I
\end{equation}
is an isomorphism; thus $K_0I=K_0\F=\Z$, by Exercise \ref{exe:finrank}. As in \ref{exa:rsubi} we consider
the local ring $R_I=\C\oplus I/\F$. We have
a commutative diagram with exact rows and split exact columns
\[
\xymatrix{0\ar[r]&\cF\ar@{=}[d]\ar[r]&I\ar[d]\ar[r]&I/\cF\ar[r]\ar[d]&0\\
          0\ar[r]&\cF\ar[r]&\C\oplus I\ar[d]\ar[r]&R_I\ar[r]\ar[d]&0\\
                 &&\C\ar@{=}[r]&\C&}
\]
By \ref{exa:rsubi} and split exactness, $K_0(I/\F)=0$. Thus the map \eqref{map:k0fk0i} is onto. From the diagram above, $K_1(I/\F)\to K_0\F$
factors through $K_1(R_I)\to K_0\F$. But it follows from the discussion of Example \ref{exa:rsubi} that the map
$K_1(\C\oplus I)\to K_1(R_I)$ is onto, whence $K_1(R_I)\to K_0\F$ and thus also $K_1(I/\F)\to K_0\F$, are
zero. Thus \eqref{map:k0fk0i} is an isomorphism.
\end{exa}

\begin{exe}\label{k-1j}
A theorem of Karoubi asserts that $K_{-1}(\cK)=0$ \cite{karcomp}. Use this and excision to show that $K_{-1}(I)=0$ for any operator ideal $I$.
\end{exe}
\begin{exe}\label{exe:colineg}
Prove that if $n<0$, then the functor $K_n$ commutes with filtering colimits.
\end{exe}
\begin{rem}\label{rem:basskneg}
The definition of negative $K$-theory used here is taken from Karoubi-Villamayor's paper \cite{kv},
where cohomological notation is used.
Thus what we call $K_nA$ here is denoted $K^{-n}A$ in {\em loc. cit.} $(n\le 0)$.
There is also another definition, due to Bass \cite{bass}. A proof that the two definitions agree
is given in \cite[7.9]{kv}.
\end{rem}

\sn
\section{Topological $K$-theory}\label{sec:topk}

We saw in the previous section (Example \ref{exa:swanex}) that $K_1$ is not split exact. It follows from this that there is no way of defining higher
$K$-groups such that the long exact sequence of Theorem \ref{thm:exci01} can be extended to higher $K$-theory. This
motivates the question of whether this problem could be fixed if we replaced $K_1$ by some other functor. This is
succesfully done in topological $K$-theory of Banach algebras.
\sn
\subsection{Topological $K$-theory of Banach algebras.} A {\it Banach} ($\C$-) algebra is a $\C$-algebra together with a norm $||\ \ ||$ which makes it into a Banach space and is such
that $||xy||\le ||x||\cdot ||y||$ for all $x,y\in A$. If $A$ is a Banach algebra then its
$\C$-unitalization is the unital Banach algebra
\[
\tilde{A}_\C=A\oplus\C
\]
equipped with the product \eqref{unital_prod} and the norm $||a+\lambda||:=||a||+|\lambda|$. An algebra homomorphism is a morphism of Banach algebras
if it is continuous. If $X$ is a compact Hausdorff space and $V$ is any topological vectorspace, we write
$\cC(X,V)$ for the topological vectorspace of all continuous maps $X\to V$. If $A$ is a Banach algebra,
then $\cC(X,A)$ is again a Banach algebra with norm $||f||_\infty:=\sup_x||f(x)||$. If $X$ is locally compact, $X^+$ its one point
compactification, and $V$ a topological vectorspace, we write
\[
V(X)=\cC_0(X,V)=\{f\in\cC(X^+,V):f(\infty)=0\}.
\]
Note that if $X$ is compact, $V(X)=\cC(X,V)$. If $A$ is a Banach algebra then $A(X)$ is again a Banach algebra,
as it is the kernel of the homomorphism $\cC(X^+,A)\to A$, $f\mapsto f(\infty)$. For example, $A[0,1]$
is the algebra of continous functions $[0,1]\to A$, and $A(0,1]$ and $A(0,1)$ are identified with the ideals of $A[0,1]$
consisting of those functions which vanish
respectively at $0$ and at both endpoints. Two homomorphisms $f_0,f_1:A\to B$ of Banach algebras are called {\it homotopic} if
there exists a homomorphism $H:A\to B[0,1]$ such that the following diagram commutes.
\[
\xymatrix{&B[0,1]\ar[d]^{(\ev_0,\ev_1)}\\ A\ar[ur]^H\ar[r]_{(f_0,f_1)}&B\times B}
\]
A functor $G$ from Banach algebras to a category $\fD$ is called {\it homotopy invariant} if it maps homotopic maps to equal maps.

\begin{exe} Prove that $G$ is homotopy invariant if and only if for every Banach algebra $A$ the map $G(A)\to G(A[0,1])$ induced
by the natural inclusion $A\subset A[0,1]$ is an isomorphism.
\end{exe}

\begin{them}\label{htpyk0} (\cite[1.6.11]{rosen}) The functor
$K_0:((\mathrm{Banach\hspace{1.5pt} Algebras}))\to \ab$ is homotopy invariant.
\end{them}

\begin{exa}\label{nhtpyk1} {\it $K_1$ is not homotopy invariant}. The algebra $A:=\C[\epsilon]$ is a Banach algebra with norm $||a+b\epsilon||=|a|+|b|$. Both the inclusion $\iota:\C\to A$
and the projection $\pi:A\to \C$ are homomorphisms of Banach algebras; they satisfy $\pi\iota=1$. Moreover the map $H:A\to A[0,1]$,
$H(a+b\epsilon)(t)=a+tb\epsilon$ is also a Banach algebra homomorphism, and satisfies ${\ev}_0H=\iota\pi$, ${\ev}_1H=1$. Thus any homotopy
invariant functor $G$ sends $\iota$ and $\pi$ to inverse homomorphisms; since $K_1$ does not do so by
\ref{exa:dualn2}, it is not homotopy invariant.
\end{exa}

Next we consider a variant of $K_1$ which is homotopy invariant.

\begin{defi}\label{defi:k1top}
Let $R$ be a unital Banach algebra. Put
\[
\GL R_0:=\{g\in \GL R:\exists h\in \GL(R[0,1]): h(0)=1,h(1)=g\}.
\]
Note $\GL(R)_0\triqui\GL R$. The {\rm topological} $K_1$ of $R$ is
\[
K_1^{\top}R=\GL R /\GL(R)_0.
\]
\end{defi}
\begin{exe}
Show that if we regard $\GL R=\colim_n\GL_n R$ with the weak topology inherited from the topology of $R$, then
$K_1^{\top}R=\pi_0(\GL R)=\colim_n\pi_0(\GL_n R)$.
Then show that $K_1^{top}$ is homotopy invariant.
\end{exe}

Note that if $R$ is a unital Banach algebra, $a\in R$ and $i\ne j$, then $1+tae_{i,j}\in E(R[0,1])$ is a path connecting
$1$ to the elementary matrix $1+ae_{i,j}$. Thus $ER=[\GL(R),\GL(R)]\subset \GL(R)_0$, whence $\GL(R)_0$ is normal,
and we have a surjection
\begin{equation}\label{k1ontok1top}
K_1R\fib K_1^{\top}R.
\end{equation}
In particular, $K_1^{\top}R$ is an abelian group.

\begin{exa} Because $\C$ is a field, $K_1\C=\C^*$. Since on the other hand $\C^*$ is path connected,
we have $K_1^{\top}\C=0$.
\end{exa}

Note that $K_1^{\top}$ is additive. Thus we can extend $K_1^{\top}$ to nonunital Banach algebras in the usual way, i.e.
\[
K_1^{\top}A:=\ker(K_1^{\top}(\tilde{A}_\C)\to K_1^{\top}\C)=K_1^{\top}(\tilde{A}_\C)
\]
\sn
\begin{exe}\label{exe:k1tononuni} Show that if $A$ is a (not necessarily unital) Banach algebra, then
\[
K_1^{\top} A=\GL A/\GL(A)_0.
\]
\end{exe}

\sn

\begin{propo}\label{prop:gl0onto}(\cite[3.4.4]{black}) If $R\fib S$ is a surjective unital
homomorphism of unital Banach algebras, then $\GL (R)_0\to\GL(S)_0$ is surjective.\qed\end{propo}
\sn
Let
\begin{equation}\label{babc}
0\to A\to B\to C\to 0
\end{equation}
be an exact sequence of Banach algebras. Then
\[
0\to A\to \tilde{B}_\C\to \tilde{C}_\C\to 0
\]
is again exact. By \eqref{k1ontok1top} and \ref{prop:gl0onto}, the connecting map
$\partial:K_1(\tilde{C}_\C)\to K_0A$ of Theorem \ref{thm:exci01} sends $\ker(K_1(\tilde{C}_\C)\to K_1^{\top}C)$ to zero, and thus induces a
homomorphism
\[
\partial:K_1^{\top}C\to K_0A.
\]

\begin{them}\label{thm:excitop01}
The sequence
\[
\xymatrix{K^{\top}_1A\ar[r] &K^{\top}_1B\ar[r]& K_1^{\top}C\ar[d]^{\partial}\\K_0C&K_0B\ar[l]&K_0A\ar[l]}
\]
is exact.
\end{them}
\begin{proof} Straightforward from \ref{thm:exci01} and \ref{prop:gl0onto}.\qed
\end{proof}
Consider the exact sequences
\begin{gather*}
0\to A(0,1]\to A[0,1]\to A\to 0\\
0\to A(0,1)\to A(0,1]\to A\to 0
\end{gather*}
Note moreover that the first of these sequences is split exact.
Because $K_0$ is homotopy invariant and split exact, and because $A(0,1]$ is contractible,
we get an isomorphism
\begin{equation}\label{decato1}
K_1^{\top}A=K_0(A(0,1))
\end{equation}
Since also $K_1^{\top}$ is homotopy invariant, we put
\begin{equation}\label{decato2}
K_2^{\top} A=K_1^{\top}(A(0,1)).
\end{equation}
\begin{lem} If \eqref{babc} is exact, then
\[
0\to A(0,1)\to B(0,1)\to C(0,1)\to 0
\]
is exact too.
\end{lem}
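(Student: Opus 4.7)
The plan is to verify exactness at the three spots, with the surjectivity on the right being the only nontrivial part. Let $\pi:B\to C$ denote the quotient map in \eqref{babc}, and note that $A=\ker\pi$ is a closed subspace of $B$ and $C\cong B/A$ as Banach spaces.

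\textbf{Exactness at $A(0,1)$ and at $B(0,1)$.} If $f\in A(0,1)$ maps to the zero function in $B(0,1)$, then $f(t)=0\in B$ for every $t\in[0,1]$, and since $A\hookrightarrow B$ is injective this forces $f=0$ in $A(0,1)$. For the middle term, a function $g\in B(0,1)$ satisfies $\pi\circ g=0$ in $C(0,1)$ iff $g(t)\in\ker\pi=A$ for every $t$, and in that case the factorization $g:[0,1]\to A$ is automatically continuous (as $A$ carries the subspace topology) and vanishes at $0$ and $1$. Thus $\ker(B(0,1)\to C(0,1))$ coincides with the image of $A(0,1)$.

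\textbf{Surjectivity of $B(0,1)\to C(0,1)$.} This is the main obstacle. The difficulty is that a pointwise lift of a continuous function need not be continuous; we need a \emph{continuous} section. The tool I would invoke is the Bartle--Graves selection theorem: any surjective bounded linear map between Banach spaces admits a continuous (generally nonlinear) section $s:C\to B$ with $\pi\circ s=1_C$. Given $h\in C(0,1)$, extend it by $0$ to $[0,1]$ and set $g_0:=s\circ h:[0,1]\to B$. Then $g_0$ is continuous and $\pi\circ g_0=h$, but the boundary values $g_0(0)=s(0)$ and $g_0(1)=s(0)$ lie in $A$ and need not vanish. Correcting by an affine interpolation, define
\[
g(t):=g_0(t)-(1-t)\,g_0(0)-t\,g_0(1).
\]
Since $g_0(0),g_0(1)\in A$ and $\pi$ is linear, $\pi\circ g=h-0-0=h$, while by construction $g(0)=g(1)=0$; thus $g\in B(0,1)$ lifts $h$.

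\textbf{Remark on the obstacle.} The only nontrivial ingredient is the existence of a continuous set-theoretic section for a surjection of Banach spaces; everything else is immediate from linearity and continuity. One can avoid Bartle--Graves by a direct partition-of-unity argument: use uniform continuity of $h$ on $[0,1]$ to produce piecewise-linear approximate lifts on a fine enough subdivision, then pass to a uniformly convergent series of corrections in $B$ to obtain an exact lift; but Bartle--Graves packages this cleanly.
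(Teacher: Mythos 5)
Your proof is correct, and it is worth noting that the paper itself does not give a proof at all: it merely points to \cite[10.1.2]{ror} for the $C^*$-case and asserts that a similar argument works for Banach algebras. So you have supplied a genuine argument where the paper cites. Your treatment of exactness at the left and middle terms is standard, and the surjectivity argument works: a Bartle--Graves continuous section $s:C\to B$ gives a continuous pointwise lift $g_0=s\circ h$, and since $h(0)=h(1)=0$ the boundary defects $g_0(0)=g_0(1)=s(0)$ lie in $A=\ker\pi$, so the affine correction kills them without disturbing the image under $\pi$. (A small simplification: one may replace $s$ by $s-s(0)$ at the outset, so the correction step disappears.) The alternative you sketch in the final remark --- uniform continuity of $h$ on $[0,1]$, piecewise-linear approximants lifted breakpoint-by-breakpoint with the norm control $\|b\|\le M\|c\|$ coming from the open mapping theorem, and a uniformly convergent telescoping series of corrections --- is in fact closer in spirit to the argument in the cited reference and uses nothing beyond the open mapping theorem, so it may be preferable if one wants to keep the toolkit minimal. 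Either route is a legitimate way to fill the gap the paper leaves to a citation.
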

\begin{proof}
See \cite[10.1.2]{ror} for a proof in the $C^*$-algebra case; a similar argument works for Banach algebras.\qed
\end{proof}
Taking into account the lemma above, as well as \eqref{decato1} and \eqref{decato2}, we obtain the
following corollary of Theorem \ref{thm:excitop01}.
\begin{coro}\label{excitop12}
There is an exact sequence
\[
\xymatrix{K^{\top}_2A\ar[r] &K^{\top}_2B\ar[r]& K^{\top}_2C\ar[d]^{\partial}\\K_1^{\top}C&K^{\top}_1B\ar[l]&K^{\top}_1A\ar[l]}
\]
\smartqed\qed
\end{coro}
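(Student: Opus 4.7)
The plan is to apply Theorem \ref{thm:excitop01} not to the original sequence \eqref{babc}, but to the suspended sequence obtained by tensoring pointwise with $\cC_0(0,1)$. The Lemma just above the corollary guarantees that
\[
0\to A(0,1)\to B(0,1)\to C(0,1)\to 0
\]
is again a short exact sequence of Banach algebras, so Theorem \ref{thm:excitop01} yields an exact six-term sequence
\[
K^{\top}_1 A(0,1)\to K^{\top}_1 B(0,1)\to K^{\top}_1 C(0,1)\overset{\partial}\to K_0A(0,1)\to K_0B(0,1)\to K_0C(0,1).
\]

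Next I would translate each term via the identifications already in place. By \eqref{decato2}, for any Banach algebra $X$ one has $K^{\top}_2 X=K^{\top}_1(X(0,1))$, which rewrites the first three terms as $K^{\top}_2 A,K^{\top}_2 B,K^{\top}_2 C$. By \eqref{decato1}, $K^{\top}_1 X=K_0(X(0,1))$, so the last three terms become $K^{\top}_1 A,K^{\top}_1 B,K^{\top}_1 C$. Substituting gives exactly the sequence claimed in the corollary, with its connecting map $\partial\colon K^{\top}_2 C\to K^{\top}_1 A$.

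The only subtle point to verify is that the maps one obtains in this way really are the ones meant in the corollary, i.e.\ that the identifications \eqref{decato1} and \eqref{decato2} are natural in the Banach algebra. For the horizontal maps this is clear, since they are induced by $A(0,1)\to B(0,1)\to C(0,1)$, which are just post-composition with the given homomorphisms. For $\partial$ one observes that both \eqref{decato1} and \eqref{decato2} are instances of the boundary map of Theorem \ref{thm:excitop01} applied to the split short exact sequence $0\to X(0,1)\to X(0,1]\to X\to 0$ (using that $X(0,1]$ is contractible so its $K$-theory vanishes); naturality of $\partial$ in short exact sequences of Banach algebras, which follows from naturality of the construction \eqref{map:partial} at the level of $\tilde{(\,\cdot\,)}_\C$, then implies that the two $\partial$'s are compatible.

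I expect this naturality/compatibility of boundary maps to be the only genuinely technical piece, though it is really bookkeeping rather than a serious obstacle; everything else is a direct substitution into Theorem \ref{thm:excitop01}. No new input beyond the lemma, Theorem \ref{thm:excitop01}, and the definitions \eqref{decato1}--\eqref{decato2} is required.
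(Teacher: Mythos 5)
Your proof is correct and follows the same route the paper takes: apply Theorem \ref{thm:excitop01} to the suspended sequence $0\to A(0,1)\to B(0,1)\to C(0,1)\to 0$ (exact by the preceding Lemma) and rewrite the six terms using \eqref{decato1} and \eqref{decato2}. One small remark: the ``subtle point'' you raise about naturality is mostly a non-issue here, since \eqref{decato2} is a \emph{definition} ($K_2^{\top}X:=K_1^{\top}(X(0,1))$) and the corollary only asserts the existence of an exact sequence with some connecting map $\partial$; so the direct substitution already produces the claimed sequence without further bookkeeping. Your naturality observation becomes relevant the moment one wants to compare this $\partial$ with boundary maps arising elsewhere (e.g.\ when iterating to higher $K^{\top}_n$ or when relating to the Bott map), and it is a correct observation, but it is not needed to establish the statement as written.
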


The sequence above can be extended further by defining inductively
\[
K_{n+1}^{\top}A:=K_n^{\top}(A(0,1)).
\]
\subsection{Bott periodicity.}
Let $R$ be a unital Banach algebra. Consider the map $\beta:\idem_n R\to \GL_n \cC_0(S^1,R)$,
\begin{equation}\label{formubeta}
\beta(e)(z)=ze+1-e
\end{equation}
This map induces a group homomorphism $K_0R\to K_1^{\top}\cC_0(S^1,R)$ (see \cite[9.1]{black}). If $A$ is any Banach algebra, we
write $\beta$ for the composite
\begin{multline}\label{betato}
K_0 A\to K_0(\tilde{A}_\C)\overset\beta\to K_1^{\top}(\cC_0(S^1,\tilde{A}_\C))\\
=K_1^{\top}\C(0,1)\oplus K_1^{\top} A(0,1)\fib  K_1^{\top} A(0,1)=K_2^{\top}A
\end{multline}
One checks that for unital $A$ this defintion agrees with that given above.

\sn

\begin{them}\label{thm:bott}(Bott periodicity) (\cite[9.2.1]{black}) The composite map \eqref{betato} is an isomorphism.
\end{them}

Let \eqref{babc}
be an exact sequence of Banach algebras. By \ref{excitop12} we have a map $\partial: K_2^{\top}C\to K_1^{\top}A$.
Composing with the Bott map, we obtain a homomorphism
\[
\partial\beta:K_0 C\to K_1^{\top}A.
\]
\begin{them}\label{thm:exci_top_round}
If \eqref{babc} is an exact sequence of Banach algebras, then the sequence
\[
\xymatrix{K^{\top}_1A\ar[r] &K^{\top}_1B\ar[r]& K_1^{\top}C\ar[d]^{\partial}\\K_0C\ar[u]^{\partial\beta}&K_0B\ar[l]&K_0A\ar[l]}
\]
is exact.
\end{them}
\begin{proof} Immediate from Theorem \ref{thm:excitop01}, Corollary \ref{excitop12} and Theorem \ref{thm:bott}.\qed
\end{proof}
\subsubsection{\bf Sketch of Cuntz' proof of Bott periodicity for $C^*$-algebras.}\label{skecuntz}(\cite[Sec. 2]{cu1046})
A $C^*$-algebra is a Banach algebra $A$ equipped with additive map $*:A\to A$ such that $(a^*)^*=a$,
 $(\lambda a)^*=\bar{\lambda}a^*$, $(ab)^*=b^*a^*$ and $||aa^*||=||a||^2$ ($\lambda\in\C$, $a,b\in A$).
The {\it Toeplitz} $C^*$-algebra
is the free unital $C^*$-algebra $\cT^{\top}$ on a generator $\alpha$ subject to $\alpha\alpha^*=1$.
Since the shift $s:\ell^2(\N)\to\ell^2(\N)$, $s(e_1)=0$, $s(e_{i+1})=e_i$ satisfies $ss^*=1$, there is a homomorphism
$\cT^{\top}\to \B=\B(\ell^2(\N))$. It turns out that this is a monomorphism, that its image contains the ideal $\cK$, and that the latter
is the kernel of the $*$-homomorphism $\cT^{\top}\to \C(S^1)$ which sends $\alpha$ to the identity function $S^1\to S^1$. We have
a commutative diagram with exact rows and split exact columns:
\begin{equation}\label{diag:cuntz}
\xymatrix{0\ar[r]&\cK\ar@{=}[d]\ar[r]&\cT^{\top}_0\ar[r]\ar[d]&\C(0,1)\ar[r]\ar[d]&0\\
          0\ar[r]&\cK\ar[r]&\cT^{\top}\ar[r]\ar[d]&\C(S^1)\ar[r]\ar[d]_{\ev_1}&0\\
&&\C\ar@{=}[r]&\C&}
\end{equation}
Here we have used the identification $\cC_0(S^1,\C)=\C(0,1)$, via the exponential map; $\cT_0^{\top}$ is defined
so that the middle column be exact. Write $\overset\sim\otimes=\otimes_{\min}$ for the $C^*$-algebra
tensor product. If now $A$ is any $C^*$-algebra, and we apply
the functor $A\overset{\sim}\otimes$ to the diagram \eqref{diag:cuntz}, we obtain
a commutative diagram whose columns are split exact and whose rows are still exact (by nuclearity, see \cite[Appendix T]{wegge}).
\[
\xymatrix{0\ar[r]&A\sotimes\cK\ar@{=}[d]\ar[r]&A\sotimes\cT^{\top}_0\ar[r]\ar[d]&A(0,1)\ar[r]\ar[d]&0\\
          0\ar[r]&A\sotimes\cK\ar[r]&A\sotimes\cT^{\top}\ar[r]\ar[d]&A(S^1)\ar[r]\ar[d]&0\\
&&A\ar@{=}[r]&A&}
\]
The inclusion $\C\subset M_\infty\C\subset \cK=\cK(\ell^2(\N))$, $\lambda\mapsto \lambda e_{1,1}$ induces a natural
transformation $1\to \cK\sotimes-$; a functor $G$ from
$C^*$-algebras to abelian groups is {\it $\cK$-stable} if it is stable with respect to this data in the sense of Definition
\ref{defi:stability}. We say that $G$ is {\it half exact} if for every exact sequence \eqref{babc}, the sequence
\[
GA\to GB\to GC
\]
is exact.
\begin{rem}
In general, there is no precedence between the notions of split exact and half exact. However a functor of $C^*$-algebras
which is homotopy invariant, additive and half exact is automatically split exact (see \cite[\S21.4]{black}).
\end{rem}
\sn
\sn
The following theorem of Cuntz is stated in the literature for half exact rather than split exact functors. However the proof
uses only split exactness.
\begin{them}\label{thm:cu}(\cite[4.4]{cu1046})
Let $G$ be a functor from $C^*$-algebras to abelian groups. Assume that
\begin{itemize}
\item $G$ is homotopy invariant.
\item $G$ is $\cK$-stable.
\item $G$ is split exact.
\end{itemize}

Then for every $C^*$-algebra $A$,
\[
G(A\sotimes\cT^{\top}_0)=0.
\]
\smartqed\qed
\end{them}
\bn
\begin{propo}\label{fact:k0stable}(\cite[6.4.1]{ror}) $K_0$ is $\cK$-stable .\qed\end{propo}
\sn
It follows from the proposition above, \eqref{decato1}, Cuntz' theorem and excision, that the connecting map $\partial:K_1^{\top}(A(0,1))\to K_0(A\sotimes\cK)$
is an isomorphism. Further, one checks, using the explicit formulas for $\beta$ and $\partial$ (\eqref{formubeta},
\eqref{map:partial}), that the following diagram commutes
\[
\xymatrix{K_1^{\top}A(0,1)\ar[r]^\partial&K_0(A\sotimes\cK)\\&K_0 A\ar[ul]_\beta\ar[u]^\wr}
\]
This proves that $\beta$ is an isomorphism.
\bn

\section{Polynomial homotopy and Karoubi-Villamayor $K$-theory}\label{sec:polikv} 
In this section we analyze to what extent the results of the previous section on topological $K$-theory
of Banach algebras have algebraic analogues valid for all rings. We shall not consider continuous homotopies for general rings, among other reasons, because in general they do not carry any interesting
topologies. Instead, we shall consider polynomial homotopies. Two ring homomorphisms $f_0,f_1:A\to B$ are called
{\it elementary homotopic} if there exists a ring homomorphism $H:A\to B[t]$ such that the following diagram
commutes
\[
\xymatrix{& B[t]\ar[d]^{(\ev_0,\ev_1)}\\
          A\ar[ur]^H\ar[r]_{(f_0,f_1)}&  B}
\]
Two homomorphisms $f,g:A\to B$ are {\it homotopic} if there is a finite sequence $(f_i)_{0\le i\le n}$ of
homomophisms such that $f=f_0$, $f_n=g$, and such that for all $i$, $f_i$ is homotopic to $f_{i+1}$. We write $f\sim
g$ to indicate that $f$ is homotopic to $g$. We say that a functor $G$ from rings to a category $\fD$ is {\it homotopy
invariant} if it maps the inclusion $A\to A[t]$ ($A\in \ass$) to an isomorphism. In other words, $G$ is homotopy
invariant if it is stable (in the sense of \ref{defi:stability}) with respect to the natural inclusion $A\to A[t]$.
One checks that $G$ is homotopy invariant if and only if it preserves the homotopy relation between homomorphisms.
If $G$ is any functor, we call a ring $A$ {\it $G$-regular} if $G A\to
G(A[t_1,\dots,t_n])$ is an isomorphism for all $n$.
\begin{exam}\label{exa:k0reg} Noetherian regular rings are $K_0$-regular \cite[3.2.13]{rosen} (the same is true for
all Quillen's $K$-groups, by a result of Quillen \cite{q341}; see Schlichting's lecture notes \cite{sch}) and moreover for $n<0$, $K_n$ vanishes on such rings
(by \cite[3.3.1]{rosen} and Remark \ref{rem:basskneg}).
If $k$ is any field, then the ring $R=k[x,y]/<y^2-x^3>$ is not $K_0$-regular (this follows from
\cite[I.3.11 and II.2.3.2]{chupro}). By \ref{exa:conmut} and \ref{exa:dualn2}, the ring $k[\epsilon]$ is not $K_1$-regular;
indeed the $K_1$-class of the element $1+\epsilon t\in k[\epsilon][t]^*$ is a nontrivial element of
$\ker(K_1(k[\epsilon][t])\to K_1(k[\epsilon]))=\coker(K_1(k[\epsilon])\to K_1(k[\epsilon][t])$.
\end{exam}
The Banach algebras of paths and loops have the following algebraic analogues. Let $A$ be a ring; let
$\ev_i:A[t]\to A$ be the evaluation homomorphism ($i=0,1$). Put
\begin{gather}\label{lupez}
PA:=\ker(A[t]\overset{\ev_0}\to A)\\
\Omega A:=\ker( PA\overset{\ev_1}\to A)\label{omega}
\end{gather}
The groups $GL(\ \ )_0$ and $K_1^{\top}$ have the following algebraic analogues. Let $A$ be a unital ring. Put
\begin{align*}
\GL(A)'_0=&\ima(\GL PA\to \GL A\}\\
=&\{g\in \GL A:\exists h\in \GL(A[t]): h(0)=1,h(1)=g\}.
\end{align*}
Set
\[
KV_1A:=\GL A/\GL(A)'_0.
\]
The group $KV_1$ is the $K_1$-group of Karoubi-Villamayor \cite{kv}. It is abelian, since as we shall see
in Proposition \ref{prop:kv1ppties} below, there is a natural surjection $K_1A\fib KV_1A$. Unlike what happens with its topological
analogue, the functor $GL(\ \ )'_0$ does not preserve surjections (see Exercise \ref{exe:gl0notepi} below).
As a consequence, the $KV$-analogue of
\ref{thm:exci01} does not hold for general short exact sequences of rings, but only for those sequences \eqref{abc}
such that $\GL(B)'_0\to \GL(C)'_0$ is onto, such as, for example, split exact sequences.
Next we list some of the basic properties of $KV_1$; all except nilinvariance (due independently to
Weibel \cite{wn} and Pirashvili \cite{pira}) were proved by Karoubi and Villamayor in \cite{kv}.
\begin{prop}\label{prop:kv1ppties}
\item{i)} There is a natural surjective map $K_1 A\fib KV_1A$ $(A\in\ass)$.
\item{ii)} The rule $A\mapsto KV_1A$ defines a split-exact functor $\ass\to\ab$.
\item{iii)} If \eqref{abc} is an exact sequence such that the map $\GL(B)'_0\to \GL(C)'_0$ is onto, then
the map $K_1C\to K_0A$ of Theorem \ref{thm:exci01} factors through $KV_1 C$, and the resulting sequence
\[
\xymatrix{KV_1A\ar[r] &KV_1B\ar[r]& KV_1C\ar[d]^{\partial}\\K_0C&K_0B\ar[l]&K_0A\ar[l]}
\]
is exact.
\item{iv)}$KV_1$ is additive, homotopy invariant, nilinvariant and $M_\infty$-stable.
\end{prop}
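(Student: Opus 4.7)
The plan is to exploit the natural surjection $K_1\twoheadrightarrow KV_1$ wherever possible, and otherwise work explicitly with polynomial paths. For \emph{(i)}, the elementary matrix $1+ae_{ij}$ is connected to the identity by the polynomial $1+tae_{ij}\in\GL(R[t])$, so $ER\subset\GL(R)_0'$ and the quotient $\GL R\to KV_1R$ factors through $K_1R$; this extends to nonunital rings via unitalization. For \emph{(ii)}, one checks that $\GL(A)_0'=\GL(A)\cap\GL(B)_0'$ when the extension splits: given a path $h\in\GL(B[t])$ from $1$ to $g\in\GL(A)$ with image $\bar h$ in $\GL(C[t])$, the modified path $h\cdot s(\bar h)^{-1}$ stays in $\GL(A[t])$ (by left exactness of $\GL$) and still connects $1$ to $g$. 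Passing to quotients yields $KV_1A=\ker(KV_1B\to KV_1C)$, with splitting induced by the section $s$.

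For \emph{(iii)}, I would first show the connecting map $\partial\colon K_1C\to K_0A$ of Theorem \ref{thm:exci01} vanishes on $\ker(K_1C\to KV_1C)$. Given $g\in\GL_n(C)_0'$, the hypothesis lifts $g$ to an invertible $\hat g\in\GL_n(B)_0'$; taking $\hat g^*=\hat g^{-1}$ in formula \eqref{elh} makes $h=\diag(\hat g,\hat g^{-1})$, which commutes with $p_n$, so $\partial[g]=0$. Hence $\partial$ factors through $KV_1C$. Exactness at the three spots then reduces to diagram chasing from Theorem \ref{thm:exci01}, using the surjection $K_1X\twoheadrightarrow KV_1X$ for $X\in\{B,C\}$ and the fact that any $\GL(C)_0'$-lift to $\GL(B)_0'$ becomes trivial in $KV_1B$.

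For \emph{(iv)}, additivity is immediate from additivity of $\GL$ and of $\GL(\ )_0'$, since products and componentwise operations of polynomial paths remain polynomial paths. Homotopy invariance follows because $\ev_0\colon A[t]\to A$ retracts the inclusion, and for any $g\in\GL_n(A[t])$ the two-variable polynomial $H(t,s)=g(ts)\in\GL_n(A[t,s])$ is a path in $\GL(A[t])$ connecting the constant matrix $g(0)$ to $g$. For nilinvariance, surjectivity of $\GL R\to\GL(R/I)$ holds because any lift of a unit modulo a nilpotent ideal is automatically invertible, which reduces injectivity on $KV_1$ to showing that $u\in 1+M_n(I)$ lies in $\GL(R)_0'$; this in turn holds because $1+t(u-1)\in\GL_n(R[t])$ (its inverse is the truncated geometric series, $t(u-1)$ being nilpotent). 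Finally, $M_\infty$-stability is obtained by transporting the argument for $K_1$ (Exercises \ref{exe:othersums}--\ref{exe:othersums2}) through the natural surjection of (i), the identifications used there being polynomial-homotopic and hence preserved in the quotient.

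The most delicate step is nilinvariance, whose crucial algebraic input is that $1+ta$ is a unit in $R[t]$ whenever $a\in M_n(R)$ is nilpotent; this explains why the property was established only later, by Weibel and Pirashvili. Everything else either reduces to Theorem \ref{thm:exci01} or is obtained by naturality and quotient arguments starting from the corresponding fact for $K_1$.
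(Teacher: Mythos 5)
Your proposal is correct, and in two places it actually sharpens the paper's terse argument. For (iii), you make explicit why $\partial$ kills $\GL(C)'_0$: lifting $g\in\GL(C)'_0$ through the assumed surjection $\GL(B)'_0\to\GL(C)'_0$ produces an \emph{invertible} lift $\hat g$, and substituting $\hat g^*=\hat g^{-1}$ into formula \eqref{elh} yields $h=\diag(\hat g,\hat g^{-1})$, which commutes with $p_n$, so $\partial[g]=0$; the paper only asserts that (iii) follows from (i) and the exactness of \eqref{seq:kv1}. For nilinvariance your route is genuinely different: the paper first shows $\GL(B)'_0\to\GL(C)'_0$ is onto when the kernel $A$ is nilpotent, applies (iii) to get a right-exact sequence $KV_1A\to KV_1B\to KV_1C\to 0$, reduces to the square-zero case, and then derives $KV_1A=0$ from homotopy invariance via $H\colon a\mapsto at$. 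You instead lift polynomial paths directly, using that $I[t]\triqui R[t]$ is nilpotent to lift invertibles, normalize so the lifted path starts at $1$, and absorb the residual discrepancy $u\in 1+M_n(I)$ by the explicit path $1+t(u-1)$, invertible via a truncated geometric series. This is more elementary and bypasses the implicit induction on nilpotency degree in the paper's reduction to $A^2=0$.

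Two points you glide over. Extending (i) to nonunital rings ``via unitalization'' already uses $KV_1A=\ker(KV_1\tilde A\to KV_1\Z)$, which is a consequence of the split exactness established in (ii) — the paper proves (i) and (ii) intertwined precisely because of this. Similarly, the $K_1$-style argument for $M_\infty$-stability is only valid for unital rings (indeed $K_1$ is \emph{not} $M_\infty$-stable on nonunital rings, cf.\ Example \ref{exa:square-zero}), so the nonunital case of $M_\infty$-stability for $KV_1$ has to be recovered from the unital case by split exactness, not merely by pushing the $K_1$-argument through the surjection of (i).
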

\begin{proof}
If \eqref{abc} is exact and $\GL(B)'_0\to \GL(C)'_0$ is onto, then it is clear that
\begin{equation}\label{seq:kv1}
KV_1A\to KV_1 B\to KV_1C
\end{equation}
is exact, and we have a commutative diagram with exact rows and columns
\begin{equation}\label{diag:kv}
\xymatrix{&1\ar[d]&1\ar[d]&1\ar[d]\\
1\ar[r]&\GL\Omega A\ar[d]\ar[r]&\GL\Omega B\ar[d]\ar[r]&\GL\Omega C\ar[d]\\
1\ar[r]&\GL PA\ar[d]\ar[r]&\GL PB\ar[d]\ar[r]&\GL PC\ar[d]\\
1\ar[r]&\GL A\ar[r]&\GL B\ar [r]&\GL C}
\end{equation}
If moreover \eqref{abc} is split exact, then each row in the diagram above is, and one checks, by looking at this
diagram, that $\GL(A)'_0=\GL(B)'_0\cap \GL(A)$, whence $KV_1 A\to KV_1B$ is injective. Thus
\begin{equation}\label{seq:kv1rightsplit}
0\to KV_1A\to KV_1B\to KV_1C\to 0
\end{equation}
is exact. In particular
\begin{equation}\label{kv1nunit}
KV_1 A=\ker(KV_1\tilde{A}\to KV_1\Z).
\end{equation}
If $R$ is unital, then $\GL(R)'_0\supset E(R)$, by the same argument as in the Banach algebra case \eqref{k1ontok1top}.
In particular $KV_1 R$ is abelian. This proves the unital case of i);
the general case follows from the unital one using \eqref{kv1nunit}. Thus the right split exact sequence \eqref{seq:kv1rightsplit}
is in fact a split exact sequence of abelian groups. It follows that $KV_1$ is split exact, proving ii).
Part iii) follows from part i) and \eqref{seq:kv1}.
The proof that $KV_1$ is $M_\infty$-stable on unital rings is the same as the proof that $K_1$ is. By split exactness, it follows that $KV_1$ is $M_\infty$-stable
on all rings. To prove that $KV_1$ is homotopy invariant, we must show that the split surjection
$\ev_0:KV_1(A[t])\to KV_1 A$ is injective. By split exactness, its kernel is $KV_1PA=\GL PA/\GL(PA)'_0$, so we must prove that
$\GL PA\subset \GL(PA)'_0$. But if $\alpha(s)\in \GL PA$, then $\beta(s,t):=\alpha(st)\in \GL PPA$ and
$\ev_{t=1}(\beta)=\alpha$. Thus homotopy invariance is proved. If \eqref{abc} is exact and $A$ is nilpotent, then
$PA$ and $\Omega A$ are nilpotent too, whence all those maps displayed in diagram \eqref{diag:kv} which are induced by $B\to C$
are surjective. Diagram chasing shows that $\GL(B)_0'\to \GL(C)'_0$ is surjective, whence by iii) we have an exact sequence
\[
KV_1A\to KV_1B\to KV_1C\to 0
\]
Thus to prove $KV_1$ is nilinvariant, it suffices to show that if $A^2=0$ then $KV_1A=0$. But if $A^2=0$,
then the map $H:A\to A[t]$, $H(a)=at$ is a ring homomorphism, and satisfies $\ev_0H=0$, $\ev_1H=1_A$. Hence
$KV_1A=0$, by homotopy invariance.\qed
\end{proof}
Consider the exact sequence
\begin{equation}\label{lopez}
0\to \Omega A\to PA\overset{\ev_1}\to A\to 0
\end{equation}
By definition, $\GL(A)'_0=\ima(\GL(PA)\to \GL(A))$. But in the course of the proof of Proposition \ref{prop:kv1ppties}
above, we have shown that $\GL(PA)=\GL(PA)'_0$, so by \ref{prop:kv1ppties} iii), we have a natural  map
\begin{equation}\label{kv1k0}
KV_1(A)\overset\partial\to K_0(\Omega A).
\end{equation}
Moreover, \eqref{kv1k0} is injective, by \ref{prop:kv1ppties} iii) and iv).
This map will be of use in the next section.\goodbreak
Higher $KV$-groups are defined by iterating the loop functor $\Omega$:
\[
KV_{n+1}(A)=KV_1(\Omega^nA).
\]
Higher $KV$-theory satisfies excision for those exact sequences $\eqref{abc}$ such that
 for every $n$, the map
\[
\{\alpha\in \GL(B[t_1,\dots,t_n]):\alpha(0,\dots,0)=1\}\to \{\alpha\in \GL(C[t_1,\dots,t_n]):\alpha(0,\dots,0)=1\}
\]
is onto. Such sequences are usually called $\GL$-fibration sequences (a different notation was used in \cite{kv}). Note that
if \eqref{abc} is a $\GL$-fibration, then $\GL(B)'_0\to\GL(C)'_0$ is surjective, and thus \ref{prop:kv1ppties} iii)
applies. Moreover it is proved in \cite{kv} that if \eqref{abc} is a $\GL$-fibration,
then there is a long exact sequence $(n\ge 1)$
\[
\xymatrix{KV_{n+1}B\ar[r]& KV_{n+1}C\ar[r]&KV_n(A)\ar[r]&KV_n(B)\ar[r]&KV_n(C).}
\]

\section{Homotopy $K$-theory}\label{sec:kh}
\subsection{Definition and basic properties of $KH$.}
Let $A$ be a ring. Consider the natural map
\begin{equation}\label{map:delukh}
\partial:K_0A\to K_{-1}\Omega A
\end{equation}
associated with the exact sequence \eqref{omega}. Since $K_{-p}=K_0\Sigma^p$, we may iterate the construction and form the colimit
\begin{equation}\label{kh0}
KH_0A:=\colim_p K_{-p}\Omega^pA.
\end{equation}
Put
\begin{equation}\label{khn}
KH_nA:=\colim_pK_{-p}\Omega^{n+p}A=\left\{\begin{array}{cc}KH_0\Omega^nA& (n\ge 0)\\ KH_0\Sigma^nA&(n\le 0)\end{array}\right.
\end{equation}
The groups $KH_*A$ are Weibel's {\it homotopy $K$-theory} groups of $A$ (\cite{wh},\cite[8.1.1]{biva}). One
can also express $KH$ in terms of $KV$, as we shall see presently. We need some preliminaries first.
We know that $K_1(S)=0$ for every infinite sum ring $S$; in particular $KV_1(\Gamma R)=0$
for unital $R$, by \ref{prop:kv1ppties} i). Using split exactness of $KV_1$, it follows that $KV_1\Gamma A=0$ for every ring $A$. Thus the dotted
arrow in the commutative diagram with exact row below exists
\[
\xymatrix{K_1(\Gamma A)\ar[r]&K_1(\Sigma A)\ar@{>>}[d]\ar[r]&K_0(A)\ar@{.>}[dl]\ar[r]& 0\\
                    & KV_1(\Sigma A)&&}
\]
The map $K_1(\Sigma A)\to KV_1(\Sigma A)$ is surjective by Proposition \ref{prop:kv1ppties} i). Thus the dotted
arrow above is a surjective map
\begin{equation}\label{map:k0kv1sigma}
K_0(A)\fib KV_1(\Sigma A).
\end{equation}
On the other hand, the map \eqref{kv1k0} applied to $\Sigma A$ gives
\begin{equation}\label{map:kv1sigmak-1omega}
KV_1(\Sigma A)\overset\partial\to K_0(\Omega\Sigma A)=K_0(\Sigma \Omega A)=K_{-1}(\Omega A)
\end{equation}
One checks, by tracking down boundary maps, (see the proof of \cite[8.1.1]{biva}) that the composite of \eqref{map:k0kv1sigma} with \eqref{map:kv1sigmak-1omega}
is the map \eqref{map:delukh}:
\begin{equation}\label{diag:khkh}
\xymatrix{K_0(A)\ar[rr]^{\eqref{map:delukh}}\ar[dr]_{\eqref{map:k0kv1sigma}}&& K_{-1}(\Omega A)\\
                        & KV_1(\Sigma A)\ar[ur]_{\eqref{map:kv1sigmak-1omega}}&}
\end{equation}
On the other hand,  \eqref{map:kv1sigmak-1omega} followed by \eqref{map:k0kv1sigma} applied
to $\Sigma\Omega A$ yields a map
\[
KV_1(\Sigma A)\to KV_1(\Sigma^2\Omega A)=KV_2(\Sigma^2A).
\]
Iterating this map one obtains an inductive system; by \eqref{diag:khkh}, we get
\[
KH_0(A)=\colim_rKV_1(\Sigma^{r+1}\Omega^r A)=\colim_rKV_r(\Sigma^{r}A)
\]
and in general,
\begin{equation}\label{khkv}
KH_n(A)=\colim_r KV_1(\Sigma^{r+1}\Omega^{n+r}A)=\colim_rKV_{n+r}(\Sigma^{r}A).
\end{equation}
Next we list some of the basic properties of $KH$, proved by Weibel in \cite{wh}.
\begin{them}\label{thm:khppties} (\cite{wh})
Homotopy $K$-theory has the following properties.
\begin{itemize}
\item[i)] It is homotopy invariant, nilinvariant and $M_\infty$-stable.
\item[ii)] It satisfies excision: to the sequence \eqref{abc} there corresponds a long exact sequence ($n\in\Z$)
\[
KH_{n+1}C\to KH_nA\to KH_nB\to KH_nC\to KH_{n-1}A.
\]
\end{itemize}
\end{them}
\begin{proof}
From \eqref{khkv} and the fact that, by \ref{prop:kv1ppties}, $KV$ is homotopy invariant, it follows that $KH$ is homotopy invariant.
Nilinvariance, $M_\infty$-stability and excision for $KH$ follow from
the fact that (by \ref{prop:kneg}) these hold for nonpositive $K$-theory, using the formulas \eqref{kh0}, \eqref{khn}.\qed
\end{proof}

\begin{exe}\label{exe:htpinv}
Note that in the proof of \ref{thm:khppties}, the formula \eqref{khkv} is used only for homotopy invariance.
Prove that $KH$ is homotopy invariant without using \eqref{khkv}, but using excision instead. Hint: show
that the excision map $KH_*(A)\to KH_{*-1}(\Omega A)$ coming from the sequence \eqref{lopez} is an isomorphism.
\end{exe}
\begin{exe} Show that $KH$ commutes with filtering colimits.
\end{exe}
\subsection{$KH$ for $K_0$-regular rings.}

\begin{lem}\label{lem:pareg}
Let $A$ be a ring. Assume that $A$ is $K_n$-regular for all $n\le 0$. Then $KV_1(A)\to K_0(\Omega A)$ is an isomorphism, and for $n\le 0$,
$K_n(PA)=0$, $PA$ and $\Omega A$ are $K_n$-regular, and $K_nA\to K_{n-1}\Omega A$ an isomorphism.
\end{lem}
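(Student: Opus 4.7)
The plan is to exploit the split exact sequence $0\to PA\to A[t]\overset{\ev_0}\to A\to 0$, the excision long exact sequence from Theorem \ref{thm:exci01} and Proposition \ref{prop:kneg} applied to the path-loop sequence $0\to \Omega A\to PA\overset{\ev_1}\to A\to 0$, and the factorization \eqref{kv1k0} of the connecting map $K_1A\to K_0\Omega A$ through $KV_1A$. The $K_n$-regularity hypothesis will be used only to kill $K_n(PA)$ for $n\le 0$; everything else then drops out of the long exact sequence.

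The first step is the vanishing $K_n(PA)=0$ for $n\le 0$. Since $A\hookrightarrow A[t]$ splits $\ev_0$, additivity yields $K_n(A[t])=K_n(A)\oplus K_n(PA)$, and $K_n$-regularity of $A$ forces the second summand to vanish. Replacing $A$ by $A[s_1,\dots,s_m]$ (which inherits $K_n$-regularity) and noting that $PA[s_1,\dots,s_m]=P(A[s_1,\dots,s_m])$, the same argument shows $K_n(PA[s_1,\dots,s_m])=0$, so $PA$ is $K_n$-regular. Feeding this into the excision LES of the path-loop sequence: for $n\le 0$ both $K_n(PA)$ and $K_{n-1}(PA)$ vanish, whence the connecting map $K_nA\to K_{n-1}\Omega A$ is an isomorphism.

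The isomorphism $KV_1(A)\to K_0(\Omega A)$ then follows. Injectivity is already recorded just before the lemma in the excerpt (using $KV_1(PA)=0$, by homotopy invariance of $KV_1$). For surjectivity, the LES ends with $K_1(A)\overset{\partial}\to K_0(\Omega A)\to K_0(PA)=0$, so $\partial$ is onto; since by \eqref{kv1k0} it factors as $K_1(A)\twoheadrightarrow KV_1(A)\to K_0(\Omega A)$, the right-hand map is onto as well. The $K_n$-regularity of $\Omega A$ for $n\le -1$ then follows from $\Omega A[s_1,\dots,s_m]=\Omega(A[s_1,\dots,s_m])$ together with the d\'ecalage isomorphism of the previous paragraph (applied to both $A$ and $A[s_1,\dots,s_m]$), and the $n=0$ case reads
\[
K_0(\Omega A[s_1,\dots,s_m])=KV_1(A[s_1,\dots,s_m])=KV_1(A)=K_0(\Omega A)
\]
by homotopy invariance of $KV_1$ (Proposition \ref{prop:kv1ppties} iv)).

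The one conceptual hinge is the compatibility between the LES boundary $K_1A\to K_0\Omega A$ and its factorization \eqref{kv1k0} through $KV_1A$; this is essentially built into the construction via Proposition \ref{prop:kv1ppties} iii), but it is the step where one cannot be purely formal. The rest of the argument is bookkeeping with the long exact sequence and the observation that polynomial rings over $A$ also satisfy the $K_n$-regularity hypothesis.
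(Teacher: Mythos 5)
Your proposal is correct and follows essentially the same route as the paper's proof: both derive $K_n(PA)=0$ and $K_n$-regularity of $PA$ from split exactness and the hypothesis, feed this into the long exact sequence of the path-loop extension to obtain the d\'ecalage isomorphism $K_nA\cong K_{n-1}\Omega A$ and the isomorphism $KV_1(A)\cong K_0(\Omega A)$, and deduce regularity of $\Omega A$ from naturality of these isomorphisms together with homotopy invariance of $KV_1$. The only cosmetic difference is that you spell out the surjectivity of $KV_1(A)\to K_0(\Omega A)$ via the factorization \eqref{kv1k0}, whereas the paper folds this into the assertion that the displayed squares have exact rows.
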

\begin{proof}
Consider the split exact sequence
\[
\xymatrix{0\ar[r]&PA[t_1,\dots,t_r]\ar[r]&A[s,t_1,\dots,t_r]\ar[r]&A[t_1,\dots,t_r]\ar[r]&0}
\]
Applying $K_n$ ($n\le 0$) and using that $K_n$ is split exact and that, by hypothesis, $A$ is $K_n$-regular, we get that $K_n(PA[t_1,\dots,t_r])=0$.
As this happens for all $r\ge 0$, $PA$ is $K_n$-regular. Hence the map
of exact sequences
\[
\xymatrix{0\ar[r]&\Omega A\ar[d]\ar[r]&PA\ar[d]\ar[r]&A\ar[d]\ar[r]&0\\
          0\ar[r]&\Omega A[t_1,\dots,t_r]\ar[r]&PA[t_1,\dots,t_r]\ar[r]&A[t_1,\dots,t_r]\ar[r]&0}
\]
induces commutative squares with exact rows
\[
\xymatrix{0\ar[r]&KV_1(A)\ar[d]\ar[r]&K_0(\Omega A)\ar[d]\ar[r]&0\\
          0\ar[r]&KV_1(A[t_1,\dots,t_r])\ar[r]&K_0(\Omega A[t_1,\dots,t_r])\ar[r]&0}
\]
and
\[
\xymatrix{0\ar[r]&K_n(A)\ar[d]\ar[r]&K_{n-1}(\Omega A)\ar[d]\ar[r]&0\\
          0\ar[r]&K_n(A[t_1,\dots,t_r])\ar[r]&K_{n-1}(\Omega A[t_1,\dots,t_r])\ar[r]&0}\qquad (n\le 0)
\]
By Proposition \ref{prop:kv1ppties} and our hypothesis, the first vertical map in each diagram is an isomorphism; it follows that the second is also an isomorphism.\qed
\end{proof}
\begin{rem}\label{rem:vorst} A theorem of Vorst \cite{vorst} implies that if $A$ is $K_0$-regular then it is
$K_n$-regular for all $n\le 0$. Thus the lemma above holds whenever $A$ is $K_0$-regular. The statement of
Vorst's theorem is that, for Quillen's $K$-theory, and $n\in\Z$, a $K_n$-regular unital ring is also
$K_{n-1}$-regular. (In his paper, Vorst states this only for commutative rings, but his proof works in general).
For $n\le 0$, Vorst's theorem extends to all, not necessarily unital rings. To see this, one shows first, using the fact that
$\Z$ is $K_n$-regular (since it is noetherian regular), and split exactness, that $A$ is $K_n$-regular if and only if $\tilde{A}$ is.
Now Vorst's theorem applied to $\tilde{A}$ implies that if $A$ is $K_n$-regular then it is $K_{n-1}$-regular
$(n\le 0)$.
\end{rem}

\begin{propo}\label{prop:khk0reg}
If $A$ satisfies the hypothesis of Lemma \ref{lem:pareg}, then
\[
KH_n(A)=\begin{cases} KV_n(A) & n\ge 1\\
                      K_n(A) & n\le 0\end{cases}
\]
\end{propo}
\begin{proof} By the lemma, $KV_{n+1}(A)=KV_1(\Omega^nA)\to K_0(\Omega^{n+1}A)$ and $K_{-n}(\Omega^pA)\to K_{-n-1}(\Omega^{p+1}A)$ are isomorphisms
for all $n,p\ge 0$.\qed
\end{proof}

\subsection{Toeplitz ring and the fundamental theorem for $KH$.}
Write $\cT$ for the free unital ring on two generators $\alpha$, $\alpha^*$ subject to $\alpha\alpha^*=1$. Mapping $\alpha$
to $\sum_ie_{i,i+1}$ and $\alpha^*$ to $\sum_ie_{i+1,i}$ yields a homomorphism $\cT\to\Gamma:=\Gamma\Z$ which is injective
(\cite[Proof of 4.10.1]{biva}); we identify $\cT$ with its image in $\Gamma$. Note
\begin{equation}\label{eijintoep}
{\alpha^*}^{p-1}\alpha^{q-1}-{\alpha^*}^{p}\alpha^{q} = e_{p,q}\qquad (p,q\ge 1).
\end{equation}
 Thus $\cT$ contains
the ideal $M_\infty:=M_\infty\Z$. There is a commutative diagram with exact rows and split exact columns:
\[
\xymatrix{0\ar[r]&M_\infty\ar@{=}[d]\ar[r]&\cT_0\ar[d]\ar[r]& \sigma\ar[d]\ar[r]&0\\
          0\ar[r]&M_\infty\ar[r]&\cT\ar[d]\ar[r]& \Z[t,t^{-1}]\ar[d]^{\ev_1}\ar[r]&0\\
   &&\Z\ar@{=}[r]&\Z&}
\]
Here the rings $\cT_0$ and $\sigma$ of the top row are defined so that the columns be exact. Note moreover that the rows are split as sequences of abelian groups. Thus tensoring with any ring $A$ yields an exact diagram
\begin{equation}\label{toepalg}
\xymatrix{0\ar[r]&M_\infty A\ar@{=}[d]\ar[r]&\cT_0A\ar[d]\ar[r]& \sigma A\ar[d]\ar[r]&0\\
          0\ar[r]&M_\infty A\ar[r]&\cT A\ar[d]\ar[r]& A[t,t^{-1}]\ar[d]\ar[r]&0\\
   & &A\ar@{=}[r]&A&}
\end{equation}
Here we have omitted tensor products from our notation; thus $\cT A=\cT\otimes A$, $\sigma A=\sigma\otimes A$,
and $\cT_0A=\cT_0\otimes A$.
We have the following algebraic analogue of Cuntz' theorem.
\begin{them}\label{thm:algcuntz} (\cite[7.3.2]{biva})
Let $G$ be a functor from rings to abelian groups. Assume that:
\begin{itemize}
\item $G$ is homotopy invariant.
\item $G$ is split exact.
\item $G$ is $M_\infty$-stable.
\sn Then for any ring $A$, $G(\cT_0A)=0$.\qed
\end{itemize}
\end{them}

\begin{them}\label{thm:decalkh}
Let $A$ be a ring and $n\in\Z$. Then
\[
KH_n(\sigma A)=KH_{n-1}(A).
\]
\end{them}
\begin{proof}
By Theorem \ref{thm:khppties}, $KH$ satisfies excision. Apply this to the top row
of diagram \eqref{toepalg} and use Theorem \ref{thm:algcuntz}.\qed
\end{proof}

The following result of Weibel \cite[1.2 iii)]{wh} is immediate from the theorem above.

\begin{coro}\label{coro:khfund}(Fundamental theorem for $KH$, \cite[1.2 iii)]{wh}).
$KH_n(A[t,t^{-1}])=KH_n(A)\oplus KH_{n-1}(A)$.\qed
\end{coro}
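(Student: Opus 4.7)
The plan is to deduce the corollary directly from Theorem \ref{thm:decalkh} by using the split exactness of $KH$. Concretely, the right-hand column of diagram \eqref{toepalg} gives a short exact sequence of rings
\[
0 \to \sigma A \to A[t,t^{-1}] \overset{\ev_1}\to A \to 0,
\]
and this sequence is split by the unital inclusion $A \to A[t,t^{-1}]$ sending $a \mapsto a$ (the same splitting that makes the right column of \eqref{toepalg} split exact).

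Next I would invoke Theorem \ref{thm:khppties}, which gives a long exact sequence in $KH$ associated to any short exact sequence of rings. Because $KH$ satisfies excision and the sequence above is split, the long exact sequence breaks into split short exact sequences in each degree, yielding a natural isomorphism
\[
KH_n(A[t,t^{-1}]) \;\cong\; KH_n(\sigma A) \oplus KH_n(A)
\]
for every $n\in\Z$.

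Finally, applying Theorem \ref{thm:decalkh} to replace $KH_n(\sigma A)$ with $KH_{n-1}(A)$ yields
\[
KH_n(A[t,t^{-1}]) \;\cong\; KH_{n-1}(A) \oplus KH_n(A),
\]
which is the desired formula. There is no real obstacle here: the work has all been done in Theorem \ref{thm:decalkh} (which itself depends on the Toeplitz diagram \eqref{toepalg} and the algebraic Cuntz theorem \ref{thm:algcuntz}); the corollary is only the observation that the bottom row of that diagram is split exact, so combining $KH$-excision with the d\'ecalage isomorphism $KH_n(\sigma A) = KH_{n-1}(A)$ produces the fundamental theorem.
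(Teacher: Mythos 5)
Your proof is correct and is exactly the argument the paper has in mind: the paper simply declares the corollary "immediate from the theorem above," and you have spelled out what that means — the split exact sequence $0 \to \sigma A \to A[t,t^{-1}] \to A \to 0$ from the right column of \eqref{toepalg} together with the $KH$-excision sequence of Theorem \ref{thm:khppties} gives $KH_n(A[t,t^{-1}]) \cong KH_n(\sigma A)\oplus KH_n(A)$, and Theorem \ref{thm:decalkh} replaces $KH_n(\sigma A)$ by $KH_{n-1}(A)$. (One trivial wording point: the splitting $a\mapsto a$ is just the canonical inclusion of constants, not a unital map, since $A$ need not be unital.)
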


\begin{rem}\label{rem:khfund}
We regard Theorem \ref{thm:decalkh} as an
algebraic analogue of Bott periodicity for $KH$. What is missing in the algebraic case is an analogue
of the exponential map; there is no isomorphism
$\Omega A\to \sigma A$.
\end{rem}

\begin{rem}\label{rem:kvnotfund}
We have shown in Proposition \ref{prop:kv1ppties} that $KV_1$ satisfies the hypothesis of Theorem
\ref{thm:algcuntz}. Thus $KV_1(\cT_0A)=0$ for every ring $A$. However, there is no natural isomorphism $KV_1(\sigma A)=K_0(A)$. Indeed,
since $KV_1(\sigma PA)=KV_1(P\sigma A)=0$ the existence of such an isomorphism would imply that $K_0(PA)=0$, which in turn,
given the fact that $K_0$ is split exact, would imply that $K_0(A[t])=K_0(A)$, a formula which does not hold for general
$A$ (see Example \ref{exa:k0reg}).
\end{rem}
We finish the section with a technical result which will be used in Subsection \ref{subsec:fundatoep}.
\begin{propo}\label{prop:atotau=0}
Let $\fD$ be an additive category, $F:\ass\to \fD$ an additive functor, and $A$ a ring. Assume $F$ is $M_\infty$-stable on $A$ and $M_2$-stable
on both $\cT A$
and $M_2(\cT A)$. Then $F(M_\infty A\to \cT A)$ is the zero map.
\end{propo}
\begin{proof}
Because $F$ is $M_\infty$-stable on $A$, it suffices to show that $F$ sends the inclusion $\j:A\to \cT A$, $a\mapsto ae_{11}$,
to the zero map. Note that $e_{11}=1-\alpha^*\alpha$, by \eqref{eijintoep}. Consider the inclusion $\j^{\infty}:A\to \cT A$,
$\j^{\infty}(a)=a\cdot 1=\diag(a,a,a,\dots)$. One checks that the following matrix
\[
Q=\left[\begin{matrix}1-\alpha^*\alpha & &\alpha^*\\\alpha & & 0\end{matrix}\right]\in \GL_2\cT(\tilde{A})
\]
satisfies $Q^2=1$ and
\[
Q\left[\begin{array}{cc}\j(a)&0\\0&\j^\infty(a)\end{array}\right]Q=\left[\begin{array}{cc}\j^\infty(a)&0\\0&0\end{array}\right].
\]
Since we are assuming that $F$ is additive and $M_2$-stable on both $\cT A$ and $\M_2\cT A$, we may now apply Exercise
\ref{exe:add} and Proposition \ref{prop:vw} to deduce the following identity between elements of the group $\hom_\fD(F(A),F(\cT A))$:
\[
\j+\j^{\infty}=\j^{\infty}.
\]
It follows that $\j=0$, as we had to prove.\qed
\end{proof}

\begin{exe}\label{exe:gl0notepi}
Deduce from Remark \ref{rem:kvnotfund} and propositions \ref{prop:atotau=0} and \ref{prop:kv1ppties} iii)
that the canonical maps $\GL(\cT_0 A)'_0\to \GL(\sigma A)'_0$ and $\GL(\cT A)'_0\to \GL(A[t,t^{-1}])'_0$
are not surjective in general.
\end{exe}

\section{Quillen's Higher $K$-theory}\label{sec:kq}

Quillen's higher $K$-groups of a unital ring $R$ are defined as the homotopy groups of a certain $CW$-complex;
the plus construction of the classifying space of the group $\GL(R)$ \cite{qplus}. The latter construction is defined more generally
for $CW$-complexes, but we shall not go into this general version; for this and other matters connected with the plus
construction approach to $K$-theory, the interested reader should consult standard references such as Jon Berrick's book
\cite{berr}, or the papers of Loday \cite{lorep}, and Wagoner \cite{rosen}. We shall need
a number of basic facts from algebraic topology, which we shall presently review. First of all we recall that if $X$
and $Y$ are $CW$-complexes, then the cartesian product $X\times Y$, equipped with the product topology, is not a $CW$-complex
in general. That is, the category of $CW$-complexes is not closed under finite products in $\topo$. On the other hand,
any $CW$-complex is a compactly generated --or Kelley-- space, and the categorical product of two $CW$-complexes
in the category $Ke$ of compactly generated spaces is again $CW$, and also has the cartesian product $X\times Y$ as
underlying set. Moreover, in case the product topology in $X\times Y$ happens to be compactly generated, then it
agrees with that of the product in $Ke$. In these notes, we write $X\times Y$ for the cartesian product equipped
with its compactly generated topology. (For a more detailed treatment of the categorical properties of $Ke$ see \cite{gs}).

\mn

\subsection{Classifying spaces.}
The {\it classifying space} of a (discrete) group $G$ is a pointed connected $CW$-complex $BG$ such that
\[
\pi_nBG=\left\{\begin{array}{cc}G& n=1\\ 0 &n\ne 1\end{array}\right.
\]
This property characterizes $BG$ and makes it functorial up to homotopy. Further there are various strictly functorial models
for $BG$ (\cite[Ch. 5\S1]{rosen}, \cite[1.5]{goja}). We choose the model coming from the realization of the simplicial nerve of $G$
(\cite{goja}), and write $BG$ for that model. Here are some basic
properties of $BG$ which we shall use.
\begin{properties}\label{fact:bg}
\item{i)} If
\[
1\to G_1\to G_2\to G_3\to 1
\]
is an exact sequence of groups, then $BG_2\to BG_3$ is a fibration with fiber $BG_1$.
\item{ii)} If $G_1$ and $G_2$ are groups, then the map $B(G_1\times G_2)\to BG_1\times BG_2$ is a homeomorphism.

\item{iii)} The homology of $BG$ is the same as the group homology of $G$; if $M$ is
$\pi_1BG=G$-module, then
\[
H_n(BG,M)=H_n(G,M):=\tor^{\Z G}_n(\Z,M)
\]
\smartqed\qed
\end{properties}
\subsection{Perfect groups and the plus construction for $BG$.}
A group $P$ is called {\it perfect} if its abelianization is trivial, or equivalently, if $P=[P,P]$. Note that a group
$P$ is perfect if and only if the functor $\hom_{\Grp}(P,-):\ab\to \ab$ is zero. Thus the full subcategory $\subset \Grp$ of
all perfect groups is closed both under colimits and under homomorphic images. In particular, if $G$ is group, then the directed set of all
perfect subgroups of $G$ is filtering, and its union is again a perfect subgroup $N$, the maximal perfect subgroup of $G$.
Since the conjugate of a perfect subgroup is again perfect, it follows that $N$ is normal in $G$. Note that $N\subset [G,G]$;
if moreover the equality holds, then we say that $G$ is {\it quasi-perfect}. For example, if $R$ is a unital ring, then
$\GL R$ is quasi-perfect, and $ER$ is its maximal perfect subgroup (\cite[2.1.4]{rosen}).
Quillen's {\it plus construction} applied to the group $G$ yields a cellular map of $CW$-complexes
$\iota:BG\to (BG)^+$ with the following properties (see \cite[1.1.1, 1.1.2]{lorep}).
\begin{itemize}
\item[i)] At the level of $\pi_1$, $\iota$ induces the projection $G\fib G/N$. 
\item[ii)] At the level of homology, $\iota$ induces an isomorphism $H_*(G,M)\to H_*((BG)^+,M)$ for each $G/N$-module $M$.
\item[iii)] If $BG\to X$ is any continuous function which at the level of $\pi_1$ maps $N\to 1$, then the dotted arrow in the following
diagram exists and is unique up to homotopy
\[
\xymatrix{BG\ar[r]^\iota\ar[d]&(BG)^+\ar@{.>}[dl]\\X&}
\]
\item[iv)] Properties i) and iii) above characterize $\iota:BG\to BG^+$ up to homotopy.
\end{itemize}
From the universal property, it follows that if $f:BG_1\to BG_2$ is a continuous map, then there is a (continuous) map
$BG_1^+\to BG_2^+$, unique up to homotopy, which makes the following diagram commute
\[
\xymatrix{BG_1\ar[d]\ar[r]^f&BG_2\ar[d]\\ BG_1^+\ar[r]^{f^+}&BG_2^+}
\]

\begin{properties}\label{facts:bg+}(\cite[1.1.4]{lorep})
\sn
\item{i)} If $G_1$ and $G_2$ are groups, and $\pi_i:B(G_1\times G_2)\to BG_i$ is the projection, then the map
$(\pi_1^+,\pi_2^+):B(G_1\times G_2)^+\to BG_1^+\times BG_2^+$ is a homotopy equivalence.
\item{ii)} The map $BN^+\to BG^+$ is the universal classifying space of $BG^+$.\qed
\end{properties}

If
\begin{equation}\label{seq:g123}
1\to G_1\to G_2\overset{\pi}\to G_3\to 1
\end{equation}
is an exact sequence of groups, then we can always choose $\pi^+$ to be
a fibration; write $F$ for its fiber. If the induced map $G_1\to \pi_1F$ kills the maximal perfect subgroup $N_1$ of $G_1$,
then $BG_1\to F$ factors through a map
\begin{equation}\label{map:fibplus}
BG_1^+\to F
\end{equation}

\begin{propo}\label{prop:fibplus}
Let \eqref{seq:g123} be an exact sequence of groups. Assume that
\sn
\item{i)} $G_1$ is quasi-perfect and $G_2$ is perfect.
\item{ii)}$G_3$ acts trivially on $H_*(G_1,\Z)$.
\item{iii)}$\pi_1F$ acts trivially on $H_*(F,\Z)$.\goodbreak
\sn
Then the map \eqref{map:fibplus} is a homotopy equivalence.
\end{propo}
\begin{proof}
Consider the map of fibration sequences
\[
\xymatrix{BG_1\ar[d]\ar[r]&BG_2\ar[d]\ar[r]&BG_3\ar[d]\\
           F\ar[r]&BG_2^+\ar[r]& BG_3^+}
\]
By the second property of the plus construction listed above, the maps $BG_i\to BG_i^+$ are homology equivalences.
 For $i\ge 2$, we have, in addition, that $G_i$ is perfect, so $BG_i^+$ is simply connected and $F$ is connected with
abelian $\pi_1$, isomorphic to $\coker(\pi_2BG_2^+\to\pi_2BG_3^+)$. Hence $\pi_1F\to H_1F$ is an isomorphism, by
Poincar\'e's theorem. All this together with the Comparison Theorem (\cite{zee}),
imply that $BG_1\to F$ and thus also \eqref{map:fibplus}, are homology equivalences. Moreover, because $G_1$ is
quasi-perfect by hypothesis, the Hurewicz map $\pi_1BG_1^+\to H_1BG_1^+$ is an isomorphism, again by Poincar\'e's
theorem. Summing up, $BG_1^+\to F$ is a homology isomorphism which induces an isomorphism of fundamental groups;
since $\pi_1F$ acts trivially on $H_*F$ by hypothesis, this implies that \eqref{map:fibplus} is a weak equivalence
(\cite[4.6.2]{brow}).\qed
\end{proof}

\begin{lem}\label{lem:fibplus}
Let \eqref{seq:g123} be an exact sequence of groups. Assume that for every $g\in G_2$ and every finite set $h_1,\dots,h_k$ of elements
of $G_1$, there exists an $h\in G_1$ such that for all $i$, $gh_ig^{-1}=hh_ih^{-1}$. Then $G_3$ acts trivially on $H_*(G_1,\Z)$.
\end{lem}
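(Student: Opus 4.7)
The plan is to reduce the question, via the bar complex, to the classical fact that inner automorphisms of a group act trivially on its homology. The finitary hypothesis on the $G_2$-action is designed precisely to let us replace, on any prescribed cycle, conjugation by an element of $G_2$ with conjugation by an element of $G_1$.

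First I would recall how the $G_3$-action on $H_*(G_1,\Z)$ is defined in the first place. Since $G_1\triqui G_2$, each $g\in G_2$ induces the conjugation automorphism $c_g\in\mathrm{Aut}(G_1)$, and hence an endomorphism $(c_g)_*$ of $H_*(G_1,\Z)$. It is standard (see e.g.\ \cite[II.6.2]{brow}) that if $h\in G_1$, then $(c_h)_*=1$ on $H_*(G_1,\Z)$; consequently the assignment $g\mapsto (c_g)_*$ factors through $G_2/G_1=G_3$ and defines the action in question.

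Next I would represent a homology class concretely. Using the normalized bar complex $C_n=\Z[G_1^n]$, any class $\xi\in H_n(G_1,\Z)$ is represented by a cycle
\[
z=\sum_{\alpha}n_\alpha\,[h_1^\alpha|\cdots|h_n^\alpha],
\]
a finite sum involving only finitely many elements $h_i^\alpha\in G_1$. Let $S\subset G_1$ be the (finite) set of all these $h_i^\alpha$. The chain-level action of $c_g$ is given by $[h_1|\cdots|h_n]\mapsto [gh_1g^{-1}|\cdots|gh_ng^{-1}]$.

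Finally I would apply the hypothesis to $g$ and $S$: there exists $h\in G_1$ such that $gh_i^\alpha g^{-1}=hh_i^\alpha h^{-1}$ for every $h_i^\alpha\in S$. Then $c_g$ and $c_h$ agree on every element of $S$, so they send $z$ to the \emph{same} chain in $C_n$; in particular $(c_g)_*\xi=(c_h)_*\xi$ in $H_n(G_1,\Z)$. Since $(c_h)_*=1$ by the initial remark, $(c_g)_*\xi=\xi$. As $\xi$ and $g$ were arbitrary, $G_3$ acts trivially on $H_*(G_1,\Z)$. The only non-obstacle worth flagging is bookkeeping: one must choose $h$ \emph{after} fixing the cycle, which is why the hypothesis is formulated with a finite subset $\{h_1,\dots,h_k\}$ rather than globally.
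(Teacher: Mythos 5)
Your proposal is correct and follows essentially the same route as the paper: fix a cycle in the bar complex, use the finiteness hypothesis to replace conjugation by $g\in G_2$ with conjugation by some $h\in G_1$ on the finitely many group elements appearing in that cycle, and then invoke triviality of inner automorphisms on group homology. You merely spell out the details that the paper delegates to references in \cite{chubu}.
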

\begin{proof}
If $g\in G_2$ maps to $\bar{g}\in G_3$, then the action of $\bar{g}$ on $H_*(G_1,\Z)$ is that induced by conjugation by $g$.
The hypothesis implies that the action of $g$ on any fixed cycle of the standard bar complex which computes $H_*(BG_1,\Z)$
(\cite[6.5.4]{chubu}) coincides with the conjugation action by an element of $G_1$, whence it is trivial (\cite[6.7.8]{chubu}).\qed
\end{proof}

Quillen's higher $K$-groups of a unital ring $R$ are defined as the homotopy groups of $(B\GL R)^+$; we put
\begin{align*}
K(R):&=(B\GL R)^+\\
K_nR:&=\pi_nK (R)\qquad (n\ge 1).
\end{align*}
In general, for a not necessarily unital ring $A$, we put
\[
K(A):=\mathrm{fiber}(K(\tilde{A})\to K(\Z)), \qquad K_n(A)=\pi_nK(A)\qquad (n\ge 1)
\]
One checks, using \ref{facts:bg+} i), that when $A$ is unital, these definitions agree with the previous ones.
\begin{rem} As defined, $K$ is a functor from $\ass$ to the homotopy category of topological spaces $\joto$.
Further note that for $n=1$ we recover the definition of $K_1$ given in \ref{defi:k01}. \end{rem}

We shall see below that the main basic properties of Section \ref{sec:basick01} which hold for $K_1$ hold also for
higher $K_n$ (\cite{lorep}). First we need some preliminaries. If $W:\N\to \N$ is an injection, we shall identify
$W$ with the endomorphism $\Z^{(\N)}\to \Z^{(\N)}$, $W(e_i)=e_{w(i)}$ and also with the matrix of the latter in the
canonical basis, given by  $W_{ij}=\delta_{i,w(j)}$. Let $V=W^t$ be the transpose matrix; then $VW=1$. If now $R$ is
a unital ring, then the endomorphism $\psi^{V,W}:M_\infty R\to M_\infty R$ of \ref{prop:vw} induces a group
endomorphism $\GL(R)\to\GL(R)$, which in turn yields homotopy classes of
maps
\begin{equation}\label{map:psi+}
\psi:K(R)\to K(R),\qquad \psi':BE(R)^+\to BE(R)^+.
\end{equation}

\begin{lem}\label{lem:psitriv} (\cite[1.2.9]{lorep}) The maps \eqref{map:psi+} are homotopic to the identity.\qed
\end{lem}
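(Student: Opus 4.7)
The idea is that $\psi = \psi^{V,W}$ is ``locally conjugation by a permutation'', and inner automorphisms act trivially on $BG^+$ up to homotopy.

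First, I would establish that for each $n \ge 1$, there exists a permutation matrix $\sigma_n \in \GL_N R$ (with $N = \max\{n, w(1), \dots, w(n)\}$) such that the restriction of $\psi$ to $\GL_n R$ equals conjugation by $\sigma_n$. Concretely, extend $w|_{\{1,\dots,n\}}$ to a bijection $\sigma_n$ of $\{1,\dots,N\}$ with itself, viewed as a permutation matrix. For $g \in \GL_n R$, writing $g - 1 \in M_n R$ and using $VW = 1$, the formula $\psi(g) = W(g-1)V + 1 = WgV + (1-WV)$ matches $\sigma_n g \sigma_n^{-1}$ by a direct index-by-index calculation (both equal $g_{w^{-1}(i), w^{-1}(j)}$ on $w(\{1,\dots,n\})^2$ and $\delta_{ij}$ elsewhere).

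From this local picture I would deduce that $\psi$ acts as the identity on both $K_1 R = (\GL R)^{ab}$ and $H_*(\GL R; \Z)$. The $K_1$ claim is immediate since conjugation is trivial on abelianization. For homology, any cycle in the bar complex of $\GL R$ uses only finitely many elements, all in some $\GL_n R$; applying $\psi$ to such a cycle is identical to conjugating by the single element $\sigma_n \in \GL R$, which is trivial on $H_*(\GL R;\Z)$ by the standard fact \cite[6.7.8]{chubu} that inner automorphisms act trivially on group homology. The same reasoning handles $\psi'$ on $BE(R)^+$: the computation $\psi(e_{ij}(r)) = e_{w(i), w(j)}(r)$ shows $\psi$ preserves $E(R)$, and by enlarging $N$ if necessary $\sigma_n$ may be chosen even, hence expressible as a product of elementary matrices.

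To conclude $\psi \simeq \mathrm{id}$ as self-maps of $K(R) = B\GL R^+$ (respectively of $BE(R)^+$), I would assemble the local conjugation homotopies using the standard fact that for any $\sigma \in G$ the map $Bc_\sigma: BG \to BG$ is freely homotopic to the identity, with a homotopy realized by the $1$-cell $\sigma$ in $BG$. Thus on each $B\GL_n R$ the two composites via $\psi$ and via the inclusion are homotopic as maps into $B\GL R$, and \emph{a fortiori} into $B\GL R^+$. The main obstacle is that these local homotopies depend on $\sigma_n$ and need not be coherent as $n$ varies; this is overcome by invoking the universal property of the plus construction together with the simplicity of $B\GL R^+$ (which, being a connected component of an infinite loop space, is an $H$-space), reducing the existence of a global self-homotopy to the already-established triviality on $\pi_1$ and $H_*$ via obstruction theory for maps into simple spaces.
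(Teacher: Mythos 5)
Your local analysis is correct: for each $n$ the endomorphism $\psi$ restricted to $\GL_n R$ coincides with conjugation by a permutation matrix $\sigma_n\in\GL_N R$ extending $w|_{\{1,\dots,n\}}$, and since any cycle in the bar complex of $\GL R$ involves only finitely many elements, this forces $\psi_*=\mathrm{id}$ on $H_*(\GL R;\Z)$; triviality on $K_1R$ is immediate. This part is essentially the argument used in Lemma \ref{lem:fibplus}. (The paper does not reprove the lemma; it cites Loday \cite[1.2.9]{lorep}.)

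The last step, however, has a genuine gap. Agreement of a self-map with the identity on $\pi_1$ and on integral homology does \emph{not} imply that the map is homotopic to the identity, even for a simple target and even ``by obstruction theory.'' For two maps $f,g\colon X\to Y$ with $Y$ simple which agree on $\pi_1$, the successive difference obstructions to a homotopy $f\simeq g$ lie in $H^n(X;\pi_n Y)$ and are \emph{not} determined by the induced maps $f_*,g_*$ on homology; in the case at hand they lie in $H^n(\GL R;K_nR)$ and there is no a priori reason for them to vanish. What your homology computation does give, together with the homology Whitehead theorem for simple spaces, is that $\psi^+$ is a self-homotopy-equivalence of $K(R)$; this is strictly weaker than $\psi^+\simeq\mathrm{id}$.

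There is also a circularity problem with the appeal to simplicity and the $H$-space structure. In the paper's logical order, the $H$-group structure on $K(R)$ (Proposition \ref{prop:hikay} ii)) and the identification $K(R)\simeq\Omega_0 K(\Sigma R)$ (Proposition \ref{prop:conseq}), which are what you invoke to get that $K(R)$ is simple and an $H$-space, are themselves proved \emph{using} Lemma \ref{lem:psitriv}. To avoid the circle you would have to establish simplicity of $K(R)$ by independent means, and even then the obstruction-theoretic reduction above would remain invalid. A correct proof must produce a homotopy between $\iota\circ B\psi$ and $\iota\colon B\GL R\to K(R)$ more directly from the local conjugation data (this is what Loday's cited argument does), rather than try to deduce it from the induced maps on $\pi_1$ and $H_*$.
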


A proof of the previous lemma for the case of $\psi$ can be found in {\it loc. cit.}; a similar argument
works for $\psi'$.

\begin{propo}\label{prop:hikay}
Let $n\ge 1$ and let $R$ be a unital ring.
\sn
\item{i)} The functor $K_n:\ass_1\to \ab$ is additive.
\item{ii)} The direct sum $\oplus:\GL R\times \GL R\to \GL R$ of \eqref{directsum} induces a map $K(R)\times K(R)\to K(R)$
which makes $K(R)$ into an $H$-group, that is, into a group up to homotopy. Similarly, $BE(R)^+$ also has an $H$-group
structure induced by $\oplus$.

\item{iii)} The functors $K:\ass_1\to\joto$ and $K_n:\ass_1\to\ab$ are $M_\infty$-stable.
\end{propo}
\begin{proof}
Part i) is immediate from \ref{facts:bg+} i). The map of ii) is the composite of the homotopy inverse of the map of
\ref{facts:bg+} i) and the map $B\GL(\oplus)^+$. One checks that, up to endomorphisms of the form $\psi^{V,W}$
induced by injections $\N\to \N$, the map $\oplus:\GL(R)\times\GL(R)$ is associative and commutative and the
identity matrix is a neutral element for $\oplus$. Hence by \ref{lem:psitriv} it follows that $B\GL(R)^+$ is a
commutative and associative $H$-space. Since it is connected, this implies that it is an $H$-group, by
\cite[X.4.17]{white}. The same argument shows that $BE(R)^+$ is also an $H$-group. Thus ii) is proved. Let
$\iota:R\to M_\infty R$ be the canonical inclusion. To prove iii), one observes that a choice of bijection $\N\times
\N\to \N$ gives an isomorphism $\phi:M_\infty M_\infty R\iso M_\infty R$ such that the composite with
$M_\infty\iota$ is a homomorphism the form $\psi^{V,W}$ for some injection $W:\N\to\N$, whence the induced map
$K(R)\to K(R)$ is homotopic to the identity, by Lemma \ref{lem:psitriv}. This proves that $K(\iota)$ is a homotopy
equivalence.\qed
\end{proof}
\begin{coro}\label{coro:ksumtriv}
If $S$ is an infinite sum ring, then $K(S)$ is contractible.
\end{coro}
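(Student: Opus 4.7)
The plan is to reduce the contractibility of $K(S)$ to the vanishing of every homotopy group, and then invoke Proposition \ref{prop:sumring} (the infinite sum ring vanishing theorem) once for each $K_n$.

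First I would recall that $K(S)=(B\GL S)^+$ is a connected CW complex (any classifying space $BG$ of a discrete group is path-connected, and the plus construction preserves connectivity), so $\pi_0K(S)=0$ automatically. By Whitehead's theorem, it therefore suffices to show that $K_n(S)=\pi_nK(S)=0$ for every $n\ge 1$.

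Next I would verify, for each fixed $n\ge 1$, that $K_n:\ass_1\to \ab$ satisfies the hypotheses of Proposition \ref{prop:sumring} for the ring $S$. By Proposition \ref{prop:hikay}(i), $K_n$ is additive, which is exactly the condition that the sum of projections $K_n(S\times S)\to K_n(S)\oplus K_n(S)$ be an isomorphism. By Proposition \ref{prop:hikay}(iii), $K_n$ is $M_\infty$-stable; combined with Exercise \ref{exe:matrix}, this yields $M_2$-stability on any unital ring, in particular on both $S$ and $M_2S$. Applying Proposition \ref{prop:sumring} with $F=K_n$ and using that $S$ is an infinite sum ring, we conclude $K_n(S)=0$.

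Since this holds for every $n\ge 1$, and since $K(S)$ is a connected CW complex, it is weakly contractible and hence contractible. The main conceptual point is just the dictionary between the ``algebraic'' vanishing given by the infinite sum ring structure on $S$ and the ``topological'' vanishing of all homotopy groups of $(B\GL S)^+$; once additivity and $M_\infty$-stability of the higher $K$-groups (already established) are in hand, there is nothing further to do. No significant obstacle arises, provided one has internalized Proposition \ref{prop:sumring} and the reformulation of contractibility via Whitehead.
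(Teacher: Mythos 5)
Your argument is correct and is exactly the paper's intended proof: the paper's one-line proof cites Proposition \ref{prop:hikay}, Exercise \ref{exe:matrix}~ii), and Proposition \ref{prop:sumring}, which are precisely the ingredients you use, and you correctly supply the final step of passing from the vanishing of all $K_n(S)$ $(n\ge 1)$ and connectedness to contractibility via Whitehead's theorem. No gaps.
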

\begin{proof} It follows from the theorem above, using Exercise \ref{exe:matrix} ii) and Proposition \ref{prop:sumring}.\qed
\end{proof}

\begin{propo}\label{prop:conseq}
Let $R$ be a unital ring, $\Sigma R$ the suspension, $\Omega K(\Sigma R)$ the loopspace, and
$\Omega_0K(\Sigma R)\subset \Omega K(\Sigma R)$ the connected component
of the trivial loop. There is a homotopy equivalence
\[
K(R)\weq \Omega_0K(\Sigma R).
\]
\end{propo}
\begin{proof}
Consider the exact sequence of rings
\[
0\to M_\infty R\to \Gamma R\to \Sigma R\to 0
\]
Since $K_1(\Gamma R)=0$, we have
\[
\GL(\Gamma R)=E(\Gamma R),
\]
which applies onto $E\Sigma R$. Thus we have an exact sequence of groups
\[
1\to \GL M_\infty R\to \GL\Gamma R\to E\Sigma R\to 1
\]
One checks that the inclusion $\GL(M_\infty R)\to \GL(\Gamma R)$
satisfies the hypothesis of \ref{lem:fibplus} (see \cite[bottom of page 357]{wa} for details).
Thus the perfect group $E(\Sigma R)$ acts trivially on $H_*(\GL M_\infty R,\Z)$. On the other
hand, by \ref{prop:hikay} ii), both $K(\Gamma R)$ and $BE(\Sigma R)^+$ are $H$-groups, and moreover
since $\pi:\GL(\Gamma R)\to \GL(\Sigma R)$ is compatible with $\oplus$, the map $\pi^+:K(\Gamma R)\to BE(\Sigma R)^+$
can be chosen to be compatible with the induced operation. This implies that the fiber  of $\pi^+$ is a connected $H$-space
(whence an $H$-group) and so its fundamental group acts trivially on its homology. 
Hence by Propositions \ref{prop:hikay} iii) and \ref{prop:fibplus}, we have a homotopy fibration
\[
K(R)\to K(\Gamma R)\to BE(\Sigma R)^+
\]
By \ref{coro:ksumtriv}, the map
\begin{equation}\label{map:alverre}
\Omega BE(\Sigma R)^+\to K(R)
\end{equation}
is a homotopy equivalence. Finally, by \ref{facts:bg+} ii),
\begin{equation}\label{map:aldere}
\Omega BE(\Sigma R)^+\weq \Omega_0K(\Sigma R).
\end{equation}
Now compose \eqref{map:aldere} with a homotopy inverse of \eqref{map:alverre} to obtain the theorem.\qed
\end{proof}

\begin{coro}
For all $n\in \Z$, $K_n(\Sigma R)=K_{n-1}(R)$
\end{coro}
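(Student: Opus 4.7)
The plan is to split into three ranges of $n$ and in each case reduce to a result already available in the text.

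For $n \geq 2$, the statement follows immediately from Proposition \ref{prop:conseq} by taking homotopy groups. The equivalence $K(R) \weq \Omega_0 K(\Sigma R)$ gives, for $k \geq 1$,
\[
K_k(R) = \pi_k K(R) = \pi_k \Omega_0 K(\Sigma R) = \pi_{k+1} K(\Sigma R) = K_{k+1}(\Sigma R),
\]
using the standard identification $\pi_k \Omega_0 X = \pi_{k+1} X$ for $k \geq 1$. Setting $k = n-1$ yields the desired isomorphism.

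For $n = 1$, this is precisely the formula \eqref{decal} derived from the cone sequence $0 \to M_\infty R \to \Gamma R \to \Sigma R \to 0$ together with the vanishing $K_0(\Gamma R) = K_1(\Gamma R) = 0$. No further work is required beyond citing it.

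For $n \leq 0$, one unwinds the definition of negative $K$-theory from \ref{defi:kneg}: $K_m(A) = K_0(\Sigma^{-m} A)$ for $m \leq 0$. Then
\[
K_n(\Sigma R) = K_0(\Sigma^{-n}\Sigma R) = K_0(\Sigma^{-(n-1)} R) = K_{n-1}(R),
\]
which is tautological.

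The only nontrivial content is in the case $n \geq 2$, and even there the hard work has been absorbed into Proposition \ref{prop:conseq}; the potential obstacle is simply making sure the three definitions of $K_n$ (Quillen for $n \geq 1$, the original $K_0, K_1$, and Karoubi's suspension for $n \leq 0$) agree at the boundaries $n = 1$ and $n = 0$, which is ensured by the previously verified consistency of the definitions and the $\pi_1$-computation of $K(R)$ matching $K_1$ from \ref{defi:k01}.
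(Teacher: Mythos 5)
Your proof is correct and takes essentially the same route as the paper: split into the cases $n\le 0$ (tautology from Definition \ref{defi:kneg}), $n=1$ (formula \eqref{decal}), and $n\ge 2$ (take homotopy groups of the equivalence $K(R)\weq\Omega_0 K(\Sigma R)$ from Proposition \ref{prop:conseq}). The only cosmetic difference is that the paper writes the $n\ge 2$ chain of equalities starting from $K_n(\Sigma R)$ whereas you start from $K_{n-1}(R)$, and you spell out the consistency checks a bit more explicitly.
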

\begin{proof}
For $n\le 0$, the statement of the proposition is immediate from the definition of $K_n$. For $n=1$, it is \eqref{decal}.
If $n\ge 2$, then
\[
K_n(\Sigma R)=\pi_n(K(\Sigma R))=\pi_{n-1}(\Omega K(\Sigma R))=\pi_{n-1}(\Omega_0 K(\Sigma R))=\pi_{n-1}K(R)=K_{n-1}R.\ \ \qed
\]
\end{proof}
\begin{rem}
The homotopy equivalence of Proposition \ref{prop:conseq} is the basis for the construction of the nonconnective $K$-theory
spectrum; we will come back to this in Section \ref{sec:spectra}.
\end{rem}
\subsection{Functoriality issues.}\label{subsec:functissues}
As defined, the rule $K:R\mapsto BGL(R)^+$ is only functorial up to homotopy. Actually its possible to choose  a
functorial model for $K R$; this can be done in different ways (see for example \cite[11.2.4,11.2.11]{lod}).
However, in the constructions and arguments we have made (notably in the proof of \ref{prop:conseq}) we have often
used Whitehead's theorem that a map of $CW$-complexes which induces an isomorphism at the level of homotopy groups
(a {\it weak equivalence}) always has a homotopy inverse. Now, there is in principle no reason why a natural weak
equivalence between functorial $CW$-complexes will admit a homotopy inverse which is also natural; thus for example,
the weak equivalence of Proposition \ref{prop:conseq} need not be natural for an arbitrarily chosen functorial
version of $K R$. What we need is to be able to choose functorial models so that any natural weak equivalence
between them automatically has a natural homotopy inverse. In fact we can actually do this, as we shall now see.
First of all, as a technical restriction we have to choose a small full subcategory $I$ of the category $\ass$, and
look at $K$-theory as a functor on $I$. This is no real restriction, as in practice we always start with a set of
rings (ofter with only one element) and then all the arguments and constructions we perform take place in a set
(possibly larger than the one we started with, but still a set). Next we invoke the fact that the category
$\topo_*^I$ of functors from $I$ to pointed spaces is a closed model category where fibrations and weak equivalences
are defined objectwise (by \cite[11.6.1]{hir} this is true of the category of functors to any cofibrantly generated
model category; by \cite[11.1.9]{hir}, $\topo_*$ is such a category). This implies, among other things, that there
is a full subcategory $(\topo_*^I)_c\to \topo_*^I$, the subcategory of cofibrant objects (among which any natural
weak equivalence has a natural homotopy inverse), a functor $\topo_*^I\to (\topo_*^I)_c$, $X\mapsto \hat{X}$ and a
natural transformation $\hat{X}\to X$ such that $\hat{X}(R)\to X(R)$ is a fibration and weak equivalence for all
$R$. Thus we can replace our given functorial model for $B\GL^+(R)$ by $\widehat{B\GL(R)^+}$, and redefine
$K(R)=\widehat{B\GL(R)^+}$.

\bn

\subsection{Relative $K$-groups and excision.}

Let $R$ be a unital ring, $I\triqui R$ an ideal, and $S=R/I$. Put
\[
\widebar{\GL}S:=\ima(\GL R\to \GL S)
\]
The inclusion $\widebar{\GL}S\subset\GL S$ induces a map
\begin{equation}\label{map:glbar}
(B\widebar{\GL}S)^+\to K(S)
\end{equation}
By \ref{facts:bg+} ii), \eqref{map:glbar} induces an isomorphism
\[
\pi_n(B\widebar{\GL}S)^+=K_nS \qquad (n\ge 2).
\]
On the other hand,
\[
\pi_1(B\widebar{\GL}S^+)=\widebar{\GL}S/ES=\ima(K_1R\to K_1S).
\]
Consider the homotopy fiber
\[
K(R:I):=\mathrm{fiber}((B\GL R)^+\to (B\widebar{\GL}S)^+).
\]
The {\it relative $K$-groups} of $I$ with respect to the ideal embedding $I\triqui R$ are defined
by
\[
K_n(R:I):=\begin{cases}\pi_n K(R:I) &n\ge 1\\ K_n(I) & n\le 0\end{cases}
\]
The long exact squence of homotopy groups of the fibration which defines $K(R:I)$, spliced together
 with the exact sequences of Theorem \ref{thm:exci01} and Proposition \ref{prop:kneg},
 yields a long exact sequence
\begin{equation}\label{relseq}
K_{n+1}R\to K_{n+1}S\to K_n(R:I)\to K_nR\to K_n(S)\qquad (n\in \Z)
\end{equation}
The canonical map $\tilde{I}\to R$ induces a map
\begin{equation}\label{abstorel}
K_n(I)\to K_n(R:I)
\end{equation}
This map is an isomorphism for $n\le 0$, but not in general (see Remark \ref{exa:swanex}). The rings $I$ so that this map is an isomorphism for all $n$ and $R$ are called {$K$-excisive}.
Suslin and Wodzicki have completely
characterized $K$-excisive rings (\cite{wodex},\cite{qs},\cite{sus}). We have
\begin{them}\label{thm:exito}(\cite{sus})
The map \eqref{abstorel} is an isomorphism for all $n$ and $R$
$\iff \tor^{\tilde{I}}_n(\Z,I)=0$ $\forall n$.\qed
\end{them}
Note that
\begin{gather*}
\tor_0^{\tilde{I}}(\Z,I)=I/I^2\\
\tor_n^{\tilde{I}}(\Z,I)=\tor_{n+1}^{\tilde{I}}(\Z,\Z).
\end{gather*}
\begin{exa}\label{exa:sigmanotexci}
Let $G$ be a group, $IG\triqui\Z G$ the augmentation ideal. Then $\Z G=\tilde{IG}$ is the unitalization of $IG$. Hence
\[
\tor_n^{\tilde{IG}}(\Z,IG)=H_{n+1}(G,\Z).
\]
In particular
\[
\tor_0^{\tilde{IG}}(\Z,I)=G_{ab}
\]
So if $IG$ is $K$-excisive, then $G$ must be a perfect group. Thus, for example, $IG$ is not $K$-excisive if $G$ is a nontrivial abelian group. In particular,
the ring $\sigma$ is not $K$-excisive, as it coincides with the augmentation ideal of  $\Z[\Z]=\Z[t,t^{-1}]$. As another example, if $S$ is an infinite sum ring, then
\[
H_n(\GL(S),\Z)=H_n(K(S),\Z)=H_n(pt,\Z)=0\qquad (n\ge 1).
\]
Thus the ring $I\GL(S)$ is $K$-excisive.
\end{exa}
\begin{rem}
We shall introduce a functorial complex $\bar{L}(A)$ which computes $\tor_*^{\tilde{A}}(\Z,A)$ and use
it to show that the functor $\tor_*^{\widetilde{(-)}}(\Z,-)$ commutes with filtering colimits. Consider
the functor $\perp:\tilde{A}-mod\to\tilde{A}-mod$,
\[
\perp M=\bigoplus_{m\in M}\tilde{A}.
\]
The functor $\perp$ is the free $\tilde{A}$-module cotriple \cite[8.6.6]{chubu}. Let $L(A)\to A$ be the canonical free resolution associated
to $\perp$ \cite[8.7.2]{chubu};
by definition, its $n$-th term is $L_n(A)=\perp^n A$. Put $\bar{L}(A)=\Z\otimes_{\tilde{A}}L(A)$.
Then $\bar{L}(A)$ is a functorial chain complex which satisfies
$H_*(\bar{L}(A))=\tor_*^{\tilde{A}}(\Z,A)$. Because $\perp$ commutes with filtering colimits, it follows that
the same is true of $L$ and $\bar{L}$, and therefore also of $\tor_*^{\widetilde{(-)}}(\Z,-)=H_*\bar{L}(-)$.
\end{rem}
\begin{exe}\label{exe:unitalexci}
\item{i)} Prove that any unital ring is $K$-excisive.
\item{ii)} Prove that if $R$ is a unital ring, then $M_\infty R$ is $K$-excisive. (Hint: $M_\infty R=\colim_nM_nR$).
\end{exe}

\begin{rem}
If $A$ is flat over $k$ (e.g. if $k$ is a field) then the canonical resolution $L^k(A)\weq A$ associated with the induced
module cotriple $\tilde{A}_k\otimes_k(-)$, is flat. Thus $\bar{L}^k(A):=L^k(A)/AL^k(A)$ computes $\tor_*^{\tilde{A}_k}(k,A)$.
Modding out by degenerates, we obtain
a homotopy equivalent complex (\cite{chubu}) $C^{\mbar}(A/k)$, with $C_n^{\mbar}(A/k)=A^{\otimes_k n+1}$. The complex
$C^{\mbar}$ is the bar complex considered by Wodzicki in \cite{wodex}; its homology is the bar homology of
$A$ relative to $k$, $H^{\mbar}_*(A/k)$. If $A$ is a $\Q$-algebra, then $A$ is flat as
a $\Z$-module, and thus $H^{\mbar}_*(A/\Z)=\tor^{\tilde{A}}_*(\Z,A)$. Moreover, as $A^{\otimes_\Z n}=A^{\otimes_\Q n}$, we have
$C^{\mbar}(A/\Z)=C^{\mbar}(A/\Q)$, whence
\[
\tor^{\tilde{A}}_*(\Z,A)=\tor^{\tilde{A}_\Q}_*(\Q,A)=H^{\mbar}_*(A/\Q)
\]
\end{rem}

\subsection{Locally convex algebras.}
A {\it locally convex} algebra is a complete topological $\C$-algebra $L$ with a locally convex topology. Such a topology is
defined by a family of seminorms $\{\rho_\alpha\}$; continuity of the product means that for every $\alpha$ there exists a $\beta$
such that
\begin{equation}\label{multcont}
\rho_\alpha(xy)\le \rho_\beta(x)\rho_\beta(y)\qquad (x,y\in L).
\end{equation}
If in addition the topology is determined by a countable family of seminorms, we say that $L$ is a {\it Fr\'echet} algebra.
\mn

Let $L$ be a locally convex algebra. Consider the following two factorization properties:
\begin{itemize}
\item[(a)] Cohen-Hewitt factorization.
\begin{gather}\label{ppty:F}
\qquad \forall n\ge 1,a=(a_1,\dots,a_n)\in L^{\oplus n}=\bigoplus_{i=1}^nL\quad\exists z\in L,\quad x\in L^n\text{ such
that}\\
z\cdot x=a\text{ and } x\in \widebar{L\cdot a}\nonumber
\end{gather}
Here the bar denotes topological closure in $L^{\oplus n}$.
\item[(b)] Triple factorization.
\begin{gather}\label{ppty:T}
\forall a\in L^{\oplus n}, \exists b\in L^{\oplus n}, \quad c,d \in L,\\ \text{such that}
a=cdb \text{ and } (0:d)_l:=\{v\in L:dv=0\}= (0:cd)_l
\end{gather}
\end{itemize}
The right ideal $(0:d)_l$ is called the {\it left annihilator} of $d$. Note that property (b) makes
sense for an arbitrary ring $L$.

\begin{lem}\label{lem:equifacto}
Cohen-Hewitt factorization implies triple factorization. That is, if $L$ is a locally convex algebra which
satisfies property (a) above, then it also satisfies property (b).
\end{lem}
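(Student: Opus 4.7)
The plan is to derive triple factorization by applying Cohen--Hewitt factorization \emph{twice} in succession: once to the $n$-tuple $a$ to peel off a scalar, then once to that scalar itself to split it into two scalars $r,s$ whose product equals the original scalar and such that $s$ lies in the closure of $L$ times that product. The point is that this second application is exactly what forces the annihilator condition in (b).

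Concretely, I would first apply property (a) to $a$ to obtain $p\in L$ and $q\in L^{\oplus n}$ with $a=pq$ and $q\in \overline{L\cdot a}$. Next, I would apply (a) again, this time to the single element $p$ viewed as belonging to $L^{\oplus 1}=L$, to produce $r,s\in L$ with $p=rs$ and $s\in \overline{L\cdot p}$. Setting $c:=r$, $d:=s$, and $b:=q$, one immediately has
\[
cdb=(rs)q=pq=a,
\]
so the three-factor decomposition is in hand. It remains to verify $(0:d)_l=(0:cd)_l$, i.e.\ $\{v\in L:sv=0\}=\{v\in L:pv=0\}$.

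One inclusion is immediate from $p=rs$: if $sv=0$ then $pv=r(sv)=0$, so $(0:s)_l\subset (0:p)_l=(0:cd)_l$. For the reverse inclusion, use that $s\in \overline{L\cdot p}$, so $s=\lim_\alpha \mu_\alpha p$ for some net $(\mu_\alpha)$ in $L$. Joint continuity of multiplication, guaranteed by the seminorm estimate \eqref{multcont}, implies that right multiplication by any fixed $v\in L$ is continuous; hence whenever $pv=0$ we get
\[
sv=\bigl(\lim_\alpha \mu_\alpha p\bigr)v=\lim_\alpha \mu_\alpha(pv)=0,
\]
so $(0:p)_l\subset (0:s)_l$. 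This finishes the proof.

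The only potentially subtle point is the appeal to continuity of right multiplication in the limit, but this is built into the definition of a locally convex algebra, so no genuine obstacle appears. The real content of the argument is the observation that the scalar factor produced by the first Cohen--Hewitt step can itself be Cohen--Hewitt factored in $L^{\oplus 1}$, producing precisely the relation $cd=rs=p$ together with $d=s\in \overline{L\cdot p}=\overline{L\cdot(cd)}$ needed to match the two annihilators.
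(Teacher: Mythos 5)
Your proof is correct and follows the same route as the paper's: apply Cohen--Hewitt once to $a$ to peel off a scalar, apply it again to that scalar to get $p=rs$ with $s\in\overline{L\cdot p}$, and conclude $(0:s)_l=(0:p)_l$. You spell out the annihilator comparison (the easy inclusion from $p=rs$, the other from continuity of right multiplication applied to the net converging to $s$) which the paper leaves implicit, but the underlying argument is identical.
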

\begin{proof}
Let $a\in L^{\oplus n}$. By (a), there exist $b\in L^{\oplus n}$ and $z\in L$ such that $a=zb$. Applying
(a) again, we get that $z=cd$ with $d\in \widebar{L\cdot z}$; this implies that $(0:d)_l=(0:z)_l$.
\end{proof}

\begin{them}\label{prop:exciloca}(\cite[3.12, 3.13(a)]{qs})
Let $L$ be a ring. Assume that either $L$ or $L^{op}$ satisfy \eqref{ppty:T}.
Then $A$ is $K$-excisive.\qed
\end{them}

\subsection{Fr\'echet $m$-algebras with approximate units.}

A  {\it uniformly bounded left approximate unit} (ublau) in a locally convex algebra $L$ is a net $\{e_\lambda\}$ of
elements of $L$ such that $e_\lambda a\mapsto a$ for all $a$ and $\sup_\alpha\rho_\alpha(a)<\infty$. Right ubau's are defined analogously.
If $L$ is a locally convex algebra such that a defining family of seminorms can be chosen so that condition \eqref{multcont}
is satisfied with $\alpha=\beta$ (i.e. the seminorms are {\it submultiplicative}) we say that $L$ is an $m$-algebra.
An $m$-algebra which is also Fr\'echet will be called a {\it Fr\'echet $m$-algebra}.
\begin{exa}\label{exa:ubau} Every $C^*$-algebra has a two-sided ubau (\cite[I.4.8]{david}). If $G$ is a locally compact group, then the group algebra $L^1(G)$ is a Banach algebra
with two sided ubau \cite[8.4]{wodex}. If $L_1$ and $L_2$ are locally convex algebras with ublaus $\{e_\lambda\}$ and $\{f_\mu\}$, then
$\{e_\lambda\otimes f_\mu\}$ is a ublau for the projective tensor product $L_1\hotimes L_2$, which is a (Fr\'echet) $m$-algebra
if both $L_1$ and $L_2$ are.
\end{exa}

\begin{rem}
In a Banach algebra,
any bounded approximate unit is uniformly bounded. Thus for example, the unit of a unital Banach algebra
is an ublau. However, the unit of a general unital locally convex algebra (or even of a Fr\'echet $m$-algebra) need not
be uniformly bounded.
\end{rem}
Let $L$ be an $m$-Fr\'echet algebra. A left {\it Fr\'echet $L$-module} is a Fr\'echet space $V$ equipped with a left
$L$-module structure such that the multiplication map $L\times V\to V$ is continuous.
If $L$ is equipped with an ublau $e_\lambda$ such that $e_\lambda\cdot v\to v$ for all $v\in V$, then we say that $V$ is
{\it essential}.
\begin{exa}\label{exa:summ}
If $L$ is an $m$-Fr\'echet algebra with ublau $e_\lambda$ and $x\in L^{\oplus n}$ $(n\ge 1)$ then $e_\lambda x\to x$. Thus $L^{\oplus n}$
is an essential Fr\'echet $L$-module. The next exercise
generalizes this example.
\end{exa}
\begin{exe}\label{exe:summ}
Let $L$ be an $m$-Fr\'echet algebra with ublau $e_\lambda$, $M$ a unital $m$-Fr\'echet algebra, and $n\ge 1$. Prove that for
every $x\in (L\hotimes M)^{\oplus n}$, $(e_\lambda\otimes 1)x\to x$. Conclude that $(L\hotimes M)^{\oplus n}$ is an essential
$L\hotimes M$-module.
\end{exe}

The following Fr\'echet version of Cohen-Hewitt's factorization theorem (originally proved in the Banach
setting) is due to M. Summers.
\begin{them}\label{thm:summ}(\cite[2.1]{summ}) Let $L$ be an $m$-Fr\'echet algebra with ublau,
and $V$ an essential Fr\'echet left $L$-module. Then for each $v\in V$ and for each neighbourhood $U$ of the origin in $V$
there is an $a\in L$ and a $w\in V$ such that $v=aw$, $w\in\overline{Lv}$, and $w-v\in U$.\qed
\end{them}

\begin{them}\label{thm:ubau}(\cite[8.1]{wodex})
Let $L$ be a Fr\'echet $m$-algebra. Assume $L$ has a right or left ubau. Then $L$ is $K$-excisive.
\end{them}
\begin{proof}
In view of Lemma \eqref{lem:equifacto}, it suffices to show that $L$ satisfies property \eqref{ppty:F}. This follows by applying Theorem \ref{thm:summ}
to the essential $L$-module $L^{\oplus n}$.\qed
\end{proof}

\begin{exe}\label{exe:summ2}
Prove that if $L$ and $M$ are as in Exercise \eqref{exe:summ}, then $L\hotimes M$ is $K$-excisive.
\end{exe}

\begin{rem}
In \cite[8.1.1]{cot} it asserted that if $k\supset \Q$ is a field, and $A$ is a $k$-algebra, then
$\tor^{\tilde{A}_\Q}_*(\Q,A)=\tor^{\tilde{A}_k}_*(k,A)$, but the proof uses the identity
$\tilde{A}_k\otimes_{\tilde{A}}{\ ? \ }=k\otimes{\ ? \ }$, which is wrong. In {\it loc. cit.}, the lemma is used in combination
with Wodzicki's theorem (\cite[8.1]{wodex}) that a Fr\'echet algebra $L$ with ublau is $H$-unital
as a $\C$-algebra, to conclude that such $L$ is $K$-excisive. In Theorem \ref{thm:ubau} we gave
a different proof of the latter fact.
\end{rem}

\subsection{Fundamental theorem and the Toeplitz ring.}\label{subsec:fundatoep}
\begin{notation} If $G:\ass\to\ab$ is a functor, and $A$ is a ring, we put
\[
NG(A):=\coker(GA\to G(A[t])).
\]
\end{notation}

Let $R$ be a unital ring. We have a commutative diagram
\[
\xymatrix{R\ar[d]\ar[r]&R[t]\ar[d]\\ R[t^{-1}]\ar[r]&R[t,t^{-1}]}
\]
Thus applying the functor $K_n$ we obtain a map
\begin{equation}\label{map:fund1}
K_nR\oplus NK_nR\oplus NK_nR\to K_n R[t,t^{-1}]
\end{equation}
which sends $NK_nR\oplus NK_nR$ inside $\ker\ev_1$. Thus $K_nR\to
K_nR[t,t^{-1}]$ is a split mono, and the intersection of its image
with that of $NK_nR\oplus NK_nR$ is $0$. On the other hand, the
inclusion $\cT R\to \Gamma R$ induces a map of exact sequences
\[
\xymatrix{0\ar[r]&M_\infty R\ar@{=}[d]\ar[r]&\cT R\ar[d]\ar[r]&R[t,t^{-1}]\ar[d]\ar[r]& 0\\
0\ar[r]&M_\infty R\ar[r]&\Gamma R\ar[r]&\Sigma R\ar[r]& 0}
\]
In particular, we have a homomorphism $R[t,t^{-1}]\to\Sigma R$, and thus a homomorphism
\[
\eta:K_nR[t,t^{-1}]\to K_{n-1} R.
\]
Note that the maps $R[t]\to \cT R$, $t\mapsto \alpha$ and
$t\mapsto \alpha^*$, lift the homomorphisms $R[t]\to R[t,t^{-1}]$,
$t\mapsto t$ and $t\mapsto t^{-1}$. It follows that $\ker\eta$
contains the image of \eqref{map:fund1}. In \cite{lorep}, Loday
introduced a product operation in $K$-theory of unital rings
\[
K_p(R)\otimes K_q(S)\to K_{p+q}(R\otimes S).
\]
In particular, multiplying by the class of $t\in K_1(\Z[t,t^{-1}])$ induces a map
\begin{equation}\label{map:fund2}
\cup t:K_{n-1}R\to K_nR[t,t^{-1}].
\end{equation}
Loday proves in \cite[2.3.5]{lorep} that $\eta\circ (-\cup t)$ is
the identity map. Thus the images of \eqref{map:fund1} and
\eqref{map:fund2} have zero intersection. Moreover, we have the following result, due to Quillen
\cite{gray}, which is known as the fundamental theorem of $K$-theory.

\begin{them}\label{thm:fundak}(\cite{gray}, see also \cite{sch})
Let $R$ be a unital ring. The maps \eqref{map:fund1} and \eqref{map:fund2} induce an isomorphism
\[
K_nR\oplus NK_nR\oplus NK_nR\oplus K_{n-1}R\iso
K_nR[t,t^{-1}]\qquad (n\in\Z).
\]
\end{them}
\begin{coro}\label{cor:fundak} (cf. Theorem \ref{thm:decalkh})
\[
K_n(R[t,t^{-1}]:\sigma R)=K_{n-1}R\oplus NK_nR\oplus
NK_nR\qquad(n\in\Z).
\]
\end{coro}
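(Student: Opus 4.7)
The plan is to exploit the splitting of
\[
0 \to \sigma R \to R[t,t^{-1}] \xrightarrow{\ev_1} R \to 0
\]
by the natural inclusion $R \hookrightarrow R[t,t^{-1}]$, and then invoke Theorem \ref{thm:fundak}.

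First, because $\ev_1$ is split, the long exact sequence \eqref{relseq} breaks into split short exact sequences
\[
0 \to K_n(R[t,t^{-1}]:\sigma R) \to K_n R[t,t^{-1}] \xrightarrow{\ev_{1,*}} K_n R \to 0
\]
for every $n \in \Z$; hence $K_n(R[t,t^{-1}]:\sigma R) \cong \ker(\ev_{1,*})$, realized as a direct summand of $K_n R[t,t^{-1}]$.

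Next, Theorem \ref{thm:fundak} supplies an isomorphism
\[
K_n R \oplus NK_n R \oplus NK_n R \oplus K_{n-1} R \iso K_n R[t,t^{-1}].
\]
Under this decomposition the first $K_n R$ summand is the image of the inclusion $R \to R[t,t^{-1}]$, on which $\ev_{1,*}$ is the identity. The two $NK_n R$ summands lie inside $\ker \ev_{1,*}$, by the description of \eqref{map:fund1} recorded in the paragraph preceding Theorem \ref{thm:fundak}. Finally, the $K_{n-1} R$ summand is the image of $\cup t$, and it too lies in $\ker \ev_{1,*}$, because $\ev_1(t) = 1 \in K_1 R$ has trivial class and cup product is natural. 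Consequently $\ev_{1,*}$ is precisely projection onto the first factor, so
\[
\ker(\ev_{1,*}) = NK_n R \oplus NK_n R \oplus K_{n-1} R,
\]
which together with the previous step yields the corollary.

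The only friction point is to verify that $NK_n R \oplus NK_n R \oplus K_{n-1} R$ really sits inside $\ker \ev_{1,*}$, rather than being only an abstract complement to $K_n R$; but this is precisely what the explicit construction of the maps \eqref{map:fund1} and \eqref{map:fund2} just before Theorem \ref{thm:fundak} records, so essentially no new work is needed.
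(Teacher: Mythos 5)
Your proof is correct, and since the paper states the corollary without proof (treating it as immediate from Theorem \ref{thm:fundak}), your argument is precisely the routine verification the author expects the reader to supply. Two small remarks on precision: (1) where you write ``$\ev_1(t)=1\in K_1R$ has trivial class,'' what you mean is that $\ev_1(t)=1\in\Z[t,t^{-1}]\to\Z$ has trivial class in $K_1\Z$, so that by naturality of Loday's product $\ev_{1,*}(x\cup[t])=x\cup[\ev_1 t]=x\cup 0=0$; (2) for the bare isomorphism of abelian groups asserted in the corollary, one does not even strictly need to check that the last three summands sit inside $\ker\ev_{1,*}$, since both $K_n(R[t,t^{-1}]:\sigma R)$ and $NK_nR\oplus NK_nR\oplus K_{n-1}R$ are realized as complements of the \emph{same} split monomorphism $\iota_*\colon K_nR\hookrightarrow K_nR[t,t^{-1}]$ and hence are both isomorphic to the common quotient. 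Your extra verification shows the stronger and more useful statement that $\ev_{1,*}$ is literally projection onto the first summand under the fundamental-theorem decomposition, which is worth having.
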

\begin{propo}\label{prop:ktoep}(cf. Theorem \ref{thm:algcuntz})
Let $R$ be a unital ring, and $n\in\Z$. Then
\begin{gather*}
K_n\cT R=K_nR\oplus NK_nR\oplus NK_nR,\\
K_n(\cT R:\cT_0R)=NK_nR\oplus NK_nR.
\end{gather*}
\end{propo}
\begin{proof}
Consider the exact sequence
\[
0\to M_\infty R\to \cT R\to R[t,t^{-1}]\to 0
\]
By Proposition \ref{prop:exciexci}, Exercise \ref{exe:unitalexci} and matrix stability we have a long exact sequence
\[
K_n R\to K_n\cT R\to K_n R[t,t^{-1}]\to K_{n-1} R\to K_{n-1}\cT R\qquad (n\in\Z).
\]
By \ref{prop:atotau=0}, the first and the last map are zero. The proposition is
immediate from this, from Corollary \ref{cor:fundak}, and from the
discussion above.\qed
\end{proof}
\section{Comparison between algebraic and topological $K$-theory I}\label{sec:compa1}

\subsection{Stable $C^*$-algebras.}

The following is Higson's homotopy invariance theorem.

\begin{them}\label{thm:hit1}\cite[3.2.2]{hig}
Let $G$ be a functor from $C^*$-algebras to abelian groups. Assume that $G$ is split exact and $\cK$-stable. Then
$G$ is homotopy invariant.\qed
\end{them}

\begin{lem}\label{lem:stab} Let $G$ be a functor from $C^*$-algebras to abelian groups. Assume that $G$ is $M_2$-stable. Then
the functor $F(A):=G(A\sotimes \cK)$ is $\cK$-stable.
\end{lem}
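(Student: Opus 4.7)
The plan is to show that the natural transformation $\eta : 1 \to \cK \sotimes -$ (defined on a $C^*$-algebra $A$ by $\eta_A(a) = e_{11} \otimes a$) becomes an isomorphism after applying $F = G \circ (- \sotimes \cK)$. Unwinding, this means proving that for every $A$ the $*$-homomorphism
\[
\eta_A \sotimes \id_\cK : A \sotimes \cK \longrightarrow \cK \sotimes A \sotimes \cK
\]
becomes an isomorphism under $G$. My strategy is to exploit the absorption $\cK \sotimes \cK \cong \cK$ to rewrite the stabilisation map as an inner endomorphism of $A \sotimes \cK$ implemented by an isometry, and then apply Proposition \ref{prop:vw}.

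Concretely, I would fix a unitary $U : \ell^2(\N) \otimes \ell^2(\N) \iso \ell^2(\N)$, inducing a $*$-isomorphism $\mu : \cK \sotimes \cK \iso \cK$, $T \mapsto U T U^*$, and combine it with the commutativity flip to obtain a $*$-isomorphism
\[
\psi : \cK \sotimes A \sotimes \cK \iso A \sotimes \cK \sotimes \cK \iso A \sotimes \cK.
\]
Setting $V : \ell^2 \to \ell^2$ to be the isometry $V\xi := U(e_1 \otimes \xi)$, which satisfies $V^*V = 1$, a direct calculation (using $e_{11} \otimes k = i k i^*$ for the inclusion $i(\xi) = e_1 \otimes \xi$) yields $\mu(e_{11} \otimes k) = V k V^*$. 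Consequently the composite $\psi \circ (\eta_A \sotimes \id_\cK)$ is precisely the $*$-endomorphism of $A \sotimes \cK$ given by
\[
x \longmapsto (1 \otimes V)\, x\, (1 \otimes V^*).
\]

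With this identification I can invoke Proposition \ref{prop:vw} applied to the ring $A \sotimes \cK$ inside the unital overring $M(A \sotimes \cK)$, taking $W := 1 \otimes V$ and $V_{\mathrm{prop}} := 1 \otimes V^*$. The containments $W(A \sotimes \cK),\, (A \sotimes \cK) V_{\mathrm{prop}} \subset A \sotimes \cK$ are automatic since multiplication by $V$ or $V^*$ preserves $\cK$, and the crucial identity $x V_{\mathrm{prop}} W y = x(1 \otimes V^*V) y = xy$ holds because $V^*V = 1$. Since $G$ is $M_2$-stable on every $C^*$-algebra, in particular on $A \sotimes \cK$ and on $M_2(A \sotimes \cK)$, Proposition \ref{prop:vw} gives $G(\psi \circ (\eta_A \sotimes \id_\cK)) = 1_{G(A \sotimes \cK)}$. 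Because $\psi$ is a $*$-isomorphism, $G(\psi)$ is an isomorphism, and therefore $F(\eta_A) = G(\eta_A \sotimes \id_\cK)$ is an isomorphism too, which is precisely $\cK$-stability of $F$.

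The only genuine obstacle is the bookkeeping needed to exhibit the stabilisation map as inner conjugation by an isometry; once $V$ is correctly identified, the conclusion follows with no further calculation from Proposition \ref{prop:vw}. A minor secondary point is to confirm that the flip on $\cK \sotimes A \sotimes \cK$ and the isomorphism $\mu$ are genuine $C^*$-algebraic isomorphisms, which is fine because $\cK$ is nuclear.
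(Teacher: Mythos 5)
Your proof is correct, but it takes a genuinely different route from the paper's. The paper constructs a $C^*$-isomorphism $\cK\sotimes\cK\cong M_2\cK$ (using a Hilbert space decomposition $H\otimes_2 H\cong H\oplus H$ compatible with the vector $e_1$) so that the corner embedding $e_{11}\otimes 1:\cK\to\cK\sotimes\cK$ matches, up to this iso, the corner embedding $\iota_1:\cK\to M_2\cK$, and then invokes $M_2$-stability directly with no further machinery. You instead use the absorption $\cK\sotimes\cK\cong\cK$ to present the stabilization map (after the flip) as the inner endomorphism $x\mapsto(1\otimes V)x(1\otimes V^*)$ of $A\sotimes\cK$ for an isometry $V$ with $V^*V=1$, and then invoke Proposition \ref{prop:vw}. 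Both approaches reduce ultimately to $M_2$-stability; the paper's does so via the commuting triangle, yours via Proposition \ref{prop:vw} (whose proof is itself powered by $M_2$-stability through Lemma \ref{lem:i_0=i_1}). Your argument requires the small additional observation that Lemma \ref{lem:i_0=i_1} and Proposition \ref{prop:vw} transfer to the $C^*$-category: the conjugating matrices $J_2,J_3$ in \ref{lem:i_0=i_1} are permutation unitaries, and for your specific pair $W=1\otimes V$, $V_{\mathrm{prop}}=1\otimes V^*$ (so $W^*=V_{\mathrm{prop}}$) the endomorphism $\phi^{V_{\mathrm{prop}},W}$ and its $M_2$-amplification are $*$-homomorphisms, so the proofs go through verbatim. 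With that remark in place your argument is complete; the paper's version avoids it by staying with a purely diagrammatic comparison of corner inclusions.
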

\begin{proof} Let $H$ be an infinite dimensional separable Hilbert space. The canonical isomorphism
$\C^2\otimes_2 H\cong H\oplus H$ induces an isomorphism $\cK\sotimes \cK\to M_2\cK$ which makes the following diagram
commute
\[
\xymatrix{\cK\sotimes\cK\ar[rr]^{\cong}&& M_2\cK\\
                 &\cK\ar[ul]^{e_{11}\otimes 1}\ar[ur]^{\iota_1}&}
\]
Since $G$ is $M_2$-stable by hypothesis, it follows that $F(1_A\sotimes e_{1,1}\sotimes 1_\cK)$ is an isomorphism for
all $A$.\qed
\end{proof}
The following result, due to Suslin and Wodzicki, is (one of the variants of) what is known as Karoubi's conjecture \cite{karcomp}.


\begin{them}\label{thm:karconc*}\cite[10.9]{qs}
Let $A$ be a $C^*$-algebra. Then there is a natural isomorphism $K_n(A\sotimes \cK)=K^{\top}_n(A\sotimes\cK)$.
\end{them}
\begin{proof}
By definition $K_0=K_0^{\top}$ on all $C^*$-algebras. By Example \ref{exa:ubau} and Theorem \ref{thm:ubau},
$C^*$-algebras are $K$-excisive. In particular $K_*$ is split exact when regarded as a functor of $C^*$-algebras.
By \ref{prop:hikay} iii), \ref{prop:kneg} i), and split exactness, $K_*$ is $M_\infty$-stable on $C^*$-algebras; this
implies it is also $M_2$-stable (Exercise \ref{exe:matrix}). Thus $K_*(-\sotimes\cK)$ is $\cK$-stable, by \ref{lem:stab}.
Hence  $K_n(A(0,1]\sotimes\cK)=0$, by split exactness and homotopy invariance (Theorem \ref{thm:hit1}). It follows
that
\begin{equation}\label{downshift}
K_{n+1}(A\sotimes\cK)=K_n(A(0,1)\sotimes \cK)
\end{equation}
by excision. In particular, for $n\ge 0$,
\begin{equation}\label{agreepos}
K_n(A\sotimes\cK)=K_0(A\sotimes \sotimes_{i=1}^n\C(0,1)\sotimes\cK)=K_n^{\top}(A\sotimes\cK).
\end{equation}
On the other hand, by Cuntz' theorem \ref{thm:cu}, excision applied to the $C^*$-Toeplitz extension and \ref{lem:stab},
$K_{n+1}(A(0,1)\sotimes\cK)=K_n(A\sotimes\cK\sotimes\cK)=K_n(A\sotimes\cK)$.
Putting this together with \eqref{downshift}, we get that $K_*(\cK\sotimes A)$ is Bott periodic. It follows that the
identity \eqref{agreepos} holds for all $n\in \Z$.\qed
\end{proof}

\subsection{Stable Banach algebras.}
The following result is a particular case of a theorem of Wodzicki.

\begin{them}\label{thm:karconbau}(\cite[Thm. 2]{wod}, \cite[8.3.3, 8.3.4]{cot})
Let $L$ be Banach algebra with right or left ubau. Then there is an isomorphism $K_*(L\hotimes\cK)=K_*^{\top}(L\hotimes\cK)$.
\end{them}
\begin{proof}
Consider the functor $G_L:C^*\to\ab$, $A\mapsto K_*(L\hotimes(A\sotimes\cK))$. By the same argument as in the proof
of \ref{thm:karconc*}, $G_L$ is homotopy invariant.
Hence $\C\to \C[0,1]$ induces an isomorphism
\begin{align*}
G_L(\C)=&K_*(L\hotimes \cK)\iso G_L(\C[0,1])=K_*(L\hotimes(\C[0,1]\sotimes\cK))\\
=&K_*(L\hotimes \cK[0,1])=K_*((L\hotimes\cK)[0,1]).
\end{align*}
Hence $K_{n+1}(L\hotimes\cK)=K_n(L\hotimes\cK(0,1))$, by \ref{thm:ubau} and \ref{exa:ubau}. Thus
$K_n(L\hotimes\cK)=K_n^{\top}(L\hotimes\cK)$ for $n\ge 0$. Consider the punctured Toeplitz sequence
\[
0\to \cK\to \cT^{\top}_0\to \C(0,1)\to 0
\]
By \cite[V.1.5]{david}, this sequence admits a continuous linear splitting. Hence it remains exact after applying the functor
$L\hotimes-$. By \ref{thm:cu}, we have
\[
K_{-n}(L\hotimes(\cK\sotimes\cT_0))=0\qquad (n\ge 0).
\]
Thus
\[
K_{-n}(L\hotimes\cK)=K_{-n}(L\hotimes(\cK\sotimes\cK))=K_0((L\hotimes\cK)\hotimes\hotimes_{i=1}^n\C(0,1))=K^{\top}_{-n}(L\hotimes \cK).\ \ \qed
\]
\end{proof}
\begin{rem}
The theorem above holds more generally for $m$-Fr\'echet algebras (\cite[Thm. 2]{wod}, \cite[8.3.4]{cot}), with the appropriate definition
of topological $K$-theory (see Section \ref{sec:karconfre} below).
\end{rem}

\begin{exe}
Let $A$ be a Banach algebra. Consider the map $K_0(A)\to K_{-1}(A(0,1))$ coming from the exact sequence
\[
0\to A(0,1)\to A(0,1]\to A\to 0
\]
Put
\[
A(0,1)^m=A\hotimes\left(\hotimes_{i=1}^{m}\C(0,1)\right)
\]
and define
\[
KC_n(A)=\colim_p K_{-p}(A(0,1)^{n+p})\qquad (n\in \Z)
\]
\item{i)} Prove that $KC_*$ satisfies excision, $M_\infty$-stability, continuous homotopy invariance, and nilinvariance.
\item{ii)} Prove that $KC_*(A\hotimes\cK)=K_*^{\top}(A)$.
\item{iii)} Prove that the composite
\[K_n^{\top}A=K_0(A(0,1)^n)\to KC_n(A)\to KC_n(A\hotimes\cK)=K^{\top}_n(A)\qquad( n\ge 0)\]

is the identity map. In particular $KC_n(A)\to K^{\top}_n(A)$ is surjective for $n\ge 0$.
\end{exe}
\begin{rem}
J. Rosenberg has conjectured (see \cite[3.7]{rosurvey}) that, for $n\le -1$, the restriction of $K_n$ to
commutative $C^*$-algebras is homotopy invariant. Note that if $A$ is a Banach algebra (commutative or not)
such that, $K_{-q}(A(0,1)^p)\to K_{-q}(A(0,1)^p[0,1])$ is an isomorphism for all $p,q\ge 0$, then
$KC_n(A)\to K^{\top}_nA$ is an isomorphism for all $n$. In particular, if Rosenberg's conjecture holds, this
will happen for all commutative $C^*$-algebras $A$.

\end{rem}
\mn

\section{Topological $K$-theory for locally convex algebras}\label{sec:topk2}

\subsection{Diffeotopy $KV$.}
We begin by recalling the notion of $C^\infty$-homotopies or diffeotopies (from \cite{cd}, \cite{cw}).
Let $L$ be a locally convex algebra. Write $\cC^\infty([0,1],L)$ for  the algebra of those functions $[0,1]\to L$ which are restrictions
of $\cC^\infty$-functions $\R\to L$. The algebra $\cC^\infty([0,1],L)$ is equipped with a locally convex topology which makes
it into a locally convex algebra, and there is a canonical isomorphism 
\[
\cC^\infty([0,1],L)=\cC^\infty([0,1],\C)\hotimes L
\]
Two homomorphisms $f_0,f_1:L\to M$ of locally convex algebras are called {\it diffeotopic} if there is a homomorphism $H:L\to \cC^\infty([0,1],M)$
such that the following diagram commutes
\[
\xymatrix{&\cC^\infty([0,1],M)\ar[d]^{(\ev_0,\ev_1)}\\ L\ar[ur]^H\ar[r]_{(f_0,f_1)}&M\times M}
\]
Consider the exact sequences
\begin{gather}\label{lopecito}
0\to P^{\dif}L\to \cC^\infty([0,1],L)\overset{\ev_0}\to L\to 0\\
0\to \Omega^{\dif}L\to P^{\dif}L\overset{\ev_1}\to L\to 0\label{gonzalez}
\end{gather}
Here $P^{\dif}L$ and $\Omega^{\dif}L$ are the kernels of the evaluation maps. The first of these is split by the natural inclusion
$L\to \cC^\infty([0,1],L)$, and the second is split the continous linear map sending $l\mapsto (t\mapsto tl)$. We have
\[
\Omega^{\dif}L=\Omega^{\dif}\C\hotimes L,\qquad P^{\dif}L=P^{\dif}\C\hotimes L.
\]

Put
\begin{gather*}
\GL(L)_0^{\prime\prime}=\ima(\GL P^{\dif} L\to \GL (L))\\
KV^{\dif}_1(L)=\GL(L)/\GL(L)_0^{\prime\prime}.
\end{gather*}
The following is the analogue of Proposition \ref{prop:kv1ppties} for $KV_1^{\dif}$ (except for nilinvariance,
treated separately in Exercise \ref{exe:nilkvdif}).

\begin{propo}\label{prop:kvdppties}
\item{i)} The functor $KV^{\dif}_1$ is split exact.
\item{ii)}For each locally convex algebra $L$, there is a natural surjective map $K_1L\to KV^{\dif}_1L$.
\item{iii)} If
\begin{equation}\label{seq:lmn}
0\to L\to M\to N\to 0
\end{equation}
is an exact sequence such that the map $\GL(M)^{\prime\prime}_0\to \GL(N)^{\prime\prime}_0$ is onto, then
the map $K_1N\to K_0L$ of Theorem \ref{thm:exci01} factors through $KV^{\dif}_1N$, and the resulting sequence
\[
\xymatrix{KV^{\dif}_1L\ar[r] &KV^{\dif}_1M\ar[r]& KV^{\dif}_1N\ar[d]^{\partial}\\K_0N&K_0M\ar[l]&K_0L\ar[l]}
\]
is exact.
\item{iv)}$KV^{\dif}_1$ is additive, diffeotopy invariant and $M_\infty$-stable.
\end{propo}
\begin{proof} One checks that, mutatis-mutandis, the same argument of the proof of \ref{prop:kv1ppties} shows this.\qed
\end{proof}
By the same argument as in the algebraic case, we obtain a natural injection
\[
KV_1^{\dif}L\hookrightarrow K_0(\Omega^{\dif}L)
\]
Higher $KV^{\dif}$-groups are defined by
\[
KV^{\dif}_n(L)=KV_1^{\dif}((\Omega^{\dif})^{n-1}L)\qquad (n\ge 2)
\]
\begin{exe}\label{exe:nilkvdif}
\item{i)} Show that if $L$ is a locally convex algebra such that $L^n=0$ and such that $L\to L/L^i$
admits a continuous linear splitting for all $i\le n-1$, then $KV^{\dif}_1L=0$.
\item{ii)} Show that if $L$ is as in i) then the map $KV^{\dif}_1M\to KV^{\dif}_1N$ induced by \eqref{seq:lmn}
is an isomorphism.
\end{exe}

\subsection{Diffeotopy $K$-theory.}

Consider the excision map
\[
K_nL\to K_{n-1}(\Omega^{\dif}L)\qquad(n\le 0)
\]
associated to the sequence \eqref{gonzalez}. The {\it diffeotopy $K$-theory} of the algebra $L$ is defined
by the formula
\[
KD_nL=\colim_pK_{-p}((\Omega^{\dif})^{n+p}L)\qquad (n\in\Z)
\]
It is also possible to express $KD$ in terms of $KV^{\dif}$. First we observe that, since $\Sigma \C$
is a countably dimensional algebra, equipping it with the fine topology makes it into a locally convex algebra \cite[2.1]{cw}, and if $L$ is
any locally convex algebra then we have
\[
\Sigma L=\Sigma\C\otimes_\C L=\Sigma\C\hotimes L.
\]
Thus
\[
\Omega^{\dif}\Sigma L=\Sigma\Omega^{\dif}L.
\]
Taking this into account, and using the same argument as used to prove \eqref{khkv}, one obtains
\[
KD_nL=\colim_rKV^{\dif}_1(\Sigma^{r+1}(\Omega^{\dif})^{n+r}L)=\colim_rKV^{\dif}_{n+r+1}(\Sigma^{r+1}L).
\]

\begin{propo}\label{prop:kdppties}
Diffeotopy $K$-theory has the following properties.

\item[i)] It is diffeotopy invariant, nilinvariant and $M_\infty$-stable.
\item[ii)] It satisfies excision for those exact sequences which admit a continuous linear splitting.
That is, if
\begin{equation}\label{seq:lsplit}
0\to L\to M\overset{\pi}\to N\to 0
\end{equation}
is an exact sequence of locally convex algebras and there exists a continuous linear map $s:N\to M$ such that
$\pi s=1_N$, then there is a long exact sequence
\[
KD_{n+1}M\to KD_{n+1}N\to KD_nL\to KD_nM\to KD_nN\qquad (n\in\Z).
\]
\end{propo}
\begin{proof} The proof is essentially the same as that of Theorem \eqref{thm:khppties}. The splitting hypothesis in ii) guarantees
that the functor $L\mapsto \Omega^{\dif}L=\Omega^{\dif}\C\hotimes L$ and its iterations, send \eqref{seq:lsplit} to an exact sequence.\qed
\end{proof}

\paragraph{Comparing $KV^{\dif}$ and $KD$.}

The analogue of Proposition \ref{prop:khk0reg} is \ref{prop:kdk0reg} below. It is immediate from Lemma \ref{lem:plreg}, which is the analogue of
Lemma \ref{lem:pareg};
the proof of \ref{lem:plreg} is essentially the same as that of \ref{lem:pareg}.

\begin{lem}\label{lem:plreg}
Let $L$ be a locally convex algebra. Assume that for all $n\le 0$ and all $p\ge 1$, the natural inclusion
$\iota_p:L\to L\hotimes\left(\hotimes_{i=1}^p\cC^\infty([0,1])\right)=\cC^\infty([0,1]^p,L)$ induces an isomorphism
$K_n(L)\iso K_n\cC^\infty([0,1]^p,L)$. Then $KV_1^{\dif}L\to K_0\Omega^{\dif}L$ is an isomorphism, and for every
$n\le 0$ and every $p\ge 0$, $K_n(\cC^\infty([0,1]^p,P^{\dif}A))=0$
and $K_n(\Omega^{\dif}A)\to K_n(C^\infty([0,1]^p,\Omega^{\dif}A))$ is an isomorphism.
\end{lem}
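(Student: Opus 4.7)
The plan is to repeat the proof of Lemma \ref{lem:pareg} almost verbatim, substituting the smooth path and loop constructions $P^{\dif}L, \Omega^{\dif}L$ for their polynomial counterparts $PA, \Omega A$, and the smooth function algebra $\cC^\infty([0,1]^p, L) = \cC^\infty([0,1]^p, \C)\hotimes L$ for the polynomial extension $A[t_1,\dots,t_r]$. The two facts that make this translation possible are: sequence \eqref{lopecito} is split exact, and sequence \eqref{gonzalez} admits a continuous linear splitting $l\mapsto (t\mapsto tl)$. Both kinds of splittings are preserved by the projective tensor product with $\cC^\infty([0,1]^p,\C)$, so the induced smooth path and loop sequences remain (respectively split) exact.

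First I would establish the auxiliary vanishing $K_n(\cC^\infty([0,1]^p, P^{\dif}L)) = 0$ for all $n\le 0$ and $p\ge 0$. Applying $\cC^\infty([0,1]^p,-)$ to \eqref{lopecito} yields a split exact sequence
$$0 \to \cC^\infty([0,1]^p, P^{\dif}L) \to \cC^\infty([0,1]^{p+1}, L) \to \cC^\infty([0,1]^p, L) \to 0.$$
Applying $K_n$, which is split exact for $n\le 0$ by Proposition \ref{prop:kneg}, and using the hypothesis that $L\to \cC^\infty([0,1]^q,L)$ induces an isomorphism in $K_n$ for every $q\ge 0$, one concludes that the canonical inclusion $\cC^\infty([0,1]^p, L)\to\cC^\infty([0,1]^{p+1}, L)$ induces an isomorphism in $K_n$, whence the middle term must vanish. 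This is the direct analogue of $K_n(PA[t_1,\dots,t_r])=0$ in the polynomial case.

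Next, apply $\cC^\infty([0,1]^p,-)$ to \eqref{gonzalez} to get the (linearly split) exact sequence
$$0 \to \cC^\infty([0,1]^p, \Omega^{\dif}L) \to \cC^\infty([0,1]^p, P^{\dif}L) \to \cC^\infty([0,1]^p, L) \to 0.$$
For $n\le 0$, the vanishing just proved combined with non-positive $K$-excision (Proposition \ref{prop:kneg} ii)) yields isomorphisms $K_n(\cC^\infty([0,1]^p, L)) \iso K_{n-1}(\cC^\infty([0,1]^p, \Omega^{\dif}L))$. Comparing the $p=0$ case with general $p$ via the obvious map of short exact sequences gives a commuting square whose left vertical is an isomorphism by hypothesis, hence so is the right vertical; this is the asserted $K_n$-regularity of $\Omega^{\dif}L$. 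For the $KV_1^{\dif}$-statement, apply Proposition \ref{prop:kvdppties} iii) to the $p=0$ case; the hypothesis that $\GL(P^{\dif}L)_0''\to\GL(L)_0''$ is onto follows from the stronger identity $\GL(P^{\dif}M) = \GL(P^{\dif}M)_0''$ for any $M$, which in turn follows from the rescaling argument used in the proof of Proposition \ref{prop:kv1ppties}: given $h(s)\in\GL(P^{\dif}M)$, the element $\tilde h(s,t):=h(st)\in \GL(P^{\dif}P^{\dif}M)$ lifts $h$ along $\ev_{t=1}$. The same identity gives $KV_1^{\dif}(P^{\dif}L)=0$, and combined with the vanishing of $K_0(P^{\dif}L)$ from the previous step, the exact sequence of Proposition \ref{prop:kvdppties} iii) collapses to the desired isomorphism $KV_1^{\dif}(L) \iso K_0(\Omega^{\dif}L)$.

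I do not foresee any substantive obstacle; the only point that requires care is keeping track of which sequences are merely split as sequences of abelian groups (so that $K$-theory splits) and which need the continuous linear splitting (so that exactness is preserved by $\cC^\infty([0,1]^p,\C)\hotimes -$). Once these are sorted out, the proof reduces mechanically to the polynomial case of Lemma \ref{lem:pareg}.
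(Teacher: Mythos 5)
Your proposal is correct in approach and matches the paper's, which simply remarks that the proof of Lemma \ref{lem:pareg} carries over. The translation you spell out (split exact path sequence preserved by $\cC^\infty([0,1]^p,\C)\hotimes-$, continuous linear splitting of the loop sequence, the rescaling trick for $\GL(P^{\dif}M)=\GL(P^{\dif}M)_0''$) is exactly what is needed.

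One small gap: your comparison of the excision sequences only yields the isomorphism $K_m(\Omega^{\dif}L)\iso K_m(\cC^\infty([0,1]^p,\Omega^{\dif}L))$ for $m\le -1$, because the boundary maps shift the degree by one ($K_n\to K_{n-1}$ with $n\le 0$). The lemma also asserts the $m=0$ case. For that you need the $KV_1^{\dif}$/$K_0(\Omega^{\dif})$ identification not just at $p=0$ but at every level $p$ (the same exact-sequence argument applies to $\cC^\infty([0,1]^p,L)$ since $K_0(\cC^\infty([0,1]^p,P^{\dif}L))=0$ and the rescaling identity holds there too), and then compare $p=0$ with general $p$ using the diffeotopy invariance of $KV_1^{\dif}$ from Proposition \ref{prop:kvdppties}~iv). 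This is precisely the role of the first commuting square in the proof of Lemma \ref{lem:pareg}, with $KV_1$ replaced by $KV_1^{\dif}$. You have all the ingredients in hand; you simply did not close this last loop.
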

\begin{propo}\label{prop:kdk0reg}
Let $L$ be a locally convex algebra. Assume $L$ satisfies the hypothesis of Lemma \ref{lem:plreg}. Then
\[
KD_nL=\begin{cases} KV^{\dif}_nL&n\ge 1\\ K_nL&n\le 0\end{cases}
\]
\end{propo}

\subsection{Bott periodicity.}
Next we are going to prove a version of Bott periodicity for $KD$. The proof is analogous to Cuntz' proof of Bott periodicity
for $K^{\top}$ of $C^*$-algebras, with the algebra of smooth compact operators and the smooth Toeplitz algebra substituted for the
$C^*$-algebra of compact operators and the Toeplitz $C^*$-algebra.

\paragraph{Smooth compact operators.}
The algebra $\fk$ of {\it smooth compact operators} (\cite[\S2]{ncp},\cite[1.4]{cd}) consists of all those $\N\times \N$-matrices $(z_{i,j})$ with complex coefficients such
that for all $n$,
\[
\rho_n(z):=\sum_{p,q}p^nq^n|z_{p,q}|<\infty
\]
The seminorms $\rho_n$ are submultiplicative, and define a locally convex topology on $\fk$. Since the topology is defined by submultiplicative
seminorms, it is an $m$-algebra. Further because the seminorms above are countably many, it
is Fr\'echet; summing up $\fk$ is an $m$-F\'echet algebra. We have a map
\[
e_{11}:\C\to \com, z\mapsto e_{11}z
\]
Whenever we refer to $\com$-stability below, we shall mean stability with respect to the functor $\com\hotimes-$ and the map $e_{11}$.

\paragraph{Smooth Toeplitz algebra.}
The {\it smooth Toeplitz algebra} (\cite[1.5]{cd}), is the free $m$-algebra $\cT^{\sm}$ on two generators $\alpha$, $\alpha^*$ subject to $\alpha\alpha^*=1$. As in the $C^*$-algebra case, there is a commutative diagram with exact rows and split exact columns
\[
\xymatrix{0\ar[r]&\com\ar@{=}[d]\ar[r]&\cT^{\sm}_0\ar[r]\ar[d]&\Omega^{\dif}\C\ar[r]\ar[d]&0\\
          0\ar[r]&\com\ar[r]&\cT^{\sm}\ar[r]\ar[d]&\cC^\infty(S^1,\C)\ar[r]\ar[d]_{\ev_1}&0\\
&&\C\ar@{=}[r]&\C&}
\]
Here $\cT^{\sm}_0$ is defined so that the middle column be exact, and we use the exponential map
to identify $\Omega^{\dif}\C$ with the kernel of the evaluation map. Moreover the construction of $\cT^{\sm}$ given in \cite{cd} makes it clear that the rows are exact with a continuous linear splitting, and thus
they remain exact after applying $L\hotimes $, where $L$ is any locally convex algebra.

\paragraph{Bott periodicity.}

The following theorem, due to J. Cuntz, appears in \cite[Satz 6.4]{cd}, where it is stated for functors on $m$-locally convex algebras. The same
proof works for functors of all locally convex algebras.

\begin{them}\label{thm:cusm}(Cuntz, \cite[Satz 6.4]{cd})
Let $G$ be a functor from locally convex algebras to abelian groups. Assume that
\begin{itemize}
\item $G$ is diffeotopy invariant.
\item $G$ is $\com$-stable.
\item $G$ is split exact.
\end{itemize}
Then for every locally convex algebra $L$, we have:
\[
G(L\hotimes\cT_0^{\sm})=0
\]
\end{them}

\begin{them}\label{thm:kdbott}
For every locally convex algebra $L$, there is a natural isomorphism $KD_*(L\hotimes\com)\cong KD_{*+2}(L\hotimes\com)$.
\end{them}
\begin{proof}
Consider the exact sequence
\[
\xymatrix{0\ar[r]&L\hotimes\com\ar[r]&L\hotimes\cT_0^{\sm}\ar[r]&\Omega^{\dif}L\ar[r]&0}
\]
This sequence
is linearly split by construction (see \cite[1.5]{cd}). This splitting property is clearly preserved if we apply the functor $\com\hotimes$. Hence by Proposition \ref{prop:kdppties} ii), we have a natural map
 \begin{gather}\label{edgeloop}
 KD_{*+1}(L\hotimes\com)=KD_*(\Omega^{\dif}L\hotimes\com)\to KD_{*-1}(L\hotimes\com\hotimes\com)
 \end{gather}
By \cite[Lemma 1.4.1]{cd}, the map $1\hotimes e_{11}:\com\to \com\hotimes\com$ is diffeotopic to an isomorphism. Since $KD$ is diffeotopy
invariant, this shows that $KD_*(\com\hotimes-)$ is $\com$-stable. Hence $KD_{*-1}(L\hotimes\com\hotimes\com)=KD_{*-1}(L\hotimes\com)$, and
by Cuntz' theorem \ref{thm:cusm}, \eqref{edgeloop} is an isomorphism.\qed
\end{proof}
\begin{rem} Cuntz has defined a bivariant topological $K$-theory for locally convex algebras (\cite{cw}).
This theory associates groups $kk_*^{\lc}(L,M)$ to any pair $(L,M)$ of locally convex algebras, and is contravariant
in the first variable and covariant in the second. Roughly speaking, $kk_n^{\lc}(L,M)$ is defined as a certain
colimit of diffeotopy classes of $m$-fold extensions of $L$ by $M$ $(m\ge n)$. There is also an algebraic version of
Cuntz' theory, $kk_*(A,B)$, which is defined for all pairs of rings $(A,B)$ (\cite{biva}). We point out that
\begin{equation}\label{kkkd}
kk^{lc}_*(\C,M)=KD_*(M\hotimes\com).
\end{equation}
Indeed the proof given in \cite[8.1.2]{biva} that
for algebraic $kk$, $KH_*(A)=kk_*(\Z,A)$ for all rings $A$, can be adapted to prove \eqref{kkkd}; one
just needs to observe that, for the algebraic suspension, $kk^{\lc}_*(L,\Sigma M)=kk^{\lc}_{*-1}(L,M)$.
Note that, in view of the definition of $KD$, \eqref{kkkd} implies the following ``algebraic" formula for $kk^{\lc}$:
\[
kk^{\lc}_n(\C,L)=\colim_pK_{-p}(({\Omega^{\dif}})^{p+q}(L\otimes\com)).
\]
\end{rem}
\section{Comparison between algebraic and topological $K$-theory II}\label{sec:compa2}

\subsection{The diffeotopy invariance theorem.}
Let $H$ be an infinite dimensional separable Hilbert space; write $H\otimes_2H$ for the completed tensor product of Hilbert spaces. Note any two
infinite dimensional separable Hilbert spaces are isomorphic; hence we may regard any operator ideal $\J\triqui \cB(H)$ as a functor from
Hilbert spaces to $\C$-algebras (see \cite[3.3]{hus}). Let
$\J\triqui \cB$ be an ideal.
\begin{itemize}
\item $\J$ is {\it multiplicative} if $\cB\hotimes\cB\to \cB(H\otimes_2 H)$ maps $\J\hotimes \J$ to $\J$.
\item $\J$ is {\it Fr\'echet} if it is a Fr\'echet algebra and the inclusion $\J\to \cB$ is continuous.
A Fr\'echet ideal is a {\it Banach} ideal if it is a Banach algebra.
\sn
Write $\omega=(1/n)_n$ for the harmonic sequence.
\sn
\item $\J$ is {\it harmonic} if it is a multiplicative Banach ideal such that $\J(\ell^2(\N))$ contains $diag(\omega)$.
\end{itemize}
\begin{exa}
Let $p\in \R_{>0}$. Write $\cL_p$ for the ideal of those compact operators whose sequence of singular values is $p$-summable; $\cL_p$
is called the $p$-{\it Schatten ideal}. It is Banach $\iff$ $p\ge 1$, and is harmonic $\iff$ $p>1$. There is no interesting locally convex topology on $\cL_p$ for
$p<1$.
\end{exa}
\bn

The following theorem, due to J. Cuntz and A. Thom, is the analogue of Higson's homotopy invariance theorem \ref{thm:hit1} in the locally convex algebra
context. The formulation
we use here is a consequence of \cite[5.1.2]{ct} and \cite[4.2.1]{ct}.
\begin{them}\label{thm:hit2}(\cite{ct})
Let $\J$ be a harmonic operator ideal, and $G$ a functor from locally convex algebras to abelian groups. Assume that
\begin{itemize}
\item[i)] $G$ is $M_2$-stable.
\item[ii)] $G$ is split exact.
\end{itemize}
Then $L\mapsto G(L\hotimes \J)$ is diffeotopy invariant.
\end{them}
We shall need a variant of \ref{thm:hit2} which is valid for all Fr\'echet ideals $\J$. In order to state it,
we introduce some notation.
Let $\alpha:L\to M$
be a homomorphism of locally convex algebras. We say that $\alpha$ is an {\it isomorphism up to square zero}
if there exists a continous linear map $\beta:M\hotimes M\to L$ such that the compositions $\beta\circ(\alpha\hotimes\alpha)$
and $\alpha\circ\beta$ are the multiplication maps of $L$ and $M$. Note that if $\alpha$ is an isomorphism up to square zero,
then its image is a ideal of $M$, and both its kernel and its cokernel are square-zero algebras.

\begin{defi}\label{defi:nilinv} Let $G$ be a functor from locally convex algebras to abelian groups. We call $G$
{\rm continously nilinvariant} if it sends isomorphisms
up to square zero into isomorphisms.
\end{defi}
\begin{exa}\label{exa:khkosher}
For any $n\in\Z$, $KH_n$ is a continously nilinvariant functor
of locally convex algebras. If $n\le 0$, the same is true of $K_n$. In general,
if $H_*$ is the restriction to locally convex algebras of any excisive, nilinvariant homology theory of rings,
then $H_*$ is continously nilinvariant.
\end{exa}
\begin{them}\label{thm:hit3}\cite[6.1.6]{cot}
Let $\J$ be a Fr\'echet operator ideal, and $G$ a functor from locally convex algebras to abelian groups. Assume that
\begin{itemize}
\item[i)] $G$ is $M_2$-stable.
\item[ii)] $G$ is split exact.
\item[iii)] $G$ is continuously nilinvariant.
\end{itemize}
Then $L\mapsto G(L\hotimes \J)$ is diffeotopy invariant.
\end{them}
\begin{exe}\label{exe:diforeg}
Prove:
\item{i)} If $L$ is a locally convex algebra, then $L[t]$ is a locally convex algebra, and there is an isomorphism $L[t]\cong L\hotimes \C[t]$ where
$\C[t]$ is equipped with the fine topology.
\item{ii)} Let $G$ be a diffeotopy invariant functor from locally convex algebras to abelian groups. Prove that $G$ is polynomial homotopy
invariant.
\end{exe}

The following fact shall be needed below.
\begin{lem}\label{fact:jcomstable}
Let $G$ be a functor from locally convex algebras to abelian groups, and $\J$ a Fr\'echet ideal. Assume that $G$ is $M_2$-stable and that
$F(-):=G(-\hotimes\J)$ is diffeotopy invariant. Then $F$ is $\com$-stable.
\end{lem}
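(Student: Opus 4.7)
The plan is to first observe that $F$ is itself $M_2$-stable: since $M_2A\hotimes\J\cong M_2(A\hotimes\J)$ canonically as locally convex algebras, the $M_2$-stability of $G$ gives immediately that $F(A)\to F(M_2A)$ is an isomorphism for every locally convex algebra $A$. By Exercise \ref{exe:matrix}, $F$ is then $M_n$-stable for every $n$.

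To handle an arbitrary locally convex algebra $L$, the plan is to replace $F$ by the auxiliary functor $H_L(-):=F(L\hotimes -)=G(L\hotimes -\hotimes \J)$, which inherits $M_2$-stability (by the same argument) and diffeotopy invariance (since $\cC^\infty([0,1], L\hotimes M)\cong L\hotimes \cC^\infty([0,1],M)$). It will therefore suffice to establish the universal statement: any $M_2$-stable, diffeotopy invariant functor $H$ from locally convex algebras to abelian groups is $\com$-stable at $\C$, i.e.\ $H(e_{11}\colon\C\to\com)$ is an isomorphism.

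For this universal statement, the key input is Cuntz' Lemma 1.4.1 in \cite{cd}: fixing an isomorphism $\ell^2(\N)\cong\ell^2(\N)\otimes_2\ell^2(\N)$ yields an isomorphism of locally convex algebras $\phi\colon\com\hotimes\com\iso\com$ such that both compositions $\phi\circ(e_{11}\hotimes 1_\com)$ and $\phi\circ(1_\com\hotimes e_{11})$ are diffeotopic to $1_\com$. By diffeotopy invariance, the maps $H(1_\com\hotimes e_{11})$ and $H(e_{11}\hotimes 1_\com)$ from $H(\com)$ to $H(\com\hotimes\com)$ are both isomorphisms, while $H(\phi)$ is an isomorphism since $\phi$ is. Combined with the $M_n$-stability of $H$ together with Lemma \ref{lem:i_0=i_1}, which identifies in $H$ the various rank-one embeddings $\C\to M_n\hookrightarrow\com$, these ingredients let one construct an inverse to $H(e_{11}\colon\C\to\com)$ using that $e_{11}\colon\C\to\com$ factors, up to the $H$-identification of rank-one projections, as $\phi\circ(e_{11}\hotimes e_{11})$.

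The main obstacle is precisely this final diagram chase: one has to combine Cuntz' two diffeotopies and his isomorphism $\phi$ with the $M_n$-stability identifications of rank-one projections, in such a way that the composite yields a genuine two-sided inverse of $H(e_{11}\colon\C\to\com)$ rather than only a tautological relation. Carrying this out carefully is the content of the standard argument in \cite{cd}, on which the proof in \cite{cot} relies; once it is in place, applied to $H=H_L$, it produces the desired inverse to $F(1_L\hotimes e_{11})\colon F(L)\to F(L\hotimes\com)$ for every $L$, establishing $\com$-stability of $F$.
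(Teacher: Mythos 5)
Your reduction to the auxiliary functors $H_L(-)=F(L\hotimes-)$ and your observation that $F$ inherits $M_2$-stability are both fine, but the ``universal statement'' you reduce to --- that any $M_2$-stable, diffeotopy invariant functor $H$ from locally convex algebras to abelian groups satisfies that $H(e_{11}:\C\to\com)$ is an isomorphism --- is not a theorem, and the ingredients you assemble cannot prove it. Cuntz' Lemma 1.4.1 of \cite{cd} gives diffeotopies relating $1_\com\hotimes e_{11}$ and $e_{11}\hotimes 1_\com$ (both maps $\com\to\com\hotimes\com$) to the isomorphism $\phi^{-1}$, so it yields isomorphisms $H(\com)\to H(\com\hotimes\com)$; it says nothing about $H(\C)\to H(\com)$. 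The $M_n$-stability identifications only compare embeddings $\C\to M_n$ into \emph{finite} matrix algebras, and there is no passage from $M_n$-stability (for all $n$) to $\com$-stability without some continuity or absorption hypothesis. Indeed, if such a passage existed, the hypothesis ``$F(-)=G(-\hotimes\J)$'' would be superfluous and the lemma would assert that every $M_2$-stable diffeotopy invariant functor is $\com$-stable, which is not what the paper claims. The commutative square built from $e_{11}\hotimes e_{11}$, $1\hotimes e_{11}$, $e_{11}\hotimes 1$ and $\phi$ only yields the tautology $H(e_{11})=H(e_{11}\hotimes 1)^{-1}H(1\hotimes e_{11})H(e_{11})$, never a two-sided inverse.

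What the paper actually does, and what you discard by forgetting the $\J$-factor in your reduction, is exploit the operator-theoretic structure of the Fr\'echet ideal $\J$: since $\J\supset\cL^1\supset\com$ \cite[5.1.3]{cot}, and the tensor product of operators defines a continuous homomorphism $\theta:\cL^1\hotimes\J\to\J$, one gets a homomorphism $\beta=\theta\circ(\phi\hotimes 1_\J):\com\hotimes\J\to\J$ going in the \emph{opposite} direction to $\alpha=1_\J\hotimes\iota:\J\to\J\hotimes\com$. The absorption map $\beta$ is the reverse arrow your ``diagram chase'' is missing, and it only exists because $\J$ is an operator ideal. One then checks, via the argument of \cite[6.1.2]{ct} (which uses Proposition \ref{prop:vw}--type inner-automorphism tricks together with diffeotopy invariance), that $G(1_L\hotimes\alpha\beta)$ and $G(1_L\hotimes\beta\alpha)$ are identity maps, so that $G(1_L\hotimes\beta)$ is the sought inverse of $F(1_L\hotimes e_{11})$. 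This is a genuinely different mechanism than the $\com\hotimes\com\cong\com$ diffeotopies of \cite{cd}, and your proposal does not supply it.
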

\begin{proof}
Let $\iota:\C\to \com$ be the inclusion; put $\alpha=1_\J\hotimes\iota$. We have to show that if $L$ is a locally convex algebra, then $G$ maps
$1_L\hotimes\alpha$ to an isomorphism. To do this one constructs a map
$\beta:\com\hotimes\J\to \com$, and shows that $G(1_L\hotimes\beta)$ is inverse to $G(1_L\hotimes\alpha)$. To
define $\beta$, proceed as follows. By \cite[5.1.3]{cot}, $\J\supset\cL^1$, and the tensor product of operators defines a map
$\theta:\cL^1\hotimes\J\to \J$. Write $\phi:\com\to\cL^1$ for the inclusion. Put $\beta=\theta\circ(\phi\hotimes 1_\J)$.
The argument of the proof of \cite[6.1.2]{ct} now shows that $G$ sends both $1_L\hotimes\alpha\beta$ and $1_L\hotimes\beta\alpha$
to identity maps.\qed
\end{proof}
\subsection{$KH$ of stable locally convex algebras.}

Let $L$ be a locally convex algebra. Restriction of functions defines a homomorphism of locally convex algebras $L[t]\to \cC^{\infty}([0,1],L)$, which sends $\Omega L\to \Omega^{\dif}L$. Thus we have a natural map
\begin{equation}\label{map:compakhkd}
KH_n(L)=\colim_pK_{-p}(\Omega^{p+n}L)\to \colim_pK_{-p}((\Omega^{\dif})^{p+n}L)=KD_n(L)
\end{equation}
\begin{them}\label{thm:compakh}\cite[6.2.1]{cot}
Let $L$ be a locally convex algebra, $\J$ a Fr\'echet ideal, and $A$ a $\C$-algebra. Then
\item{i)} The functors $KH_n(A\otimes_\C(-\hotimes\J))$ $(n\in\Z)$ and $K_m(A\otimes_\C(-\hotimes\J))$ $(m\le 0)$ are
diffeotopy invariant.
\item{ii)} $A\otimes_\C (L\hotimes\J)$ is $K_n$-regular $(n\le 0)$.
\item{iii)}The map $KH_n(L\hotimes \J)\to KD_n(L\hotimes \J)$ of \eqref{map:compakhkd}
 is an isomorphism for all $n$. Moreover we have
\[
 KH_n(L\hotimes \J)=KV_n(L\hotimes\J)=KV^{\dif}_n(L\hotimes J)\qquad (n\ge 1)
\]
\end{them}
\begin{proof} Part i) is immediate from \ref{thm:khppties}, \ref{exa:khkosher} and \ref{thm:hit3}. It follows from part
i) and Exercise \ref{exe:diforeg} that $A\otimes_\C(L\hotimes\J)$ is $K_n$-regular for all $n\le 0$, proving ii).
From part i) and excision, we get that the two vertical maps in the commutative diagram below are isomorphisms $(n\le 0)$:
\[
\xymatrix{K_n(L\hotimes\J)\ar[d]\ar[r]^{1}&K_n(L\hotimes\J)\ar[d]\\
          K_{n-1}(\Omega L\hotimes\J)\ar[r]&K_{n-1}(\Omega^{\dif}L\hotimes\J)}
\]
It follows that the map at the bottom is an isomorphism. This proves the first assertion of iii). The identity $KH_n(L\hotimes \J)=KV_n(L\hotimes\J)$ $(n\ge 1)$ follows from part i),
using Proposition \ref{prop:khk0reg} and Remark \ref{rem:vorst}. Similarly, part i) together with Proposition
\ref{prop:kdk0reg} imply that $KD_n(L\hotimes\J)=KV^{\dif}_n(L\hotimes\J)$ $(n\ge 1)$.\qed
\end{proof}
\begin{coro}\label{cor:kdk0k-1}
\[KH_n(A\otimes_\C (L\hotimes\J))=\begin{cases}K_0(A\otimes_\C(L\hotimes\J))& n\text{ even. }\\ K_{-1}(A\otimes_\C(L\hotimes\J))& n \text{ odd.}\end{cases}\]
\end{coro}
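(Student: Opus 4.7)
Set $B := A\otimes_\C(L\hotimes\J)$. The plan is to combine two ingredients: an identification of $KH_n(B)$ with $K_n(B)$ in degrees $n=0,-1$, and a two-periodicity isomorphism $KH_n(B)\cong KH_{n+2}(B)$. The first ingredient is immediate: part (ii) of Theorem \ref{thm:compakh} says that $B$ is $K_n$-regular for every $n\le 0$, so Proposition \ref{prop:khk0reg} yields $KH_0(B)=K_0(B)$ and $KH_{-1}(B)=K_{-1}(B)$.

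For two-periodicity I would route through $KD$ and invoke the Bott-type Theorem \ref{thm:kdbott}, which gives periodicity only on algebras of the shape $M\hotimes\com$. To reduce $B$ to that shape, give $A$ the fine topology, so that $A\hotimes N=A\otimes_\C N$ is a locally convex algebra for every locally convex $N$, and consider the functor $G(N):=KD_n(A\hotimes N)$. Then $G$ is $M_2$-stable because $KD$ is $M_\infty$-stable (Proposition \ref{prop:kdppties}), and the functor $F(L'):=G(L'\hotimes\J)$ is diffeotopy invariant because $KD$ is. Lemma \ref{fact:jcomstable} now applies and gives the $\com$-stability $KD_n(B)\cong KD_n(B\hotimes\com)$. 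Combined with Theorem \ref{thm:kdbott} this yields $KD_n(B)\cong KD_{n+2}(B\hotimes\com)\cong KD_{n+2}(B)$.

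To transfer the periodicity from $KD$ back to $KH$, view $B$ as $(A\hotimes L)\hotimes\J$ and apply Theorem \ref{thm:compakh}(iii) with $A\hotimes L$ in place of $L$: the resulting isomorphism $KH_n(B)\cong KD_n(B)$ holds for every $n\in\Z$. Putting the pieces together, $KH_n(B)\cong KH_0(B)=K_0(B)$ when $n$ is even, and $KH_n(B)\cong KH_{-1}(B)=K_{-1}(B)$ when $n$ is odd. The main obstacle I foresee is purely formal: one must check that passing to the fine topology on $A$ is compatible with the $\hotimes$-manipulations involved in verifying the hypotheses of Lemma \ref{fact:jcomstable} (in particular that a diffeotopy $L'\to\cC^\infty([0,1],L')$ induces one on $A\hotimes L'\hotimes\J$), and that this also makes Theorem \ref{thm:compakh}(iii) applicable in the required generality. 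Once these identifications are granted, the argument is a straightforward chaining of the cited results.
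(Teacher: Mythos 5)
Your handling of degrees $0$ and $-1$ is exactly the paper's: Theorem \ref{thm:compakh}(ii) gives $K_n$-regularity of $B$ for $n\le 0$, and Proposition \ref{prop:khk0reg} then yields $KH_0(B)=K_0(B)$ and $KH_{-1}(B)=K_{-1}(B)$. For the $2$-periodicity, however, you detour through $KD$, whereas the paper stays entirely inside $KH$: it applies Lemma \ref{fact:jcomstable} to the functor $N\mapsto KH_*(A\otimes_\C(N\hotimes\J))$ on locally convex algebras to obtain $\com$-stability, then feeds that same functor into Cuntz's Theorem \ref{thm:cusm} to get $KH_*(A\otimes_\C(\cT_0^{\sm}\hotimes\J))=0$, and finally deduces $KH_{*+2}(B)\cong KH_*(B)$ from excision along the smooth Toeplitz extension and diffeotopy invariance. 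The crucial point is that $A\otimes_\C N$ is treated there as a bare $\Q$-algebra (input to $KH$), so $A$ itself never needs to carry a topology.

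The obstacle you flag at the end is in fact a substantive gap, not a formality. Your construction $G(N):=KD_n(A\hotimes N)$ presupposes two things: that $A$ with the fine topology is a locally convex algebra in the sense of the paper (jointly continuous, submultiplicatively bounded product), and that the completed projective tensor product $A\hotimes N$ agrees with the algebraic tensor product $A\otimes_\C N$, so that $(A\hotimes L)\hotimes\J$ really \emph{is} $B$. The paper itself is careful to invoke the fine topology only after noting that the algebra in question (there $\Sigma\C$, in the paragraph preceding Proposition \ref{prop:kdppties}) is \emph{countably dimensional}, and cites \cite[2.1]{cw} for precisely that reason; for an uncountably dimensional $\C$-algebra $A$ the fine topology need not make the multiplication jointly continuous, so $A\hotimes N$ is simply not defined as a locally convex algebra and $KD_n(A\hotimes L\hotimes\J)$ does not make sense. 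Since Corollary \ref{cor:kdk0k-1} feeds into Theorem \ref{thm:laseq}, which is stated for an arbitrary $\C$-algebra $A$, you cannot restrict to countably dimensional $A$. The repair is to drop the $KD$ detour: the only thing your argument really needs is the vanishing of the $\cT_0^{\sm}$ term, and Theorem \ref{thm:cusm} gives that directly for the functor $N\mapsto KH_*(A\otimes_\C(N\hotimes\J))$, which is split exact, $\com$-stable (Lemma \ref{fact:jcomstable}), and diffeotopy invariant (Theorem \ref{thm:compakh}(i)) without ever topologizing $A$.
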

\begin{proof}
Put $B=A\otimes_\C(L\hotimes\J)$. By part ii) of Theorem \ref{thm:compakh} above and
Remark \ref{rem:vorst}, or directly by the proof of the theorem, we have that $B$ is $K_n$-regular for
all $n\le 0$. Thus $KH_n(B)=K_n(B)$ for $n\le 0$, by Propostion \ref{prop:khk0reg}.
To finish, we must show that $KH_n(B)$ is $2$-periodic. By \ref{fact:jcomstable}, $KH_*(A\otimes_\C(-\hotimes\J))$ is $\com$-stable.
Thus $KH_*(A\otimes_\C(\cT_0^{\sm}\hotimes\J))=0$, by Theorem \ref{thm:cusm}. Whence $KH_{*+1}(B)=KH_{*-1}(B)$, by excision
and diffeotopy invariance.\qed
\end{proof}
\begin{exa}
If $\J$ is a Fr\'echet operator ideal, then by  \ref{k0j}, \ref{k-1j} and Corollary \ref{cor:kdk0k-1}, we get:
\[
KH_n(\J)=\left\{\begin{array}{cc}\Z&n\text{ even.}\\ 0&n\text{ odd.} \end{array}\right.
\]
This formula is valid more generally for ``subharmonic" ideals (see \cite[6.5.1]{cot} for the definition of this term,
and \cite[7.2.1]{cot} for the statement). For example, the Schatten ideals $\cL_p$ are subharmonic for all $p>0$, but are Fr\'echet
only for $p\ge 1$.
\end{exa}

\section{$K$-theory spectra}\label{sec:spectra}

In this section we introduce spectra for Quillen's and other $K$-theories. For a quick introduction to spectra,
see \cite[10.9]{chubu}.

\subsection{Quillen's $K$-theory spectrum.}
Let $R$ be a unital ring.
Since the loopspace depends only on the connected component of the base point, applying the equivalence of Proposition \ref{prop:conseq} to $\Sigma R$ induces an equivalence
\begin{equation}\label{map:omegequi}
\Omega K(\Sigma R)\weq \Omega^2K(\Sigma ^2R)
\end{equation}
Moreover, by \ref{subsec:functissues}, this map is natural. Put
\[
{}_n\K R:=\Omega K(\Sigma^{n+1}R).
\]
The equivalence \eqref{map:omegequi} applied to $\Sigma^nR$ yields an equivalence
\[
{}_n\K R\weq \Omega{}(_{n+1}\K R).
\]
The sequence $\K R=\{{}_n\K R\}$ together with the homotopy equivalences above constitute a spectrum
(in the notation of \cite[10.9]{chubu}, {\it $\Omega$-spectrum} in that of \cite[Ch. 8]{swi}),
the $K$-theory spectrum; the equivalences are the {\it bonding maps} of the spectrum. The $n$-th (stable) homotopy group of $\K R$ is
\[
\pi_n\K R=\colim_p\pi_{n+p}({}_n\K R)=K_nR\qquad (n\in\Z).
\]
Because its negative homotopy groups are in general nonzero, we say that the spectrum $\K R$ is {\it nonconnective}.
Recall that the homotopy category of spectra $\ho Spt$ is triangulated, and, in particular, additive.
In the first part of the proposition below, we show that $\ass_1\to\ho Spt$, $R\mapsto \K R$ is an additive functor.
Thus we can extend the functor $\K$ to all (not necessarily unital) rings, by
\begin{equation}\label{ksnunital}
\K A:=\hofi(\K(\tilde{A})\to \K (\Z))
\end{equation}

\begin{propo}\label{prop:ksppties}
\item{i)} The functor $\K:\ass_1\to \ho Spt$ is additive.
\item{ii)} The functor $\K:\ass\to \ho Spt$ defined in \eqref{ksnunital} above, is $M_\infty$-stable
on unital rings.
\end{propo}
\begin{proof} It follows from \ref{prop:hikay} i) and iii).\qed
\end{proof}

If $A\triqui B$ is an ideal, we define the relative
$K$-theory spectrum by
\[
\K(B:A)=\hofi (\K(B)\to \K(B/A)).
\]

\begin{propo}\label{prop:exciexci}
Every short exact sequence of rings \eqref{abc} with $A$ $K$-excisive, gives rise to a distinguished triangle
\[
\K A\to \K B\to \K C\to \Omega^{-1}\K A
\]
\end{propo}
\begin{proof} Immediate from Theorem \ref{thm:exito}.\qed
\end{proof}

\subsection{$KV$-theory spaces.}
Let $A$ be a ring. Consider the simplicial ring
\[
\DA A:[n]\mapsto A\otimes\Z[t_0,\dots,t_n]/\langle1-(t_0+\dots+t_n)\rangle.
\]
It is useful to think of elements of $\DA_nA$ as formal polynomial functions on
the algebraic $n$-simplex $\{(x_0,\dots,x_n)\in \Z^{n+1}:\sum x_i=1\}$ with
values in $A$. Face and degeneracy maps are given by
\begin{gather}\label{formucaradege}
d_i(f)(t_0,\dots,t_{n-1})=f(t_0,\dots,t_{i-1},0,t_i,\dots,t_{n})\\
s_j(f)(t_0,\dots,t_{n+1})=f(t_0,\dots,t_{i-1},t_i+t_{i+1},\dots,t_{n+1}).\nonumber
\end{gather}
Here $f\in \DA_nA$, $0\le i\le n$, and $0\le j\le n-1$.\goodbreak
In the next proposition and below, we shall use the geometric realization of a simplicial space;
see \cite[I.3.2 (b)]{GM} for its definition. We shall also be concerned with simplicial groups; see \cite[Ch.8]{chubu} for a brief introduction to the
latter. The following proposition and the next are taken from D.W. Anderson's paper \cite{anderson}.

\begin{propo}(\cite[1.7]{anderson})
Let $A$ be a ring and $n\ge 1$. Then $KV_nA=\pi_{n-1}\GL\Delta A=\pi_n|B\GL\DA A|$.
\end{propo}
\begin{proof}
The second identity follows from the fact that if $G$ is a group,
then $\Omega B G\weq G$ \cite{bf} and the fact that, for a
simplicial connected space $X$, one has $\Omega |X|\weq |\Omega X|$. To prove
the first identity, proceed by induction on $n$. Write $\sim$ for
the polynomial homotopy relation in $\GL A$ and $\coeq$ for the
coequalizer of two maps. The case $n=1$ is
\begin{align*}
\pi_0\GL(\Delta A)=&\coeq(\GL\Delta_1A\overset{\ev_0}{\underset{\ev_1}{\rightrightarrows}}\GL A)\\
                        =&\GL A/\sim\\
                        =&\GL A/\GL(A)'_0=KV_1A.
\end{align*}
For the inductive step, proceed as follows. Consider the exact sequence of rings
\[
0\to \Omega A\to PA\to A\to 0
\]
Using that $\GL(-)'_0=\ima (\GL P(-)\to \GL(-))$ and that $P\DA=\DA P$ and $\Omega\DA=\DA\Omega$, we obtain exact sequences of simplicial groups

\begin{gather}
\xymatrix{1\ar[r]&\GL\DA\Omega A\ar[r]&\GL\DA PA\ar[r]&\GL(\DA A)'_0\ar[r]&1}\label{seq:decapi1}\\
\xymatrix{1\ar[r]&\GL(\DA A)'_0\ar[r]&\GL\DA A\ar[r]&KV_1(\DA A)\ar[r]&1}\label{seq:decapi2}
\end{gather}
Since $KV_1$ is homotopy invariant (by \ref{prop:kv1ppties}), we have $\pi_0KV_1\DA A=KV_1A$ and $\pi_nKV_1\DA A=0$ for $n>0$. It follows
from \eqref{seq:decapi2} that
\begin{equation}\label{pigl0}
\pi_n\GL(\DA A)'_0=\begin{cases} 0&n=0\\ \pi_n\GL(\DA A)& n\ge 1\end{cases}
\end{equation}
Next, observe that there is a split exact sequence
\[
1\to \GL \DA PA\to \GL \DA A[x]\to \GL\DA A\to 1
\]
Here, the surjective map and its splitting are respectively $\GL d_0$ and $\GL s_0$. One checks that the maps
\begin{gather*}
h_i:\DA_n\DA_1 A\to \DA_{n+1}A,\\ h_i(f)(t_0,\dots,t_n,x)=f(t_0,\dots,t_i+t_{i+1},\dots, t_n,(t_{i+1}+\dots +t_n)x)
\end{gather*}
$0\le i\le n$ form a simplicial homotopy between the identity and the map $\DA_n(s_0d_0)$.
Thus $\GL d_0$ is a homotopy equivalence, whence $\pi_*\GL\DA PA=0$. Putting this together with \eqref{pigl0} and using the homotopy exact sequence
of \eqref{seq:decapi1}, we get
\[
\pi_n\GL\DA \Omega A=\pi_{n+1}\GL\DA A \qquad(n\ge 0).
\]
The inductive step is immediate from this.\qed
\end{proof}
\begin{exe} Let $L$ be a locally convex algebra. Consider the
{\it geometric $n$-simplex}
\[
\A^n:=\{(x_0,\dots,x_n)\in\R^{n+1}:\sum x_i =1 \}\supset
\Delta^n:=\{x\in \A^n:x_i\ge 0 \ (0\le i\le n)\}.
\]
If $L$ is a locally convex algebra, we write
\[
\DD_n L:=\cC^\infty(\Delta^n,L).
\]
Here, $\cC^\infty(\Delta^n, -)$ denotes the locally convex
vectorspace of all those functions on $\Delta^n$ which are
restrictions of $\cC^\infty$-functions on $\A^n$. The cosimplicial
structure on $[n]\mapsto\Delta^n$ induces a simplicial one on $\DD
L$. In particular, $\DD L$ is a simplicial locally convex algebra,
and $\GL(\DD L)$ is a simplicial group.
\item{i)} Prove that $KV_n^{\dif}L=\pi_{n-1}\GL(\DD L)$ $(n\ge
1)$.
\item{ii)} Let $A$ be a Banach algebra. Consider the simplicial Banach algebra $\Delta_*^{\top}A=\cC(\Delta^*,A)$
and the simplcial group $\GL(\Delta^{\top}A)$. Prove that
$K^{\top}_nA=\pi_{n-1}\GL(\Delta^{\top}A)$ $(n\ge 1)$.
\end{exe}
\begin{propo}\label{prop:ander}(\cite[2.3]{anderson})
Let $R$ be a unital ring. Then the map $|B\GL \Delta R|\to |K\Delta R|$ is an equivalence.\qed
\end{propo}
\begin{coro}
If $A$ is a ring and $n\ge 1$, then
\[
KV_nA=\pi_n|K(\Delta \tilde{A}:\Delta A)|\qed
\]
\end{coro}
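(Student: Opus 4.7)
First, I would apply the previous Proposition to the unital rings $\tilde A$ and $\Z$, obtaining natural isomorphisms $\pi_n|K(\DA \tilde A)|\cong KV_n(\tilde A)$ and $\pi_n|K(\DA\Z)|\cong KV_n(\Z)$ for all $n\ge 1$. (That Proposition gives $KV_n R=\pi_n|B\GL\DA R|$, and Proposition \ref{prop:ander} provides $|B\GL\DA R|\weq |K\DA R|$.)

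Next, observe that the canonical split extension $0\to A\to\tilde A\to\Z\to 0$, split by $\Z\to\tilde A$, $n\mapsto n$, gives on applying $\DA$ termwise a levelwise split short exact sequence of simplicial rings
\[
0\to\DA A\to\DA\tilde A\to\DA\Z\to 0.
\]
In particular $\DA_p\tilde A/\DA_p A\cong\DA_p\Z$ for each $p$, so that by the definition of the relative $K$-theory space
\[
K(\DA_p\tilde A:\DA_p A)=\hofi\bigl(K(\DA_p\tilde A)\to K(\DA_p\Z)\bigr),
\]
and the projection admits the section $K(\DA_p\Z)\to K(\DA_p\tilde A)$ coming from the unital ring map $\DA_p\Z\to\DA_p\tilde A$.

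Third, I would argue that geometric realization converts this levelwise split fibration into a fibration of spaces. Because a section exists at every simplicial level, the realized map $|K(\DA\tilde A)|\to|K(\DA\Z)|$ admits the section $|K(\DA\Z)|\to|K(\DA\tilde A)|$, and by standard results on realization of simplicial $H$-spaces (or a direct application of Bousfield--Friedlander to a Reedy-fibrant replacement), $|K(\DA\tilde A:\DA A)|$ is a model for the homotopy fiber of $|K(\DA\tilde A)|\to|K(\DA\Z)|$. The section splits the resulting long exact sequence of homotopy groups into short exact sequences
\[
0\to\pi_n|K(\DA\tilde A:\DA A)|\to \pi_n|K(\DA\tilde A)|\to\pi_n|K(\DA\Z)|\to 0.
\]

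Finally, I would invoke split exactness of $KV_n$. For $n=1$ this is Proposition \ref{prop:kv1ppties}(ii); for $n\ge 2$ it propagates via $KV_nB=KV_1(\Omega^{n-1}B)$, since $\Omega$ preserves split exact sequences. Applied to $0\to A\to\tilde A\to\Z\to 0$, this gives $KV_n\tilde A=KV_nA\oplus KV_n\Z$, hence
\[
\pi_n|K(\DA\tilde A:\DA A)|=\ker\bigl(KV_n\tilde A\to KV_n\Z\bigr)=KV_nA,
\]
as desired. The main obstacle is the realization step, i.e., rigorously upgrading the levelwise split fibration to a fibration after geometric realization; this is made routine by the presence of a section at every simplicial level together with the $H$-group structure from Proposition \ref{prop:hikay}(ii).
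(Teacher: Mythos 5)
Your proof is correct and is essentially the deduction the paper leaves to the reader (the Corollary carries no written proof, just a \qed). You correctly chain the two preceding propositions, identify the levelwise relative $K$-space with the homotopy fiber by noting that $\GL\DA_p\tilde A\to\GL\DA_p\Z$ is onto (so $\widebar{\GL}=\GL$ at every simplicial level), pass to a fibration after realization using the split section together with the $H$-group structure from Proposition \ref{prop:hikay}, and close the argument with split exactness of $KV_n$ (propagated from $KV_1$ via $\Omega$). The one point worth phrasing a little more carefully is the realization step: what you need is that the levelwise homotopy-fiber sequence realizes to a homotopy-fiber sequence, and the sufficient condition here is that the simplicial spaces in question satisfy the $\pi_*$-Kan condition — automatic because each $K(\DA_p\tilde A)$, $K(\DA_p\Z)$ is a connected $H$-group and each level of the fiber is connected (the boundary $\pi_1K(\DA_p\tilde A)\to\pi_1K(\DA_p\Z)$ is onto by the splitting); with that noted your argument is complete.
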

\begin{rem}
The argument of the proof of Proposition \ref{prop:ander} in
\cite{anderson} applies verbatim to the $\cC^\infty$ case, showing
that if $T$ is a unital locally convex algebra, then
\[
|B\GL
\DD T|\weq |K\DD T|.
\]
It follows that if $L$ is any, not necessarily unital locally convex algebra and $\tilde{L}_\C=L\oplus \C$
is its unitalization, then
\[
KV^{\dif}_nL=\pi_n|K(\DD \tilde{L}_\C:\DD L)|
\]
The analogous formulas for the topological $K$-theory of Banach algebras are also true and can be derived in the same manner.
\end{rem}
\subsection{The homotopy $K$-theory spectrum.}
Let $R$ be a unital ring. Consider the simplicial spectrum $\K\DA R$. Put
\[
\KH(R)=|\K \Delta R|
\]
One checks that $\KH:\ass_1\to\ho Spt$ is additive. Thus $\KH$ extends to arbitrary rings by
\[
\KH(A)=\hofi(\KH\tilde{A}\to\KH\Z)=|\K (\Delta \tilde{A}:\Delta A)|
\]
\begin{rem}\label{rem:kh_berreta}
If $A$ is any, not necessarily unital ring, one can also consider the spectrum $|\K \DA A|$; the map
\begin{equation}\label{map:berrekh1}
\K\DA A=\K(\widetilde{\DA A}:\DA A)\to \K (\DA \tilde{A}:\DA A)
\end{equation}
induces
\begin{equation}\label{map:berrekh2}
|\K\DA A|\to |\KH A|.
\end{equation}
If $A$ happens to be unital, then \eqref{map:berrekh1} is an equivalence, whence the same is true of \eqref{map:berrekh2}.
Further, we shall show below that \eqref{map:berrekh2} is in fact an equivalence for all
$\Q$-algebras $A$.
\end{rem}
\begin{propo}
Let $A$ be a ring, and $n\in \Z$. Then $KH_n(A)=\pi_n\KH(A)$.
\end{propo}
\begin{proof}
It is immediate from the definition of the spectrum $\KH A$ given
above that
\[\pi_*\KH(A)=\ker(\pi_*\KH\tilde{A}\to\pi_*\KH\Z)\]
Since a similar formula holds for $KH_*$, it suffices to prove the
proposition for unital rings. Let $R$ be a unital ring. By
definition, the spectrum $|\KH(R)|$ is the spectrification of the
pre-spectrum whose $p$-th space is $|\Omega K\Delta
\Sigma^{p+1}R|$. Thus
\begin{align*}
\pi_n\KH(R)=&\colim_p\pi_{n+p}|\Omega K\Delta \Sigma^{p+1}
R|=\colim_p\pi_{n+p}\Omega| K\Delta \Sigma^{p+1} R|\\
=&\colim_p\pi_{n+p+1}|K\Delta \Sigma^{p+1}
R|=\colim_pKV_{n+p}\Sigma^pR=KH_nR.\ \ \qed
\end{align*}
\end{proof}
\begin{exe}\label{exer:kd}
Let $L$ be a locally convex algebra. Put
\[
\KD L=|\K(\DD \tilde{L}_\C: \DD L)|.
\]
\item{i)} Show that $\pi_n\KD L=KD_n L$ ($n\in\Z$).
\item{ii)} Construct a natural map
\[\K\DD L\to \KD L \]
and show it is an equivalence for unital $L$.
\end{exe}
\section{Primary and secondary Chern characters}\label{sec:chern}

In this section, and for the rest of the paper, all rings considered will be $\Q$-algebras.

\subsection{Cyclic homology.}

The different
variants of cyclic homology of an algebra $A$ are related by an exact sequence, Connes' $SBI$ sequence
\begin{equation}\label{seq:sbi}
\xymatrix{HP_{n+1}A\ar[r]^S& HC_{n-1}A\ar[r]^B& HN_nA\ar[r]^I&HP_nA\ar[r]^S& HC_{n-2}A}
\end{equation}
Here $HC$, $HN$ and $HP$ are respectively cyclic, negative cyclic and periodic cyclic homology.
The sequence \eqref{seq:sbi} comes from an exact sequence of complexes of $\Q$-vectorspaces. The complex for cyclic homology
is Connes' complex $C^\lambda A$, whose definition we shall recall presently; see \cite[5.1]{lod} for the negative
cyclic and periodic cyclic complexes.
The complex $C^\lambda A$ is a nonnegatively
graded chain complex, given in dimension $n$ by the coinvariants
\begin{equation}\label{clambda}
C^\lambda_nA:=(A^{\otimes n+1})_{\Z /(n+1)\Z }
\end{equation}
of the tensor power --taken over $\Z$, or, what is the same, over $\Q$-- under the action of $\Z/(n+1)\Z$
defined by the signed cyclic permutation
\[
\lambda(a_0\otimes\dots\otimes a_n)=(-1)^na_n\otimes a_0\otimes\dots\otimes a_{n-1}.
\]
The boundary map $b:C_n^\lambda A\to C_{n-1}^\lambda A$ is induced by
\begin{multline*}
b:A^{\otimes n+1}\to A^{\otimes n},\quad b(a_0\otimes\dots\otimes a_n)=\sum_{i=0}^{n-1}(-1)^ia_0\otimes\dots\otimes a_ia_{i+1}
\otimes\dots\otimes a_n\\+(-1)^na_na_0\otimes\dots\otimes a_{n-1}
\end{multline*}
\begin{exa}\label{exa:hc0}
The map $C^\lambda_1(A)\to C_0^\lambda(A)$ sends the class of $a\otimes b$ to $[a,b]:=ab-ba$. Hence
\[
HC_0A=A/[A,A].
\]
\end{exa}

By definition,  $HC_nA=0$ if $n<0$.
Also by definition, $HP$ is periodic of period $2$.

The following theorem subsumes the main properties of $HP$.

\begin{them}\label{thm:hp_ppties}
\item{i)} (Goodwillie, \cite{goo}; see also \cite{cq2}) The functor $HP_*:\Q-\ass\to \ab$ is homotopy
invariant and nilinvariant.
\item{ii)}(Cuntz-Quillen, \cite{cq}) $HP$ satisfies excision for $\Q$-algebras; to each exact sequence
\eqref{abc} of  $\Q$-algebras, there corresponds a $6$-term exact sequence
\begin{equation}\label{seq:exciper}
\xymatrix{HP_0A\ar[r]&HP_0B\ar[r]&HP_0C\ar[d]\\ HP_1C\ar[u]&HP_1B\ar[l]&HP_1A\ar[l]}
\end{equation}
\smartqed\qed
\end{them}
\begin{rem}
The sequence \eqref{seq:sbi} comes from an exact sequence of complexes, and thus, via the Dold-Kan correspondence,
it corresponds to a homotopy fibration of spectra
\begin{equation}\label{seq:spt_sbi}
\Omega^{-1}\HC A\to\HN A\to\HP A
\end{equation}
Similarly, the excision sequence \eqref{seq:exciper} comes from a cofibration sequence in the category of
pro-supercomplexes \cite{cv}; applying the Dold-Kan functor and taking homotopy limits yields a homotopy
fibration of Bott-periodic spectra
\begin{equation}\label{seq:sptexciper}
\HP A\to\HP B\to\HP C
\end{equation}
The sequence \eqref{seq:exciper} is recovered from \eqref{seq:sptexciper} after taking homotopy groups.
\end{rem}
\subsection{Primary Chern character and infinitesimal $K$-theory.}
The main or primary character is a map going from $K$-theory to negative cyclic homology
\[
c_n:K_nA\to HN_nA\qquad (n\in\Z).
\]
(See \cite[Ch. 8, Ch. 11]{lod} for its definition).
This group homomorphism is induced by a map of spectra
\[
\K A\to \HN A
\]
Put $\K^{\inf}A:=\hofi (\K A\to \HN A)$
for its fiber; we call $K_*^{\inf}A$ the {\it infinitesimal $K$-theory} of $A$. Thus, by definition,
\begin{equation}\label{basicfib}
\K^{\inf}A\to \K A\to \HN A
\end{equation}
is a homotopy fibration. The main properties of $K^{\inf}$ are subsumed in the following theorem.
\begin{them}\label{thm:ppties_kinf}
\item{i)} (Goodwillie, \cite{goo1}) The functor $K_n^{\inf}:\Q-\ass\to \ab$ is nilinvariant $(n\in\Z)$.
\item{ii)}(\cite{kabi}) $K^{\inf}$ satisfies excision for $\Q$-algebras. Thus to every exact sequence
of $\Q$-algebras \eqref{abc} there corresponds a triangle
\[
\K^{\inf}A\to \K^{\inf}B\to \K^{\inf}C\to\Omega^{-1}\K^{\inf}A
\]
in $\ho(Spt)$ and therefore an exact sequence
\[
K^{\inf}_{n+1}C\to K^{\inf}_nA\to K^{\inf}_nB\to K^{\inf}_nC\to K^{\inf}_{n-1}A\ \ \qed
\]
\end{them}

\subsection{Secondary Chern characters.}

Starting with the fibration sequence \eqref{basicfib}, one builds up a commutative diagram with homotopy
fibration rows and columns
\begin{equation}\label{fundechar}
\xymatrix{\K^{\inf,\nil}A\ar[d]\ar[r]&\K^{\inf}A\ar[r]\ar[d]&|\K^{\inf}\DA A|\ar[d]\\
           \K^{\nil}A\ar[r]\ar[d] & \K A\ar[d]_{c}\ar[r]&|\K\DA A|\ar[d]_{c\DA}\\
           \HN^{\nil}A\ar[r]&\HN A\ar[r]&|\HN \DA A|.}
\end{equation}
The middle column is \eqref{basicfib}; that on the right is \eqref{basicfib} applied to $\DA A$; the horizontal map of
homotopy fibrations from middle to right is induced by the inclusion $A\to \DA A$, and its fiber is the column on the left.

\begin{lem}\label{lem:vanisimpli}(\cite[2.1.1]{cot})
Let $A$ be a simplicial algebra; write $\pi_*A$ for its homotopy groups. Assume
$\pi_nA=0$ for all $n$. Then $\HC A\weq 0$ and $\HN A\weq\HP A$.\qed
\end{lem}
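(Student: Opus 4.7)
The plan is to reduce the whole lemma to the single statement $\HC A \weq 0$, since then the SBI cofibration sequence \eqref{seq:spt_sbi}, applied to $A$, immediately gives $\HN A \weq \HP A$. So the only real content is the vanishing of $\HC A$.

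To establish this, I would compute $\HC A$ via the bicomplex obtained by applying Connes' complex \eqref{clambda} levelwise to the simplicial algebra $A$, yielding a first-quadrant bicomplex whose $(p,n)$-entry is $C^\lambda_n(A_p)$, with one differential the simplicial boundary of $A$ and the other the Connes boundary $b$. The hypothesis $\pi_\ast A = 0$ means, by Dold--Kan, that the normalized chain complex $NA$ is acyclic as a chain complex of $\Q$-vector spaces. Since $\Q$ is a field, every $\Q$-module is flat, so Künneth (equivalently, iterated Eilenberg--Zilber) forces $N(A^{\otimes (n+1)})$ to be quasi-isomorphic to $NA^{\otimes (n+1)}$ and hence acyclic for every $n \geq 0$; equivalently, the simplicial $\Q$-vector space $[p] \mapsto A_p^{\otimes(n+1)}$ is weakly contractible.

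Next I would take coinvariants under $\Z/(n+1)\Z$ levelwise. Because this group is finite and we are in characteristic zero, the averaging idempotent splits invariants off from coinvariants, so $(-)_{\Z/(n+1)\Z}$ is an exact functor on $\Q$-vector spaces. Applied to the weakly contractible simplicial $\Q$-vector space $A^{\otimes(n+1)}$, it yields a weakly contractible simplicial $\Q$-vector space $C^\lambda_n A$. Thus every column of the bicomplex computing $\HC A$ is acyclic, and as the bicomplex lies in the first quadrant there is no convergence issue: a standard filtration spectral sequence argument shows the total complex is acyclic, i.e.\ $\HC A \weq 0$.

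The only potential obstacle is bookkeeping about which model for cyclic homology of a simplicial algebra one uses and the comparison of these models (bisimplicial realization vs.\ totalization of the bicomplex), but since all standard models agree up to quasi-isomorphism and every step above -- Künneth, Eilenberg--Zilber, exactness of finite-group coinvariants -- is formal over $\Q$, the argument reduces to the two clean inputs: flatness over a field and characteristic zero.
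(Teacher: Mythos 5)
Your proof is correct, and it supplies a detail the present text leaves implicit: the lemma here is stated with only a citation to \cite[2.1.1]{cot} and no argument given, so the comparison is not to a displayed proof but to the cited source. Your reduction is the right one: once $\HC A\weq 0$ is established, the fibration \eqref{seq:spt_sbi} immediately yields $\HN A\weq\HP A$, so the whole content is the acyclicity of the cyclic complex of the simplicial algebra $A$. The chain of reductions -- Dold--Kan converting $\pi_*A=0$ to acyclicity of $NA$; Eilenberg--Zilber plus K\"unneth over the field $\Q$ to show each $[p]\mapsto A_p^{\otimes(n+1)}$ is weakly contractible; exactness of $(-)_{\Z/(n+1)\Z}$ in characteristic zero to pass to $C^\lambda_n A$; and the first-quadrant spectral-sequence argument to conclude that the total complex is acyclic -- is exactly the standard route, and each step is used where it is genuinely needed (flatness over a field for the K\"unneth step, characteristic zero for the coinvariants). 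The one place worth stating slightly more carefully is the smallest case $n=0$: there the column is just the simplicial vector space $A$ itself (trivial cyclic group acting), so its acyclicity is directly the hypothesis -- no K\"unneth or averaging is needed -- but this is consistent with your argument rather than an exception to it. Your closing caveat about comparing models of $\HC$ for a simplicial algebra is harmless: all totalizations in sight agree up to quasi-isomorphism, and the bicomplex model you choose is the one to which the column argument applies most transparently.
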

\begin{propo}\label{prop:identiboto}
Let $A$ be a $\Q$-algebra. Then there is a weak equivalence of fibration sequences
\[
\xymatrix{\HN^{\nil}A\ar[r]\ar[d]^\wr &\HN A\ar[r]\ar[d]^\wr &\HN \DA A\ar[d]^\wr \\ \Omega^{-1}\HC A\ar[r]&\HN A\ar[r]& \HP A}
\]
\end{propo}
\begin{proof}
By Lemma \ref{lem:vanisimpli} and Theorem \ref{thm:hp_ppties}, we have equivalences
\[
\xymatrix{\HN \DA A\ar[r]^{\sim}& \HP \DA A & \HP A\ar[l]_\sim}
\]
The proposition is immediate from this.\qed
\end{proof}
\begin{propo}\label{prop:berrebuena}
If $A$ is a $\Q$-algebra, then the natural map $|\K\DA A|\to \KH A$ of \eqref{map:berrekh2} above
is an equivalence.
\end{propo}
\begin{proof}
We already know that the map is an equivalence for unital algebras. Thus since $\KH$ is excisive, it suffices
to show that $\K\DA(-)$ is excisive. Using Proposition \ref{prop:identiboto} and diagram \eqref{fundechar},
we obtain a homotopy fibration
\[
\K^{\inf}\DA A\to \K\DA A\to \HP A
\]
Note $\HP$ is excisive by Cuntz-Quillen's theorem \ref{thm:hp_ppties} ii). Moreover, $\K^{\inf}\DA(-)$
is also excisive, because $K^{\inf}$ is excisive (\ref{thm:ppties_kinf} ii)), and because $\DA(-)$ preserves exact sequences and $|-|$ preserves
fibration sequences. It follows that $\K\DA(-)$ is excisive; this completes the proof.\qed
\end{proof}

In view of Propositions \ref{prop:identiboto} and \ref{prop:berrebuena}, we may replace diagram \eqref{fundechar}
by a homotopy equivalent diagram
\begin{equation}\label{fundechar2}
\xymatrix{\K^{\inf,\nil}A\ar[d]\ar[r]&\K^{\inf}A\ar[r]\ar[d]&|\K^{\inf}\DA A|\ar[d]\\
           \K^{\nil}A\ar[r]\ar[d]_\nu & \K A\ar[d]_{c}\ar[r]&\KH A\ar[d]_{ch}\\
           \Omega^{-1}\HC A\ar[r]&\HN A\ar[r]&\HP A.}
\end{equation}
The induced maps $\nu_*:K^{\nil}A\to HC_{*-1}A$ and $ch_*:KH_*A\to HP_*A$ are the {\it secondary} and the
{\it homotopy} Chern characters. By definition, they fit together with the primary character $c_*$ into
a commutative diagram with exact rows
\begin{equation}\label{htpyfundechar}
\xymatrix{ KH_{n+1}A\ar[r]\ar[d]_{ch_{n+1}}& K^{\nil}_nA\ar[r]\ar[d]_{\nu_n}&K_n A\ar[r]\ar[d]_{c_n}& KH_nA
\ar[d]_{ch_n}\ar[r]&\tau K^{\nil}_{n-1}A\ar[d]_{\nu_{n-1}}\\
               HP_{n+1}A\ar[r]_S&HC_{n-1}A\ar[r]_B&HN_nA\ar[r]_I&HP_nA\ar[r]_S&HC_{n-2}A.}
\end{equation}
\goodbreak
\begin{rem}
The construction of secondary characters given above goes back to Weibel's paper \cite{wenil}, where a
a diagram similar to \eqref{htpyfundechar}, involving Karoubi-Villamayor $K$-theory $KV$ instead of $KH$
(which had not yet been invented by Weibel), appeared (see also \cite{karmult}).
For $K_0$-regular algebras and $n\ge 1$, the latter diagram is equivalent to \eqref{htpyfundechar}.
\end{rem}
Recall that, according to the notation of Section \ref{sec:polikv}, an algebra is $K^{\inf}_n$-regular
if $K^{\inf}_nA\to K^{\inf}_n\DA_p A$ is an isomorphism for all $p\ge 0$. We say that $A$ is
{\it $K^{\inf}$-regular} if it is $K_n^{\inf}$-regular for all $n$.

\begin{propo}\label{prop:kinfreg}
Let $A$ be a $\Q$-algebra. If $A$ is $K^{\inf}$-regular, then the secondary character
$\nu_*:K^{\nil}_*A\to HC_{*-1}A$ is an isomorphism.
\end{propo}
\begin{proof} The hypothesis implies that the map $\K^{\inf} A\to \K^{\inf}\DA_n A$ is a weak
equivalence $(n\ge 0)$.
Thus, viewing $\K^{\inf} A$ as a constant simplicial spectrum and taking realizations, we obtain an equivalence
$\K^{\inf} A\weq |\K^{\inf}\DA A|$. Hence $\K^{\inf,\nil}A\weq 0$ and therefore $\nu$ is an equivalence.\qed
\end{proof}

\begin{exa}
The notion of $K^{\inf}$-regularity of $\Q$-algebras was introduced in \cite[\S3]{cot}, where some examples are
given and some basic properties
are proved; we recall some of them. First of all, for $n\le -1$, $K_n^{\inf}$-regularity is the same
as $K_n$-regularity. A $K_0^{\inf}$-regular algebra is $K_0$-regular, but not conversely. If $R$ is
unital and $K^{\inf}_1$-regular, then the two sided ideal $<[R,R]>$ generated by the additive
commutators $[r,s]=rs-sr$ is the whole ring $R$. In particular, no nonzero unital commutative ring is
$K_1^{\inf}$-regular. Both infinite sum and nilpotent algebras are $K^{\inf}$-regular.
If \eqref{abc} is an exact sequence of $\Q$-algebras such that any two of $A$, $B$, $C$ are
$K^{\inf}$-regular, then so is the third. \goodbreak

We shall see in \ref{thm:laseq} that any stable locally convex algebra is $K^{\inf}$-regular.
\end{exa}
\subsection{Application to $KD$.}
\begin{propo} Let $L$ be a locally convex algebra. Then the natural map $\K \DD L\to \KD L$ of
\ref{exer:kd} ii) is an equivalence.
\end{propo}
\begin{proof} By Exercise \ref{exer:kd} ii), the proposition is true for unital $L$. Thus it suffices to show
that $\K\DD(-)$ satisfies excision for those exact sequences \eqref{seq:lmn} which admit a continuous linear
splitting. Applying the sequence \eqref{basicfib} to $\DD L$ and taking realizations yields a fibration sequence
\[
|\K^{\inf}\DD L|\to |\K \DD L|\to |\HN\DD L|
\]
One checks that $\pi_*\DD L=0$ (see \cite[4.1.1]{cot}). Hence the map $I:\HN\DD L\to \HP\DD L$ is an equivalence, by Lemma
\ref{lem:vanisimpli}. Now proceed as in the proof of Proposition \ref{prop:berrebuena}, taking into account
that $\DD(-)$ preserves exact sequences with continuous linear splitting.\qed
\end{proof}
\begin{coro}\label{cor:regdif}
Assume that the map $K_nL\to K_n\DD_pL$ is an isomorphism for all $n\in \Z$ and all $p\ge 0$.
Then $\K L\to \KD L$ is an equivalence.
\end{coro}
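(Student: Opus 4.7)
The plan is short: combine the hypothesis with the preceding proposition $\K\DD L \weq \KD L$ via a levelwise equivalence of simplicial spectra, then invoke the standard fact that geometric realization preserves levelwise weak equivalences.

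First, I would translate the hypothesis into spectrum language. Since the stable homotopy groups of $\K L$ are precisely the groups $K_n L$ for all $n\in\Z$, the assumed isomorphism $K_n L \iso K_n \DD_p L$ for all $n$ and all $p\ge 0$ says exactly that the map of (nonconnective) spectra
\[
\K L \longrightarrow \K(\DD_p L)
\]
induced by the inclusion of constants $L \hookrightarrow \cC^\infty(\Delta^p,L) = \DD_p L$ is a weak equivalence for every $p\ge 0$.

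Second, viewing $\K L$ as the constant simplicial spectrum $[p]\mapsto \K L$, the resulting map of simplicial spectra
\[
\mathrm{const}\,\K L \longrightarrow \K(\DD L)
\]
is therefore a degreewise weak equivalence. Geometric realization preserves degreewise weak equivalences of simplicial spectra (a standard fact already used implicitly in Section \ref{sec:spectra}, for instance in the proofs of Propositions \ref{prop:berrebuena} and the one preceding this corollary), and $|\mathrm{const}\,\K L|\simeq \K L$. Hence one obtains a weak equivalence
\[
\K L \weq |\K \DD L| \;=\; \K \DD L.
\]

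Finally, composing with the weak equivalence $\K\DD L \weq \KD L$ from the immediately preceding proposition yields the desired $\K L \weq \KD L$. There is no real obstacle: the only nontrivial ingredient is the compatibility of realization with levelwise equivalences of simplicial spectra, which the paper has already been using freely, and the hypothesis was tailored precisely to make the map $\K L \to \K \DD_p L$ an equivalence in each degree.
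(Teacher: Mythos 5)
Your proof is correct and follows the same route as the paper: the paper's proof reads "Analogous to the first part of the proof of Proposition \ref{prop:kinfreg}," which is precisely the argument you spelled out — interpret the hypothesis as saying $\K L \to \K\DD_p L$ is a levelwise equivalence, realize to get $\K L \weq |\K\DD L|$, and compose with the preceding proposition's equivalence $\K\DD L \weq \KD L$.
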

\begin{proof}
Analogous to the first part of the proof of Proposition \ref{prop:kinfreg}.\qed
\end{proof}
\section{Comparison between algebraic and topological $K$-theory III}\label{sec:karconfre}
\subsection{Stable Fr\'echet algebras.}

The following is the general version of theorem \ref{thm:karconbau}, also due to Wodzicki.
\begin{them}\label{thm:karconfre}(\cite[Thm. 2]{wod},\cite[8.3.3, 8.3.4]{cot})
Let $L$ be an $m$-Fr\'echet algebra with uniformly bounded left or right approximate unit.
Then there is a natural isomorphism:
\[ K_n(L \hotimes \cK) \stackrel{\sim}{\to} KD_n(L \hotimes \cK), \quad \forall n \in \Z.\]
\end{them}
\begin{proof} Write $\ca$ for the full subcategory of those locally convex algebras which are $m$-Fr\'echet
algebras with left ubau. In view of Corollary \ref{cor:regdif}, it suffices to show that for all $n\in\Z$ and $p\ge 0$,
the map
\begin{equation}\label{toprove}
K_n(L\hotimes \cK)\to K_n(\DD_p L\hotimes\cK)
\end{equation}
is an isomorphism for each $L\in\ca$. Note that, since $\DD_p\C$ is a unital $m$-Fr\'echet algebra and its unit is uniformly
bounded, the functor $\DD_p(-)=-\hotimes\DD_p\C$ maps $\ca$ into itself. Since $L\to \DD_pL$ is a diffeotopy
equivalence,
this means that to prove \eqref{toprove} is to prove that $K_n(-\hotimes\cK):\ca\to \ab$ is diffeotopy
invariant. Applying the same argument as in the proof of Theorem \ref{thm:karconbau}, we get that the natural
map
\[
K_*(L\hotimes\cK)\to K_*((L\hotimes\cK)[0,1])
\]
is an isomorphism. It follows that $K_*(-\hotimes\cK)$ is invariant under continous homotopies, and thus also under diffeotopies.
\end{proof}
\begin{exe} Prove that if $L$ is as in Theorem \ref{thm:karconfre} and $M=L\hotimes\cK$, then
$KD_*(M(0,1))=KD_{*+1}M$.
\end{exe}
\begin{exe}\label{exe:summ3}
Prove that the map $K_n(L \hotimes \cK) {\to} KD_n(L \hotimes \cK)$ is an isomorphism for every unital
Fr\'echet algebra $L$, even if the unit of $L$ is not uniformly bounded. (Hint: use Exercise \ref{exe:summ2}).
\end{exe}
\begin{rem}\label{rem:ncp}
N.C. Phillips has defined a $K^{\top}$ for $m$-Fr\'echet algebras (\cite{ncp}) which extends that of Banach algebras
discussed in Section \ref{sec:topk} above. We shall see presently that, for $L$ as in Theorem \ref{thm:karconfre},
\[
K_*^{\top}(L\hotimes \cK)=KD_*(L\hotimes\cK)=K_*(L\hotimes\cK).
\]
Phillips' theory is Bott periodic and satisfies $K^{\top}_0(M)=K_0(M\hotimes\com)$ and
$K_1^{\top}(M)=K_0((M\hotimes\com)(0,1))$ for every Fr\'echet algebra $M$. On the other hand,
for $L$ as in the theorem, we have $KD_0(L\hotimes \cK)=K_0(L\hotimes\cK)$ and
$KD_1(L\hotimes \cK)=K_0((L\hotimes\cK)(0,1))$. But by \ref{fact:jcomstable},
$K_0(M\hotimes\cK)=K_0(M\hotimes\cK\hotimes\com)$ for every locally convex algebra $M$. This proves
that $KD_n(L\hotimes\cK)=K^{\top}_n(L\hotimes\cK)$ for $n=0,1$; by Bott periodicity, we get the equality
for all $n$.
\end{rem}

\subsection{Stable locally convex algebras: the comparison sequence.}

\begin{them}\label{thm:laseq}(see \cite[6.3.1]{cot})
Let $A$ be a $\C$-algebra, $L$ be a locally convex algebra, and $\J$ a Fr\'echet operator ideal. Then
\item{i)} $A\otimes_\C(L\hotimes\J)$ is $K^{\inf}$-regular.
\item{ii)} For each $n\in\Z$, there is a $6$-term exact
sequence
\begin{equation}\label{seq:seqtop}
\xymatrix{ K_{-1}(A\otimes_\C (L\hotimes \J))\ar[r]&HC_{2n-1}(A\otimes_\C(L\hotimes \J)\ar[r]&K_{2n}(A\otimes_\C(L\hotimes \J))\ar[d] \\
K_{2n-1}(A\otimes_\C(L \hotimes \J)) \ar[u]& HC_{2n-2}(A\otimes_\C(L \hotimes \J)) \ar[l] & K_0(A\otimes_\C(L\hotimes \J)). \ar[l]}
\end{equation}
\end{them}
\begin{proof} According to Theorem \ref{thm:ppties_kinf}, $K^{\inf}$ is nilinvariant and satisfies
excision. Hence, by Theorem \ref{thm:hit3}, $L\mapsto K_*^{\inf}(A\otimes_\C(L\hotimes\J))$ is diffeotopy invariant, whence it is invariant
under polynomial homotopies. This proves (i). Put $B=A\otimes_\C(L\hotimes \J)$. By (i) and \ref{prop:kinfreg},
$\nu_*:K^{\nil}_*B\to HC_{*-1}B$ is an isomorphism.
Hence from \eqref{htpyfundechar} we get a long exact sequence
\begin{equation}\label{seq:seqtop2}
\xymatrix{KH_{m+1}B\ar[r]& HC_{m-1}B\ar[r]&K_mB\ar[r]& KH_mB
\ar[r]^{S ch_m}&HC_{m-2}B}
\end{equation}
By Corollary \ref{cor:kdk0k-1}, $KH_{2n}B=K_0B$ and $KH_{2n-1}B=K_{-1}B$; the sequence of the theorem
follows from this, using the sequence \eqref{seq:seqtop2}.\qed
\end{proof}
\begin{coro}\label{cor:seqtop} For each $n\in\Z$, there is a $6$-term exact sequence
\begin{equation}\label{seq:seqtop}
\xymatrix{ KD_{1}(L\hotimes \J)\ar[r]&HC_{2n-1}(L\hotimes \J)\ar[r]&K_{2n}(L\hotimes \J)\ar[d] \\
K_{2n-1}(L \hotimes \J) \ar[u]& HC_{2n-2}(L \hotimes \J) \ar[l] & KD_0(L\hotimes \J). \ar[l]}
\end{equation}
\end{coro}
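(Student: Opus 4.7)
The plan is to deduce this immediately from Theorem \ref{thm:laseq} by specializing to $A = \C$ and then rewriting the $K_0$ and $K_{-1}$ terms appearing in \eqref{seq:seqtop} as $KD_0$ and $KD_1$ respectively. Setting $A=\C$ in Theorem \ref{thm:laseq} (ii) already produces the desired six-term exact sequence except that the corner terms read $K_0(L\hotimes\J)$ and $K_{-1}(L\hotimes\J)$ instead of $KD_0(L\hotimes\J)$ and $KD_1(L\hotimes\J)$. So the only thing to check is that these two pairs of groups agree naturally.

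For this I would combine two results already established in the paper. First, Theorem \ref{thm:compakh}(iii) gives a natural isomorphism $KH_n(L\hotimes\J)\cong KD_n(L\hotimes\J)$ for every $n\in\Z$. Second, Corollary \ref{cor:kdk0k-1} (applied with the algebra $A$ there taken to be $\C$, so that $A\otimes_\C(L\hotimes\J)=L\hotimes\J$) identifies $KH_{2m}(L\hotimes\J)=K_0(L\hotimes\J)$ and $KH_{2m-1}(L\hotimes\J)=K_{-1}(L\hotimes\J)$ for all $m$. Chaining these two identifications together yields
\[
KD_0(L\hotimes\J)=K_0(L\hotimes\J),\qquad KD_1(L\hotimes\J)=K_{-1}(L\hotimes\J),
\]
which are exactly the substitutions required to pass from the sequence of Theorem \ref{thm:laseq} (with $A=\C$) to the sequence stated in the corollary.

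There is essentially no hard step: the argument is a direct citation of two previous results. The only mild subtlety to verify is that the replacement of $K_0$ and $K_{-1}$ by $KD_0$ and $KD_1$ respects the arrows in the six-term sequence, i.e.\ that the maps into and out of the corner terms in \eqref{seq:seqtop} of Theorem \ref{thm:laseq} agree, under the identifications above, with the maps in the statement of the corollary. This is immediate from the naturality of the comparison map $KH_*\to KD_*$ of Theorem \ref{thm:compakh}(iii) and of the isomorphisms of Corollary \ref{cor:kdk0k-1}, both of which are induced by maps of spectra constructed functorially in the ring. Hence the corollary follows at once.
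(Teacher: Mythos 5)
Your proposal is correct and follows essentially the same route as the paper: specialize Theorem \ref{thm:laseq} to $A=\C$, then use Theorem \ref{thm:compakh}(iii) together with Corollary \ref{cor:kdk0k-1} to rewrite the corner terms $K_0$ and $K_{-1}$ as $KD_0$ and $KD_1$. The remark about the arrows being compatible is reasonable and implicit in the paper's (more terse) argument, since all the identifications come from the same chain of isomorphisms used already in the proof of Theorem \ref{thm:laseq}.
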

\begin{proof} By Theorem \ref{thm:compakh} iii), $KD_*(L\hotimes\J)=KH_*(L\hotimes\J)$. Now use
Corollary \ref{cor:kdk0k-1}.\qed
\end{proof}

\begin{exa}
We saw in Theorem \ref{thm:karconfre} that the comparison map $K_*(L\hotimes\cK)\to KD_*(L\hotimes\cK)$
is an isomorphism whenever $L$ is an $m$-Fr\'echet algebra with left ubau. Thus
\begin{equation}\label{hcvanifre}
HC_*(L\hotimes\cK)=0
\end{equation}
by Corollary \ref{cor:seqtop}. It is also possible to prove \eqref{hcvanifre} directly and deduce
Theorem \ref{thm:karconfre} from the corollary above; see \cite[8.3.3]{cot}.
\end{exa}
\begin{exa}
If we set $L=\C$ in Theorem \ref{thm:laseq} above, we obtain an exact sequence
\begin{equation}\label{seq:seqalg}
\xymatrix{ K_{-1}(A\otimes_\C  \J)\ar[r]&HC_{2n-1}(A\otimes_\C \J)\ar[r]&K_{2n}(A\otimes_\C \J)\ar[d] \\
K_{2n-1}(A\otimes_\C \J) \ar[u]& HC_{2n-2}(A\otimes_\C \J) \ar[l] & K_0(A\otimes_\C \J). \ar[l]}
\end{equation}
Further specializing to $A=\C$ and using \ref{k0j} and \ref{k-1j} yields
\[
0\to HC_{2n-1}\I\to K_{2n}\I\to \Z\overset{\alpha_n}\to HC_{2n-2}\I\to K_{2n-1}\I\to 0.
\]
Here we have written $\alpha_n$ for the composite of $S\circ ch_{2n}$ with the isomorphism $\Z\cong K_0\J$.
If for example $\J\subset\cL_p$ ($p\ge 1$) then $\alpha_n$ is injective for $n\ge (p+1)/2$, by a result of Connes and Karoubi \cite[4.13]{ck}
(see also \cite[7.2.1]{cot}). Setting $p=1$ we obtain, for each $n\ge 1$, an isomorphism
\[
K_{2n}\cL_1=HC_{2n-1}\cL_1
\]
and an exact sequence
\[
0\to \Z\overset{\alpha_n}\to HC_{2n-2}\cL_1\to K_{2n-1}\cL_1\to 0.
\]
Note that since $HC_{2n-2}\cL_1$ is a $\Q$-vectorspace by definition, the sequence above implies that $K_{2n-1}\cL_1$
is isomorphic to the sum of a copy of $\Q/\Z$ plus a $\Q$-vectorspace.
\end{exa}
\begin{rem} The exact sequence \eqref{seq:seqalg} is valid more generally for subharmonic ideals (see \cite[6.5.1]{cot} for the definition of this term,
and \cite[7.1.1]{cot} for the statement). In particular, \eqref{seq:seqalg} is valid for
all the Schatten ideals $\cL_p$, $p>0$. In \cite[7.1.1]{cot} there is also a variant of \eqref{seq:seqalg}
involving relative $K$-theory and relative cyclic homology; the particular case of the latter when $A$
is $K$-excisive is due to Wodzicki (\cite[Theorem 5]{wod}).
\end{rem}

\bibliographystyle{plain}

\printindex
\end{document}